\theoremstyle{plain}
\newtheorem{theorem}{Theorem}[section]
\newtheorem{proposition}[theorem]{Proposition}
\newtheorem{corollary}[theorem]{Corollary}
\newtheorem{lemma}[theorem]{Lemma}
\theoremstyle{definition}
\newtheorem{definition}[theorem]{Definition}
\theoremstyle{remark}
\newtheorem{example}[theorem]{Example}
\newtheorem{remark}[theorem]{Remark}
\DeclareMathOperator{\interior}{int}
\DeclareMathOperator{\Div}{div}
\renewcommand{\L}{\mathcal{L}}
\newcommand{\U}{\mathcal{U}}
\newcommand{\R}{\mathbb{R}}
\newcommand{\T}{\mathbb{T}}
\newcommand{\tB}{{\tilde{B}}}
\newcommand{\tmu}{{\tilde{\mu}}}
\begin{document}

\title{Existence of global symmetries of divergence-free fields with first integrals}

\date{March 28, 2023}

\author{David Perrella}
\email[]{david.perrella@uwa.edu.au}
\affiliation{The University of Western Australia, 35 Stirling Highway, Crawley WA 6009, Australia}

\author{Nathan Duignan}
\affiliation{School of Mathematics and Statistics, University of Sydney, NSW 2050,
Australia}

\author{David Pfefferlé}
\affiliation{The University of Western Australia, 35 Stirling Highway, Crawley WA 6009, Australia}

\begin{abstract}
The relationship between symmetry fields and first integrals of divergence-free vector fields is explored in three dimensions in light of its relevance to plasma physics and magnetic confinement fusion. A Noether-type Theorem is known: for each such symmetry, there corresponds a first integral. The extent to which the converse is true is investigated. In doing so, a reformulation of this Noether-type Theorem is found for which the converse holds on what is called the toroidal region. Some consequences of the methods presented are quick proofs of the existence of flux coordinates for magnetic fields in high generality; without needing to assume a symmetry such as in the cases of magneto-hydrostatics (MHS) or quasi-symmetry.
\end{abstract}

\maketitle



\section{Introduction}

There is a growing interest in divergence-free vector fields $B$ that admit either a symmetry or a first integral. A symmetry $u$ is a divergence-free vector field and a first integral $\psi$ is a function which respectively satisfy
\[[u,B] = 0, \qquad  B\cdot \nabla \psi = 0.\]
Here $[u,B]$ is the Lie-bracket of vector fields $u$ and $B$. In the context of plasma physics, one considers magneto-hydrostatics (MHS) under which $B$ satisfies
\[J \times B = \nabla p, \qquad J = \nabla \times B\]
where $p$ is the pressure and $J$ is the current density. In this case, $J$ is a symmetry of $B$ and $p$ is a first integral of both $B$ and $J$. Another class of magnetic fields, called \emph{quasi-symmetric}, arise when there exists a divergence-free vector field $u$ which possesses $B$ as a symmetry and $\|B\|^2$ as a first integral~\cite{burby2020some,rodriguez-helander-2020}. In magnetic confinement fusion devices, transport of heat and particles along magnetic field-lines is generally several orders of magnitude faster than across. Promoting first integrals is a way of guaranteeing temperature and density gradients because then the field-lines lie on lower-dimensional manifolds. Without substantial gradients, the core of a fusion device would not be hot enough to sustain fusion reactions and the outer region cold enough for the device's walls to resist damage. 

For any $B$ admitting both a symmetry and an integral, the field is integrable in the broad sense \cite{burby2021integrability} as defined in \cite{bogoyavlenskijExtendedIntegrabilityBiHamiltonian1998}. On the other hand, divergence-free fields generally admit no symmetries nor integrals and are chaotic \cite{Arnold65b,cardonaConstructingTuringComplete2021}.

Between the cases of chaos and integrability lies magnetic fields $B$ which admit either an integral or a symmetry, but a priori not both. For example, if one has a vacuum magnetic field, which is a divergence-free vector field $B$ satisfying $\nabla \times B = 0$, and it is assumed to have nested flux surfaces, it is not immediately clear whether or not $B$ also possesses a symmetry.

It has been established \cite{hallerReductionThreedimensionalVolumepreserving1998,burby2021integrability} that a divergence-free vector field $B$ in 3D admits a Noether theorem: loosely, this states that for each symmetry field of $B$, there corresponds a first integral of $B$. However, the converse is false: there exists $B$ which has a first integral but no corresponding symmetry (see for instance \cref{counterexample}). Fortunately, a partial converse is available. If $B$ is non-vanishing and has a first integral $\psi$, as indicated in \cite{borisovConservationLawsHierarchy2008}, an extension of an argument due to Kolmogorov \cite{sinaiIntroductionErgodicTheory1976} shows that in a neighbourhood $U$ of a compact, regular level-surface of $\psi$, there exists coordinates $(\theta_1,\theta_2,\psi) : U \to \mathbb{T}^2\times \R$ and a smooth positive function $f > 0$ such that 
\[ B/f = a(\psi) \partial_{\theta_1} + b(\psi) \partial_{\theta_2}.\]
Thus, the vector field $u = b(\psi)\partial_{\theta_1} - a(\psi)\partial_{\theta_2}$ on $U$, for example, is a symmetry for the rescaled field $B/f$. In this case, $B$ is said to be semi-rectified in this neighborhood $U$ \cite{perrella2022rectifiability}. However, this converse is local to a given regular level surface of $\psi$.

The main result of this paper is a \emph{converse conformal Noether theorem for divergence-free vector fields}; while $B$ may not admit a symmetry, a rescaling of $B$ does. We call such a symmetry a \emph{conformal symmetry} of $B$. The proof is independent of Kolmogorov's original ergodic methods and is more global than the above construction. The result takes place on a 3-manifold $M$ with (possibly empty) boundary and is centred around flux systems.
\begin{definition}\label{def:fluxsystem}
A \emph{flux system} is a triple $(B,\nu,\mu)$ where $B$ is a vector field on $M$, $\nu$ is a closed 1-form on $M$, and $\mu$ is a volume form on $M$, such that
\[ \nu(B) = 0, \qquad \L_B \mu = 0,\]
where $\L_B$ denotes the Lie derivative with respect to $B$.
We call $(B,\nu,\mu)$ a \emph{tangential flux system} if $B$ is tangent to $\partial M$ and the kernel of $\nu$ contains all vectors tangent to $\partial M$.
\end{definition}
For most applications $\nu = d\psi$ will be exact so that $\psi$ will be a first integral of $B$. This occurs automatically if the first cohomology of $M$ is trivial, $H^1(M) = 0$. Our main result is split into two parts. The first part highlights the role played by \emph{adapted 1-forms}.
\begin{definition}\label{def:adapted_one-form}
A 1-form $\eta \in \Omega^1(M)$ is said to be \emph{adapted to a flux system $(B,\nu,\mu)$} if
\[\eta(B) > 0, \qquad d\eta\wedge \nu = 0.\]
\end{definition}
Clearly only non-vanishing $B$ could admit such a 1-form. The first part of the main result is stated as follows.
\begin{theorem}\label{thm:uexistsGivenEta}
Let $(B,\nu,\mu)$ be a flux system on $M$. Assume that there exists a 1-form $\eta$ adapted to the flux system. Then, the vector field $u$ satisfying
\[\iota_u \mu = \frac{\nu \wedge \eta}{\eta(B)}\]
is the unique vector field $u$ such that
\[
    \iota_{u} \iota_{B} \mu = \nu, \qquad \eta(u) = 0
\]
and the vector fields $u$ and rescaling $\tB = B/\eta(B)$
\begin{enumerate}
    \item are linearly independent at $x \in M$ whenever $\nu|_x \neq 0$,
    \item satisfy $\nu(\tB) = 0 = \nu(u)$,
    \item commute $[u,\tB] = 0$,
    \item and preserve the volume form $\tmu := \eta(B) \mu$.
\end{enumerate}
\end{theorem}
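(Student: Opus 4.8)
The plan is to verify every assertion by direct Cartan calculus on the $3$-manifold $M$, arranging the argument so that each hypothesis of the flux system and of adaptedness is invoked exactly where it is needed. Throughout I use that $\eta(B)>0$ forces $B$, and hence $\tB$, to be nowhere vanishing, and that any $4$-form on $M$ vanishes identically.

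First I would check that the $u$ defined by $\iota_u\mu = (\nu\wedge\eta)/\eta(B)$ satisfies the two characterizing conditions. Contracting the definition with $B$ and using $\iota_B(\nu\wedge\eta)=\nu(B)\,\eta-\eta(B)\,\nu=-\eta(B)\,\nu$ gives $\iota_B\iota_u\mu=-\nu$, equivalently $\iota_u\iota_B\mu=\nu$. Wedging the definition with $\eta$ annihilates the right-hand side, and expanding $0=\iota_u(\eta\wedge\mu)$ (recall $\eta\wedge\mu=0$) yields $\eta(u)\,\mu=\eta\wedge\iota_u\mu=0$, so $\eta(u)=0$. The identical computation with $\nu$ in place of $\eta$ gives $\nu(u)=0$, while $\nu(\tB)=\nu(B)/\eta(B)=0$ is immediate; this settles item~2. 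For uniqueness, if $u'$ also satisfies $\iota_{u'}\iota_B\mu=\nu$ and $\eta(u')=0$, then $\iota_B\iota_{u-u'}\mu=0$; since a $2$-form on a $3$-manifold annihilated by $\iota_B$ (with $B$ nowhere zero) must be a function multiple of $\iota_B\mu$, we get $u-u'=gB$ for some function $g$, and applying $\eta$ forces $g=0$. Item~1 follows in the same vein: a dependence $c_1\,u|_x+c_2\,\tB|_x=0$ paired with $\eta$ gives $c_2=0$ (as $\eta(\tB)=1$, $\eta(u)=0$), hence $c_1\,u|_x=0$, and $c_1\neq0$ would give $u|_x=0$, contradicting $\nu|_x=(\iota_u\iota_B\mu)|_x\neq0$.

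Volume preservation (item~4) comes from Cartan's formula applied to the closed top forms $\mu$ and $\tmu$. From $\iota_{\tB}\tmu=\iota_B\mu$ one gets $\L_{\tB}\tmu=d\iota_B\mu=\L_B\mu=0$, and from $\iota_u\tmu=\eta(B)\,\iota_u\mu=\nu\wedge\eta$ one gets $\L_u\tmu=d(\nu\wedge\eta)=d\nu\wedge\eta-\nu\wedge d\eta$, which vanishes because $d\nu=0$ and $d\eta\wedge\nu=0$. Finally, for the commutation relation (item~3) I would feed $\tmu$ into the identity $\iota_{[u,\tB]}=\L_u\iota_{\tB}-\iota_{\tB}\L_u$: the second term dies since $\L_u\tmu=0$, and the first equals $\L_u(\iota_B\mu)=d(\iota_u\iota_B\mu)+\iota_u(d\iota_B\mu)=d\nu+\iota_u\L_B\mu=0$. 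Nondegeneracy of $\tmu$ then forces $[u,\tB]=0$.

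Each individual step is short; the piece needing the most care in packaging is item~3, where the key observations are that $\iota_{\tB}\tmu=\iota_B\mu$ and that $\L_u\tmu$ has already been shown to vanish, so the commutator identity collapses the computation to $d\nu$ together with $\iota_u\L_B\mu$, both zero. The only point that is not purely formal is the small linear-algebra fact used in the uniqueness proof — that on a $3$-manifold a $2$-form in the kernel of $\iota_B$, with $B$ nowhere zero, is a multiple of $\iota_B\mu$ — which is precisely where nonvanishing of $B$, guaranteed by $\eta(B)>0$, is essential.
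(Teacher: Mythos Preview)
Your proof is correct and follows essentially the same route as the paper: direct Cartan calculus to verify the defining relations, then $\L_u\tmu=d(\nu\wedge\eta)=0$ from $d\nu=0$ and $d\eta\wedge\nu=0$, and finally the commutator identity $\iota_{[u,\tB]}\tmu=\L_u\iota_{\tB}\tmu-\iota_{\tB}\L_u\tmu$ to obtain $[u,\tB]=0$. The one notable difference is in the uniqueness argument: the paper packages it as an explicit bundle isomorphism $\hat b(X)=\iota_X\iota_B\mu+\eta(X)\eta$ with a computed inverse, whereas you argue directly that $\ker(\iota_B)$ on $2$-forms is spanned pointwise by $\iota_B\mu$, forcing $u-u'=gB$ and then $g=0$ from $\eta$. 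Your version is shorter and perfectly adequate for this theorem; the paper's isomorphism is a bit more structural and reusable, but the content is the same.
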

\cref{thm:uexistsGivenEta} is constructive and global. It shows that, given a flux system $(B,\nu,\mu)$, one can use the 1-form $\eta$, if it exists, to uniquely generate a vector field $u$ which is a symmetry for the rescaled field $\tB$. It will also be shown how adapted 1-forms relate to the theory in \cite{burby2021integrability}, which in turn can be used to prove \cref{thm:uexistsGivenEta} from a more abstract perspective.

The second part addresses an outstanding question left from the first: for a given flux system $(B,\nu,\mu)$ with $B$ having no zeroes and $\nu$ having nowhere dense critical points, does an adapted 1-form always exist? We provide a partial answer to this question: the semi-global existence of an adapted 1-form. By semi-global, we mean in the sense of \cite{llibreNoteFirstIntegrals2012}, that it holds on an open subset of $M$ determined by the flux system. We call this subset the toroidal region (defined in \cref{sec:eta}). In essence, the toroidal region is the largest subset of $M$ consisting of invariant tori of $B$ and for which $B$ is non-vanishing, together with periodic orbits of $B$ neighboured by the tori. In the context of magnetic fields, these periodic orbits are called \emph{magnetic axes}. The following example illustrates the toroidal region.

\begin{example}\label{NonAutHamExample}
Consider the manifold $M = \R^2 \times \R/\mathbb{Z}$ with $x,y,t$ the respective coordinate projections of each factor. Define the flux system $(B,\nu,\mu)$ where
\[B = y\partial_x +(2x-4x^3)\partial_y + \partial_t, \qquad \nu = d\psi, \qquad \mu = dx \wedge dy \wedge dt,\]
and $\psi = \tfrac12 y^2 - x^2(1-x^2)$. In this case, the toroidal region $\mathcal{T} = M \backslash \{\psi = 0\}$ has three connected components divided by $\{\psi = 0\}$ as depicted in \cref{fig:toroidalregion}.
\begin{figure}[h!]
    \centering
    \includegraphics[scale = 0.5]{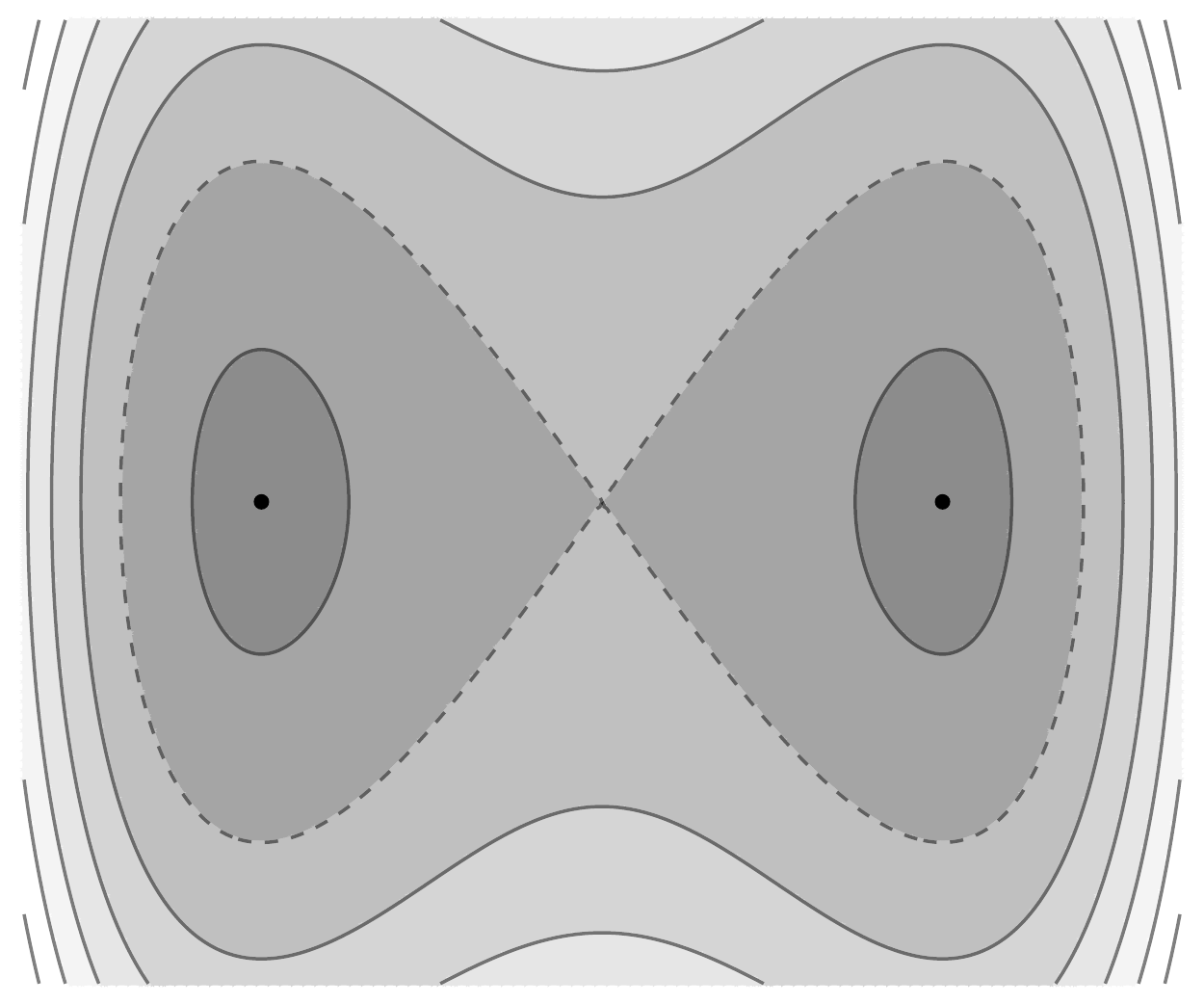}
    \caption{A constant-$t$ Poincar{\'e} section of $B$. The toroidal region is everything except for $\{\psi = 0\}$ which intersects the Poincar{\'e} section in a figure eight (dashed).}
    \label{fig:toroidalregion}
\end{figure}
\end{example}

The second part is stated as follows.

\begin{theorem}\label{thm:etaexists}
Let $(B,\nu,\mu)$ be a tangential flux system on $M$. Then, the toroidal region $\mathcal{T}$ is an open subset of $M$ for which the flux system restricted to $\mathcal{T}$ satisfies the assumptions, and therefore the conclusion, of \cref{thm:uexistsGivenEta}.
\end{theorem}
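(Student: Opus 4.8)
Throughout, write $\psi$ for a local primitive of the closed $1$-form $\nu$, so that on any contractible set a level set of $\psi$ is a leaf of the codimension-one foliation $\ker\nu$. The plan is to reduce the statement to producing a single $1$-form $\eta$ adapted to the restricted flux system $(B,\nu,\mu)|_{\mathcal{T}}$; once such an $\eta$ exists, \cref{thm:uexistsGivenEta} applied on $\mathcal{T}$ delivers the symmetry $u$ together with all of properties (1)--(4). So there are three tasks: (a) show $\mathcal{T}$ is open in $M$; (b) pin down the local geometry of $B$ at each point of $\mathcal{T}$; (c) glue local adapted forms into a global one while preserving the closed-type condition $d\eta\wedge\nu=0$.

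For (a) and (b) I would set up a normal form around each of the two kinds of point making up $\mathcal{T}$. Around a point on an invariant $2$-torus of $B$ with $B\neq 0$: the torus is a compact regular level set of $\psi$ (equivalently a compact leaf of $\ker\nu$ on which $B$ is tangent and nonzero), and the Kolmogorov-type semi-rectification recalled in the introduction gives coordinates $(\theta_1,\theta_2,\psi)\colon V\to \T^2\times(-\epsilon,\epsilon)$ in which every slice $\T^2\times\{c\}$ is an invariant torus, $\nu=d\psi$, and $B=f\,(a(\psi)\partial_{\theta_1}+b(\psi)\partial_{\theta_2})$ with $f>0$ and $(a,b)$ nowhere zero; in particular $V\subset\mathcal{T}$ by the definition of $\mathcal{T}$. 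Around a magnetic axis: by definition the axis is a periodic orbit neighboured by invariant tori, so I would argue that a tubular neighbourhood is a solid torus $D^2\times S^1$ on which $\psi$ has the axis as a transverse nondegenerate extremum, whose regular level sets are compact connected $B$-invariant surfaces carrying a nonvanishing field — hence, being co-oriented by $d\psi$ in the oriented $M$, $2$-tori — so that this neighbourhood too lies in $\mathcal{T}$ and carries an angular coordinate $\phi$ along the axis with $d\phi(B)>0$. Every point of $\mathcal{T}$ falls in one of these two cases, so such neighbourhoods cover $\mathcal{T}$ and $\mathcal{T}$ is open; the tangential hypothesis is what ensures the boundary-touching tori behave, giving half-open charts $\T^2\times[0,\epsilon)$ there, and that the restricted triple is still a flux system is immediate from \cref{def:fluxsystem}.

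For (c), in each torus-type chart set $\eta_V=\alpha\,d\theta_1+\beta\,d\theta_2$ with constants $(\alpha,\beta)$ chosen, after shrinking $V$ so that $(a(\psi),b(\psi))$ stays in an open half-plane, so that $\alpha a+\beta b>0$ throughout $V$; then $\eta_V$ is closed and $\eta_V(B)=f(\alpha a+\beta b)>0$. In each axis-type chart set $\eta_V=d\phi$, which is closed with $\eta_V(B)>0$. To assemble these, take a partition of unity $\{\rho_\alpha\}$ subordinate to a refinement of this cover by \emph{saturated} open sets (unions of leaves of $\ker\nu$, i.e.\ pulled back from the leaf space of $\nu|_{\mathcal{T}}$), chosen leafwise constant, and put $\eta=\sum_\alpha\rho_\alpha\,\eta_{V_\alpha}$. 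Then $\eta(B)=\sum_\alpha\rho_\alpha\,\eta_{V_\alpha}(B)>0$, and since each $d\rho_\alpha$ annihilates $\ker\nu$ (being the differential of a leafwise-constant function, with $d\rho_\alpha=0$ along the axes, where $\nu=0$) we have $d\rho_\alpha=g_\alpha\nu$ locally, whence
\[ d\eta\wedge\nu=\sum_\alpha\bigl(d\rho_\alpha\wedge\eta_{V_\alpha}\wedge\nu+\rho_\alpha\,d\eta_{V_\alpha}\wedge\nu\bigr)=0 . \]
Thus $\eta$ is adapted to $(B,\nu,\mu)|_{\mathcal{T}}$, and \cref{thm:uexistsGivenEta} applies on $\mathcal{T}$ to finish.

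The coordinate computations and the convexity check above are routine; the real work — and where I expect the main obstacle — is twofold. First is the normal form at a magnetic axis: one must control the dynamics near the periodic orbit well enough, using only the flux-system data, to see that the surrounding invariant sets genuinely are nested $2$-tori filling an entire neighbourhood of the axis (rather than accumulating on it from one side, or failing to be tori), so that the axis and its neighbourhood indeed lie in $\mathcal{T}$. Second is the globalisation: the argument needs the leaf space of $\nu|_{\mathcal{T}}$ to be a well-behaved $1$-manifold, with boundary at the axes and Hausdorff, so that a leafwise-constant partition of unity subordinate to a saturated cover exists — and checking this amounts to unpacking the definition of $\mathcal{T}$ from \cref{sec:eta} and verifying that precisely the separatrix-type sets excluded from $\mathcal{T}$ are what would otherwise spoil this structure.
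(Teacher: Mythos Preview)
Your overall architecture matches the paper's exactly: build a closed local $\eta$ near every regular torus and near every axis, then glue with a partition of unity whose bump functions are constant along leaves of $\ker\nu$ (so $d\rho_\alpha\wedge\nu=0$), and finally check $d\eta\wedge\nu=\sum d\rho_\alpha\wedge\eta_\alpha\wedge\nu=0$. The paper's substantial work is precisely the construction of this leafwise-constant partition of unity (done via flowouts and a dichotomy: each connected component of the pre-toroidal region is either compact or globally a product $S\times I$), which you correctly flag as the main obstacle.

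Two points deserve correction. First, for the torus neighbourhood you invoke the Kolmogorov semi-rectification. That is logically fine but heavier than needed, and it undercuts one of the paper's stated aims, namely to proceed \emph{independently} of Kolmogorov's ergodic argument. The paper instead uses only a flowout $\Sigma:S\times I\to M$ with $\Sigma^*\nu=dt$, observes that the induced field on the single torus $S\times\{0\}$ is area-preserving, and appeals to the elementary fact that any nonvanishing area-preserving vector field on $\T^2$ admits a closed $1$-form positive on it; pulling this back along the projection gives the local closed $\eta$. No straightening of $B$ across the foliation is required.

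Second, your axis discussion asserts that $\psi$ has the axis as a \emph{transverse nondegenerate} extremum. This is not part of the definition of an axis and is not provable from the flux-system data alone; the paper introduces nondegeneracy as a separate hypothesis only later, for the purpose of near-axis Hamada coordinates. For the present theorem it is irrelevant: your own $\eta_V=d\phi$ comes simply from retracting a tubular neighbourhood onto the periodic orbit $\gamma$ and pulling back the obvious $1$-form on $\gamma$, which gives a closed form with $\eta_V(B)>0$ near $\gamma$ with no control on the Hessian of $\psi$ needed. Likewise, that a neighbourhood of the axis lies in $\mathcal{T}$ is essentially definitional (an axis is by definition a degenerate closed leaf, so has a punctured neighbourhood foliated by closed leaves; continuity of $B$ and the index-theory remark then make those leaves regular tori), so your first anticipated obstacle largely dissolves.
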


In fact, motivated by the work of \cite{Peralta-Salas2021,Kuperberg1996} we exhibit an example (\cref{theexample}) flux system $(B,d\psi,\mu)$ on a solid torus for which an adapted 1-form exists on all level sets of $\psi$ except the unique level set contained in the complement of the toroidal region. Interestingly, this flux system still possesses a symmetry and $S^*$ is in fact a torus.

The techniques used to prove \cref{thm:etaexists} also bring about the following classification of the types of topologies which can occur for the toroidal region. Some possible toroidal domains are given in \cref{fig:toroidalDomains}.

\begin{proposition}\label{classification}
Let $C$ be a connected component of $\mathcal{T}$ considered as an open submanifold of $M$ with boundary and consider the manifold interior $\interior C = C \backslash \partial C$.
\begin{enumerate}
    \item If $\interior C$ is compact, then $C = \interior C$ does not have boundary nor contain axes.
    \item Otherwise, $C$ is diffeomorphic to either $\mathbb{T}^2 \times I$ where $I$ is some interval, a solid torus $D \times \mathbb{S}^1$ (where $D$ denotes the closed unit disk), an open solid torus $\interior D \times \mathbb{S}^1$, or $\mathbb{S}^2 \times \mathbb{S}^1$. In all these cases, $\nu$ is exact on $C$.
\end{enumerate}
\end{proposition}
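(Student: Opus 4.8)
The plan is to recover the diffeomorphism type of $C$ from the singular foliation of $C$ by the invariant tori of $B$. Let $\mathcal{A}\subset C$ be the set of magnetic axes. By the construction of the toroidal region in \cref{sec:eta}, $\mathcal{A}$ is closed in $C$ and is a disjoint union of embedded circles, and $C^{\circ}:=C\setminus\mathcal{A}$ is foliated by compact leaves, each diffeomorphic to $\mathbb{T}^2$. Inside $\mathcal{T}$ the $1$-form $\nu$ vanishes only along $\mathcal{A}$, so near any leaf $\nu$ is a non-vanishing closed $1$-form having that leaf as a regular level set of a local primitive; hence the foliation is the trivial product $\mathbb{T}^2\times(-\varepsilon,\varepsilon)$ near each leaf and has trivial holonomy. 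Consequently the leaf space $N:=C^{\circ}/{\sim}$ is a Hausdorff connected one-manifold with boundary, and $q\colon C^{\circ}\to N$ is a locally trivial bundle with fibre $\mathbb{T}^2$. The solid-torus normal form near an axis from \cref{sec:eta} — in which the neighbouring tori are concentric and shrink onto the axis — identifies each component of $\mathcal{A}$ with an open end of $N$ whose neighbourhood in $C$, axis included, is a solid torus; each boundary point of $N$ corresponds to a torus component of $\partial C=C\cap\partial M$; and any remaining open end of $N$ is a genuine open end of $C$.

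It then suffices to list the connected one-manifolds with boundary together with the possible labelling of their ends as \emph{axis}, \emph{boundary}, or \emph{open}, and to reconstruct $C$ in each case. If $N=\mathbb{S}^1$ then there are no ends, so $\mathcal{A}=\varnothing$ and $\partial C=\varnothing$ and $C=C^{\circ}$ is a $\mathbb{T}^2$-bundle over $\mathbb{S}^1$, hence with $\interior C=C$ compact. If $N$ is an interval with no axis end then, a $\mathbb{T}^2$-bundle over a contractible base being trivial, $C\cong\mathbb{T}^2\times I$ with $I$ open, half-open, or closed according to the ends. If $N$ is an interval with exactly one axis end then filling it converts $\mathbb{T}^2\times I$ into a solid torus: $D\times\mathbb{S}^1$ if the other end of $N$ is a boundary point, and $\interior D\times\mathbb{S}^1$ if it is an open end. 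If $N$ is an interval with two axis ends then $C$ is closed and is a union of two solid tori glued along a common torus leaf, which the gluing analysis below identifies as $\mathbb{S}^2\times\mathbb{S}^1$. In each of these last three cases $\nu$ is exact on $C$: every class of $H^1(C;\R)$ is represented by a loop inside a single torus leaf $T_c$, and since $\nu$ is closed with $\nu(B)=0$ and $T_c$ is a regular level set of a local primitive, $\nu|_{T_c}=0$, so all periods of $\nu$ vanish and $\nu=d\psi$ on $C$.

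The two-solid-tori case is the principal obstacle: a priori such a gluing gives a lens space or $\mathbb{S}^3$, so the gluing datum — the pair of meridian isotopy classes, compared on a common middle torus — must be pinned down. I would use the global symmetry field $u$ of \cref{thm:uexistsGivenEta}: its defining relation $\iota_u\mu=(\nu\wedge\eta)/\eta(B)$ makes sense throughout $\mathcal{T}$, so $u$ extends smoothly over $C$; it vanishes exactly on $\mathcal{A}$, and away from $\mathcal{A}$ it is tangent to the torus leaves and transverse to $\tB$. Determining which cycle of $\mathbb{T}^2$ is killed by the linearisation of $u$ along each axis should force the two shrinking meridians to be homologous on a middle torus, which is exactly the condition for the two solid tori to glue up to $\mathbb{S}^2\times\mathbb{S}^1$; equivalently, one builds an essential $2$-sphere by capping a cross-sectional annulus with the two meridian disks. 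The same device should confirm that the one-axis case is genuinely a (closed or open) solid torus rather than a more twisted filling.

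Finally, for alternative (1): if $\interior C$ is compact then, being a compact subset of the Hausdorff space $C$, it is closed in $C$, so $\partial C=C\setminus\interior C$ is both open and closed in $C$; connectedness gives $\partial C=\varnothing$ and $C=\interior C$ a closed $3$-manifold. A closed $C$ has no boundary point of $N$, and every open end of $N$ is an axis end, so the leaf-space analysis leaves only $N=\mathbb{S}^1$ (whence the conclusion of (1), with $\mathcal{A}=\varnothing$) or $N=\R$ with two axis ends; ruling out the latter under the hypotheses of (1) — and thereby showing (1) and (2) are exhaustive — should again follow from the structure of $\mathcal{T}$ in \cref{sec:eta}, in particular from the constraint that $\L_{\tB}\tmu=0$ places on the vanishing set of $u$. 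To summarise, the two steps I expect to require real work are the trivial-holonomy/fibration claim near a torus leaf and near an axis, and this gluing identification in the closed case; the remainder is bookkeeping over one-manifolds.
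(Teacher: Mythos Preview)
Your overall strategy---reducing to the leaf space of the torus foliation and then bookkeeping over a one-manifold---is essentially the paper's, but the paper phrases the same reduction concretely via \emph{flowouts}: a transverse field $N$ with $\nu(N)=1$ yields, for any regular torus $S$, a map $\Sigma\colon S\times I\to C$; \cref{boundarylessfoundation} shows this is a diffeomorphism onto the complement of boundary and axes whenever that complement is noncompact, which it always is once an axis or boundary component is present. The flowout \emph{is} your leaf-space projection made explicit, and it delivers the $\mathbb{T}^2\times I$ product directly without a separate holonomy/fibration check.

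Where you genuinely diverge is in treating the axis neighbourhoods and the two-axis gluing. The paper never invokes the symmetry field $u$ here. Instead it notes that a tubular neighbourhood of an axis $\gamma\cong\mathbb{S}^1$ is the disk bundle of its rank-$2$ normal bundle, which is trivial because $M$ is orientable; hence the neighbourhood is $D^2\times\mathbb{S}^1$ outright. This is simpler and avoids the linearisation analysis of $u$ that you leave as a heuristic (``should force'', ``should confirm''). Using $u$ is not circular---\cref{thm:etaexists} rests only on the partition of unity, not on the classification---but it is a detour, and your sketch does not actually pin down the meridian. The paper's own passage from ``two solid tori glued along a torus'' to $\mathbb{S}^2\times\mathbb{S}^1$ is likewise terse, so your instinct that this is the real content is sound; the normal-bundle route is the cleaner starting point.

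On item (1): your attempt to rule out the two-axis case under the hypothesis ``$\interior C$ compact'' via $\L_{\tB}\tmu=0$ will not succeed, and your unease is warranted---the $\mathbb{S}^2\times\mathbb{S}^1$ model has $\partial C=\varnothing$, so $\interior C=C$ is compact yet contains two axes, which sits awkwardly against the stated dichotomy. What the paper's argument actually shows (in \cref{classificationofC}) is that $\interior C\setminus\mathcal{C}^1$, not $\interior C$ itself, fails to be compact once an axis or boundary component is present; that is the dichotomy you should work with.
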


\begin{figure}[h!]
    \centering
    \includegraphics[width=\linewidth]{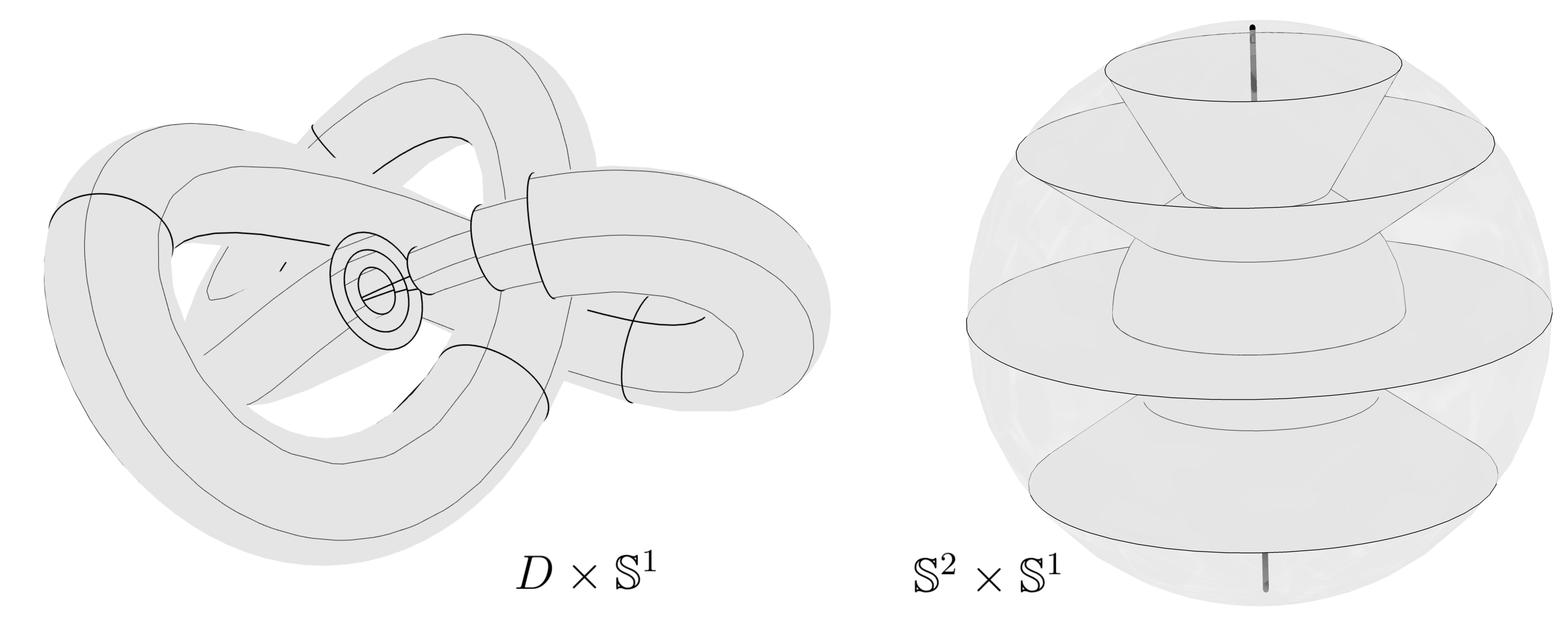}
    \caption{Two examples of possible toroidal domains. Left is a knotted toroidal domain given by the embedding of a solid torus $D\times\mathbb{S}^1$. The case $\interior D \times \mathbb{S}^1 $ and $\T^2\times I$ are analagous. On the right is the more exotic $\mathbb{S}^2\times\mathbb{S}^1$, demonstrating a toroidal domain with two axes. The annuli depicted in the right diagram are the invariant tori after identification of the inner and outer sphere producing the $\mathbb{S}^1$ factor. The two axes pass through different poles of the spheres.}
    \label{fig:toroidalDomains}
\end{figure}

The paper proceeds as follows. In \cref{sec:ifeta}, \cref{thm:uexistsGivenEta} is proved and its relation to dynamical systems theory is discussed with an example. Moreover, a counterexample to the naive converse of a Noether type theorem for divergence free fields is also given. In \cref{sec:eta}, \cref{thm:etaexists} is proved, and the toroidal region is classified. In \cref{sec:obstructiontoglobaleta} an example exhibiting an obstruction to the global existence of an adapted 1-form is discussed. In \cref{sec:flux-coordinates}, we discuss our methods in relation to plasma physics, specifically to the existence of flux-coordinates. In \cref{sec:discuss}, the results are discussed both mathematically and in relation to plasma physics. The discussion includes comments on the Noether type theorems and future work in the context of integrable systems.

\section{The Conformal Noether theorem given an adapted 1-form}\label{sec:ifeta}

The main purpose of this section is to prove \cref{thm:uexistsGivenEta}. The remaining material puts this theorem into its context as a (partial) converse to a conformal Noether Theorem. In \cref{fwdNoether}, the (forward) conformal Noether Theorem is presented and an explicit counterexample to the (non-conformal) converse Noether Theorem is provided. This is then related to \cref{thm:uexistsGivenEta} as a conformal Noether Theorem. In \cref{sec:ElementaryNoether}, an elementary proof of \cref{thm:uexistsGivenEta} is given. An alternative proof is then given in \cref{sec:presymplecticPerspective} by contextualising the theorem in the theory of presymplectic geometry. From this more abstract perspective, the theorem can be proved using a coisotropic embedding of the manifold $M$ into a symplectic manifold $\tilde{M}$. By revealing the connection to symplectic geometry, this version of the proof of \cref{thm:uexistsGivenEta} gives insight into why one should expect the conformal Noether theorem to hold. Lastly, as \cref{thm:uexistsGivenEta} relies on the existence of an adapted 1-form, \cref{sec:whenEtaIsGuaranteed} provides contexts in which adapted 1-forms are guaranteed to exist globally.

Throughout this section, we will make extensive use of the formulae \cite{Lee2012}
\begin{align*}
    \L_X \alpha &= d\iota_X\alpha + \iota_X d\alpha,\\
    \iota_{[X,Y]} \alpha &= \L_X \iota_Y\alpha - \iota_Y \L_X \alpha,
\end{align*}
for any vector fields $X,Y$ and any $k$-form $\alpha$ on $M$. In particular, if $B$ is a divergence-free vector field with respect to a volume form $\mu$, $f$ is a positive function and $u$ is a vector field, writing $\tB = B/f$ and $\tmu = f\mu$, one has $ \L_{\tB} \tmu = \L_B \mu = 0$ and therefore
\begin{equation}\label{eq:ConformalNoether}
\iota_{[u,\tB]}\tmu = \L_u \iota_\tB \tmu - \iota_\tB \L_u \tmu =  d \iota_u \iota_B \mu - \iota_\tB \L_u \tmu =  d \iota_u \iota_B \mu - (\Div_{\tmu}u) \iota_B\mu
\end{equation}
where $\Div_{\tmu}u$ is the divergence of $u$ with respect to $\tmu$: $(\Div_{\tmu}u)\tmu = \mathcal{L}_{u}\tmu$.

\subsection{The Conformal Noether theorem}\label{fwdNoether}

The forward direction of the conformal Noether theorem for divergence-free vector fields is the same observation as that of \cite[Thm.~3.1]{hallerReductionThreedimensionalVolumepreserving1998} but applied to Equation \eqref{eq:ConformalNoether}, which allows functions $f \neq 1$.

\begin{theorem}\label{thm:ForwardNoetherTheorem}
    Let $B$ be a vector field on a 3-manifold $M$ with boundary and assume it is divergence-free with respect to $\mu$. Suppose there exists a vector field $u$ on $M$ and a positive function $f\in C^{\infty}(M)$ such that $[u, B/f] = 0$ and $u$ is divergence-free with respect to $\tmu = f \mu$. Then the 1-form $\nu := \iota_u\iota_B \mu$ makes $(B,\nu,\mu)$ a flux system. 
\end{theorem}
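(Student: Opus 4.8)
The plan is to verify directly the two defining conditions of a flux system for the triple $(B,\nu,\mu)$ with $\nu := \iota_u \iota_B \mu$, namely that $\nu$ is closed and that $\nu(B) = 0$, using the hypotheses $[u, B/f] = 0$ and $\mathcal{L}_u \tmu = 0$ together with the identity \eqref{eq:ConformalNoether} already derived in the excerpt.

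First I would check $\nu(B) = 0$. Writing out $\nu(B) = (\iota_u \iota_B \mu)(B) = \mu(u, B, B) = 0$ since $\mu$ is alternating and $B$ appears twice; this is immediate and requires no hypothesis. Equivalently one notes $\iota_B \nu = \iota_B \iota_u \iota_B \mu = -\iota_u \iota_B \iota_B \mu = 0$. Second, and this is the substantive step, I would show $d\nu = 0$. Here I invoke \eqref{eq:ConformalNoether}, which with $\tB = B/f$ and $\tmu = f\mu$ reads
\[
\iota_{[u,\tB]}\tmu = d\iota_u\iota_B\mu - (\Div_{\tmu} u)\,\iota_B\mu = d\nu - (\Div_{\tmu} u)\,\iota_B\mu.
\]
The hypothesis $[u, B/f] = 0$ kills the left-hand side, and the hypothesis that $u$ is divergence-free with respect to $\tmu$ gives $\Div_{\tmu} u = 0$, so the second term on the right vanishes as well. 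Hence $d\nu = 0$, i.e.\ $\nu$ is closed.

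I do not expect a genuine obstacle here: the theorem is essentially a repackaging of the computation \eqref{eq:ConformalNoether}, which has already been set up in the excerpt, and the role of the two hypotheses is precisely to annihilate the two terms that obstruct closedness of $\nu$. The only points deserving a word of care are: (i) confirming that the identity \eqref{eq:ConformalNoether} is valid on a manifold \emph{with boundary} (it is, being a pointwise identity of differential forms derived from Cartan's formula and the bracket identity quoted at the start of the section), and (ii) noting that one need not separately verify $\mathcal{L}_B \mu = 0$ as part of the flux-system conclusion, since it is part of the standing hypothesis that $B$ is divergence-free with respect to $\mu$. With $\nu(B) = 0$ and $d\nu = 0$ established, the triple $(B,\nu,\mu)$ satisfies Definition \ref{def:fluxsystem}, completing the proof.
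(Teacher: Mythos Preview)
Your proposal is correct and follows essentially the same approach as the paper: both invoke the identity \eqref{eq:ConformalNoether} and use the two hypotheses $[u,\tB]=0$ and $\Div_{\tmu}u=0$ to kill the two terms, yielding $d\nu=0$, while $\nu(B)=0$ is immediate from the alternating property of $\mu$.
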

\begin{proof}
    Equation \eqref{eq:ConformalNoether} reads
    \[\iota_{[u,\tB]}\tmu =  d \iota_u \iota_B \mu - (\Div_{\tmu}u) \iota_B\mu\]
    where $\tB = B/f$. In particular, from $[u,\tB] = 0$ and $\Div_{\tmu}u = 0$, we get that $d \iota_u \iota_B \mu = 0$. Hence, $\nu = \iota_u \iota_B \mu$ is closed. Moreover, $\nu(B) = 0$ (and also $\nu(u) = 0$) making $(B,\nu,\mu)$ a flux system.
\end{proof}

The $u$ in \cref{thm:ForwardNoetherTheorem} is what we call a conformal symmetry of $B$. Ideally, a converse to \cref{thm:ForwardNoetherTheorem} would read as follows.

\begin{quote}
     \textbf{Converse Conformal Noether ``Theorem":} Given a flux system $(B,\nu,\mu)$, there exists a function $f$ and a vector field $u$ which is divergence-free with respect to $f\mu$ such that $\iota_u\iota_B\mu = \nu$. Consequently, $u$ is a conformal symmetry satisfying $[u,B/f] = 0$ and $\nu(u) = 0$.
\end{quote}

What \cref{thm:uexistsGivenEta} says in relation to this Converse Conformal Noether``Theorem" is that this ``Theorem" is true under the additional assumption that a 1-form adapted to $(B,\nu,\mu)$ exists. However, the naive converse Noether ``Theorem" is false: that is, the above ``Theorem" is false if one requires that $f = 1$. This is seen in \cref{counterexample} in \cref{sec:naiveNoetherCounterExample}.

\subsection{An elementary proof of Thm.~\ref{thm:uexistsGivenEta}}\label{sec:ElementaryNoether}

Here we give a constructive and elementary proof of \cref{thm:uexistsGivenEta}. To do so, we will use the following lemma. Let $M$ be a 3-manifold with (possibly empty) boundary.

\begin{lemma}\label{lem:fluxpreservingIFFuexists}
    Suppose $B$ is divergence-free with respect to the volume form $\mu$, $\eta$ a 1-form such that $\eta(B) >0$, and $\nu$ a closed 1-form. Then $(B,\nu,\mu)$ is a flux system if and only if there exists a unique vector field $u$ such that $\iota_u \iota_B \mu = \nu$ and $\eta(u) = 0$, in which case, $u$ satisfies
    \[ \iota_u\mu = \frac{\nu\wedge \eta}{\eta(B)}. \]
\end{lemma}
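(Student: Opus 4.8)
The plan is to prove the equivalence by exhibiting the candidate vector field $u$ directly via the displayed formula and checking it does everything required; the main work is the two-way implication, with the hard direction being ``flux system $\Rightarrow$ such a $u$ exists.''

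\textbf{Setup and the candidate.} Since $\eta(B) > 0$ everywhere, $\tB := B/\eta(B)$ is a well-defined vector field with $\eta(\tB) = 1$; in particular $\tB$ is nowhere zero, so at each point the 2-form $\iota_\tB \mu$ has rank $2$ and $\eta, \iota_\tB\mu$ together determine a vector uniquely. Concretely, the map $v \mapsto (\eta(v), \iota_v\iota_\tB\mu)$ sends $T_xM$ isomorphically onto $\R \oplus (\text{2-forms annihilating } \tB)$ — this is the linear-algebra fact underpinning uniqueness. So I would first define $u$ by the formula $\iota_u\mu = (\nu\wedge\eta)/\eta(B)$ (which determines $u$ uniquely since $\mu$ is nondegenerate) and verify the two defining conditions: $\eta(u) = 0$ and $\iota_u\iota_B\mu = \nu$.

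\textbf{Verifying the formula gives the right $u$.} For $\eta(u) = 0$: contract $\iota_u\mu$ with ... actually use $\eta\wedge\iota_u\mu = \eta \wedge (\nu\wedge\eta)/\eta(B) = 0$ since $\eta\wedge\eta = 0$; because $\eta \wedge \iota_u \mu = \eta(u)\,\mu$ (as $\mu$ is a top form and this is the standard identity $\iota_u(\eta\wedge\mu) = \eta(u)\mu - \eta\wedge\iota_u\mu$ with $\eta\wedge\mu = 0$), this forces $\eta(u) = 0$. For $\iota_u\iota_B\mu = \nu$: contract the defining formula with $B$, i.e. compute $\iota_B\iota_u\mu = -\iota_u\iota_B\mu$ on one side, and on the other $\iota_B\big((\nu\wedge\eta)/\eta(B)\big) = (\nu(B)\eta - \eta(B)\nu)/\eta(B) = -\nu$ using $\nu(B) = 0$. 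Hence $\iota_u\iota_B\mu = \nu$. This already proves the ``if there is a flux system then $u$ exists (and has this form)'' direction, \emph{and} the reverse-engineering needed for uniqueness. Uniqueness: if $u'$ also satisfies $\iota_{u'}\iota_B\mu = \nu$ and $\eta(u') = 0$, then $w := u - u'$ satisfies $\iota_w\iota_B\mu = 0$ and $\eta(w) = 0$; the first says $w$ is pointwise proportional to $B$ (kernel of $\iota_\cdot\iota_B\mu$ is $\mathrm{span}(B)$ since $B\ne 0$), and then $\eta(w) = 0$ with $\eta(B) > 0$ forces $w = 0$.

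\textbf{The converse direction.} Suppose instead such a $u$ exists with $\iota_u\iota_B\mu = \nu$ and $\eta(u) = 0$; I must show $(B,\nu,\mu)$ is a flux system, i.e. $\nu(B) = 0$, $d\nu = 0$, and $\L_B\mu = 0$. The last is given by hypothesis ($B$ divergence-free w.r.t.\ $\mu$). For $\nu(B) = 0$: $\nu(B) = \iota_B\iota_u\iota_B\mu = 0$ since $\iota_B\iota_B = 0$. That $\nu$ is closed is part of the hypotheses of the lemma (``$\nu$ a closed 1-form''), so there is nothing to prove there — the lemma as stated pre-assumes $\nu$ closed, and ``flux system'' additionally requires exactly $\nu(B) = 0$ and $\L_B\mu = 0$, both now checked. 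Finally I would record the formula $\iota_u\mu = (\nu\wedge\eta)/\eta(B)$ holds for this $u$: from $\eta(u) = 0$ we get $\eta\wedge\iota_u\mu = 0$, so $\iota_u\mu = \eta\wedge\beta$ for some 1-form $\beta$ (pointwise, anything killed by wedging... more carefully, $\iota_u\mu$ is a 2-form whose contraction with the $\eta$-dual direction vanishes, forcing it into the ideal generated by $\eta$); pairing with $B$ gives $-\nu = \iota_B(\eta\wedge\beta) = \beta - \eta(B)... $ — rather than belabor this, I would instead note we already constructed the unique such $u$ explicitly above, so the $u$ here must coincide with it. The only genuine subtlety — and the step I expect to need the most care — is the pointwise linear algebra justifying that $v\mapsto(\eta(v),\iota_v\iota_\tB\mu)$ is an isomorphism and that $\ker(\iota_\cdot\iota_B\mu) = \mathrm{span}(B)$ when $B_x\ne 0$; everything else is a short Cartan-calculus computation.
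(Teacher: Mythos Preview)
Your proof is correct and follows essentially the same linear-algebraic route as the paper. The only difference is packaging: the paper encodes both existence and uniqueness in a single bundle isomorphism $\hat b:X\mapsto \iota_X\iota_B\mu+\eta(X)\eta$ (whose inverse applied to $\nu$ yields exactly your formula), whereas you verify the formula directly and then handle uniqueness by the kernel argument $\ker(\iota_{\,\cdot}\iota_B\mu)=\mathrm{span}(B)$; both arguments rest on the same pointwise linear algebra.
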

\begin{proof}
    The proof of this lemma is similar to that of \cite[Lemma~3.4]{burby2021integrability}. One direction is clear; if $\iota_u \iota_B \mu = \nu$ then $0 = \iota_B \iota_u \iota_B \mu = \iota_B\nu$ regardless of whether $\eta(u) = 0$. Hence, $(B,\nu,\mu)$ is a flux system. 
    
    To prove the other direction, it suffices to show that the linear map $\hat{b}: \Gamma(TM) \to \Omega^1(M)$ from vector fields to 1-forms defined by
    \[ \hat{b}(X) = \iota_X \iota_B \mu + \eta(X)\eta \]
    is a bundle isomorphism with inverse $\hat{b}^{-1} :\Omega^1(M) \to \Gamma(TM)$ defined by
    \[\iota_{\hat{b}^{-1}(\alpha)}\mu = \frac{\alpha\wedge\eta}{\eta(B)}+\frac{\alpha(B)}{\eta(B)^2}\iota_B\mu\]
    where products (interior or exterior) and evaluations (of k-forms) are interpreted are occurring at the according fibres of $TM$ and $T^*M$. Indeed, with this fact, if $u'$ is a vector field satisfying $\iota_{u'} \iota_B \mu = \nu$ and $\eta(u') = 0$, then
    \[\hat{b}(u') = \iota_{u'} \iota_B \mu + \eta(u')\eta = \nu.\]
    On the other hand, the vector field $u = \hat{b}^{-1}(\nu)$ satisfies $\nu = \iota_{u} \iota_B \mu + \eta(u)\eta$ and evaluating this equation on $B$ shows that $\eta(u) = 0$ because $\eta(B) > 0$. 
    
    Now, to establish the isomorphism, let $X \in \Gamma(TM)$ and $\alpha \in \Omega^1(M)$. Then, because $\eta(B) > 0$, the following are equivalent.
    \begin{align*}
    \hat{b}(X) = \alpha & \Longleftrightarrow \iota_X\iota_B\mu + \eta(X) \eta = \alpha\\
    & \Longleftrightarrow (\iota_X\iota_B\mu + \eta(X) \eta)(B) = \alpha(B) \text{ and } (\iota_X\iota_B\mu + \eta(X) \eta)\wedge \eta = \alpha \wedge \eta.
    \end{align*}
    On the other hand,    
    \[(\iota_X\iota_B\mu + \eta(X) \eta)(B) = \eta(X) \eta(B)\]
    and
    \begin{align*}
    (\iota_X\iota_B\mu + \eta(X) \eta)\wedge \eta &= \iota_X\iota_B\mu \wedge \eta\\
    &= \iota_X(\iota_B\mu \wedge \eta)-\eta(X)\iota_B\mu\\
    &= \eta(B)\iota_X \mu-\eta(X)\iota_B\mu.
    \end{align*}
    Hence, we have the additional equivalent statements.
    \begin{align*}
    &\hat{b}(X) = \alpha\\
    & \Longleftrightarrow \eta(X) \eta(B) = \alpha(B) \text{ and }\eta(B)\iota_X \mu-\eta(X)\iota_B\mu = \alpha \wedge \eta\\
    & \Longleftrightarrow \eta(X) = \frac{\alpha(B)}{\eta(B)} \text{ and } \iota_X \mu = \frac{\alpha \wedge \eta}{\eta(B)} + \frac{\alpha(B)}{\eta(B)^2}\iota_B\mu.
    \end{align*}
    Now, observe that
    \[\iota_X \mu \wedge \nu = \eta(X)\mu, \qquad \left(\frac{\alpha \wedge \eta}{\eta(B)} + \frac{\alpha(B)}{\eta(B)^2}\iota_B\mu\right) \wedge \eta = \frac{\alpha(B)}{\eta(B)}\mu,\]
    and therefore
    \[\iota_X \mu = \frac{\alpha \wedge \eta}{\eta(B)} + \frac{\alpha(B)}{\eta(B)^2}\iota_B\mu \implies \eta(X) = \frac{\alpha(B)}{\eta(B)}.\]
    Hence, we obtain the equivalence
    \begin{align*}
    &\hat{b}(X) = \alpha \Longleftrightarrow \iota_X \mu = \frac{\alpha \wedge \eta}{\eta(B)} + \frac{\alpha(B)}{\eta(B)^2}\iota_B\mu
    \end{align*}
    and therefore the proof of our claim.
\end{proof}

We are now ready to prove \cref{thm:uexistsGivenEta}.

\begin{proof}[Proof of \cref{thm:uexistsGivenEta}]
Let $(B,\nu,\mu)$ be a flux system on $M$. Assume that $\eta$ is an adapted 1-form for $(B,\nu,\mu)$. Then, by \cref{lem:fluxpreservingIFFuexists} the vector field $u$ given by
    \[ \iota_u\mu = \frac{\nu\wedge \eta}{\eta(B)}\]
is the unique vector field satisfying
    \[\iota_u \iota_B\mu = \nu, \qquad \eta(u) = 0.\]
It remains to show claims 1 to 4 of \cref{thm:uexistsGivenEta} on the relationship between $u$ and $\tB := \eta(B)^{-1} B$. Claims 1 and 2 can immediately be seen from $\iota_u \iota_B\mu = \nu$. Equation \eqref{eq:ConformalNoether} with $\tB$ and $\tmu = \eta(B) \mu$ shows that claims 3 and 4 are equivalent. Indeed,
\[\iota_{[u,\tB]}\tmu = d \iota_u \iota_B \mu - (\Div_{\tmu}u) \iota_B\mu = d \nu - (\Div_{\tmu}u) \iota_B\mu = - (\Div_{\tmu}u) \iota_B\mu.\]
In addition, claim 4 holds because
\[\mathcal{L}_u \tmu = d \iota_u\tmu = d(\nu \wedge \eta) = \nu \wedge d\eta = 0\]
where we used that $d\eta \wedge \nu = 0$. Thus, all claims hold, completing the proof.
\end{proof}

\begin{remark}
From the proof of \cref{lem:fluxpreservingIFFuexists}, the map $\hat{b}$ can be understood as being a vector bundle isomorphism $b : TM \to T^*M$ whereby
\[b(v) = \iota_v (\iota_B \mu)|_x + \eta|_x(v)\eta|_x \]
for $v \in TM$. This clarifies that $\hat{b}$ is a purely linear-algebraic construction.
\end{remark}


\subsection{A proof of Thm.~\ref{thm:uexistsGivenEta} through a coisotropic embedding}\label{sec:presymplecticPerspective}

In this section, theoretical context is given to \cref{thm:uexistsGivenEta}. In particular, we will take the perspective of \cite{burby2021integrability} in viewing a non-vanishing, divergence-free vector field $B$ on a 3-manifold $M$ as giving $M$ the structure of a presymplectic manifold. Through this perspective, the connection between \cref{thm:uexistsGivenEta} and the usual Noether theorem for symplectic manifolds can be understood.

\subsubsection{A brief introduction to presymplectic geometry}

The vector field $B$ and the volume form $\mu$ which it preserves gives rise to a presymplectic manifold. This change in perspective is facilitated by the observation that the 2-form 
\[ \beta = \iota_B \mu \]
is necessarily closed ($d\beta = 0$) due to the fact that $B$ is divergence-free, and that $\beta$ is of maximal rank by the non-vanishing assumption of $B$. In the plasma physics literature, the form $\beta$ is often called the \emph{flux form}.

Any closed, maximal rank 2-form $\beta$ is called \emph{presymplectic} and makes $(M,\beta)$ a \emph{presymplectic manifold}. The study of a presymplectic manifold is called presymplectic geometry. We follow \cite{burby2021integrability} in briefly summarising some key observations in presymplectic geometry. In particular, we will point out some differences to symplectic geometry arising from the fact that $\beta$ is only of maximal rank and not non-degenerate. 
\begin{definition}[Presymplectic and Hamiltonian Vector Fields]
    A vector field $u$ on $M$ is \emph{presymplectic with respect to $\beta$} or is \emph{locally Hamiltonian} if there exists a closed 1-form
    $\nu$ such that $\iota_u \beta = \nu$. The pair $(u,\nu)$ is called a \emph{presymplectic pair}. If in addition $\nu$ is exact, that is $\nu = -dH$ for some $H\in C^\infty(M)$, then $u$ is called a \emph{Hamiltonian vector field} for the Hamiltonian $H$, and $(u,H)$ is a \emph{Hamiltonian pair}.
\end{definition}

\begin{remark}
    Observe for any vector field $u$ that $\L_u \beta = \iota_u d\beta + d \iota_u \beta = d \iota_X \beta$ and hence $u$ preserves $\beta$ if and only if $u$ is presymplectic.
\end{remark}

Every Hamiltonian pair is presymplectic, and the converse is true if the first de Rham cohomology of $M$, $H^1(M)$, vanishes. It is true that, given a presymplectic pair, $B$ preserves $\nu$ as shown in the following.
\begin{lemma}[\cite{burby2021integrability}]\label{lem:presym}
    Suppose that $(u,\nu)$ is a presymplectic pair for $\beta$. Then
    \begin{enumerate}
        \item $\nu(B) = 0$;
        \item $\L_B \nu =  0$;
        \item $\beta \wedge \nu = 0 $.
    \end{enumerate}
    Moreover, if $(u,H)$ is a Hamiltonian pair, $B$ is tangent to surfaces of constant $H$, that is $\L_B H = 0$.
\end{lemma}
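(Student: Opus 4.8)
The plan is to establish the three numbered claims and the Hamiltonian addendum in sequence, using only the defining relation $\iota_u\beta = \nu$ together with $d\nu = 0$, $d\beta = 0$, and $\beta = \iota_B\mu$, and the Cartan-type identities displayed at the start of the section. First I would prove claim 1: since $\beta = \iota_B\mu$, feeding $B$ into $\nu = \iota_u\iota_B\mu$ gives $\nu(B) = \iota_B\iota_u\iota_B\mu$, and $\iota_B\iota_B = 0$ forces $\nu(B)=0$. Alternatively and more intrinsically, $\nu(B) = \iota_B\iota_u\beta = -\iota_u\iota_B\beta$, and $\iota_B\beta = \iota_B\iota_B\mu = 0$, so $\nu(B)=0$; I would use whichever phrasing matches the paper's conventions.

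Next, claim 2: compute $\L_B\nu = d\iota_B\nu + \iota_B d\nu$. The first term vanishes because $\iota_B\nu = \nu(B) = 0$ by claim 1, and the second vanishes because $\nu$ is closed. Hence $\L_B\nu = 0$.

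For claim 3 I would argue that $\beta$ has maximal rank $2$ on a $3$-manifold, so its kernel at each point is one-dimensional and spanned by $B$ (since $\iota_B\beta = 0$ and $B$ is non-vanishing). Because $\iota_B\nu = 0$ as well, $\nu$ annihilates the kernel of $\beta$, i.e. $\nu$ is "horizontal" with respect to the line field spanned by $B$; pairing this with the fact that $\beta\wedge\nu$ is a $3$-form on a $3$-manifold, one checks $\iota_B(\beta\wedge\nu) = (\iota_B\beta)\wedge\nu - \beta\wedge(\iota_B\nu) = 0$, and a $3$-form annihilated by the non-vanishing vector field $B$ must be zero. Thus $\beta\wedge\nu = 0$. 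Finally, the Hamiltonian addendum is immediate: if $\nu = -dH$ then $\L_B H = \iota_B dH = -\iota_B\nu = -\nu(B) = 0$ by claim 1, so $B$ is tangent to the level sets of $H$.

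I do not anticipate a serious obstacle here — the lemma is a short bookkeeping exercise in Cartan calculus, and is quoted from \cite{burby2021integrability}. The only point needing a little care is claim 3, where one must correctly invoke that $B$ spans the kernel of the maximal-rank form $\beta$ (so that non-vanishing of $B$ is genuinely used) rather than trying to manipulate $\beta\wedge\nu$ by brute force; framing it via $\iota_B(\beta\wedge\nu)=0$ plus non-vanishing of $B$ is the cleanest route and avoids choosing local coordinates.
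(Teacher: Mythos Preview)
Your proof is correct and standard; note that the paper itself does not supply a proof of this lemma but simply cites it from \cite{burby2021integrability}, so there is no in-paper argument to compare against. One small slip: in claim~3 the antiderivation rule gives $\iota_B(\beta\wedge\nu) = (\iota_B\beta)\wedge\nu + \beta\wedge(\iota_B\nu)$ with a plus sign (since $\beta$ has even degree), not the minus sign you wrote---but as both summands vanish this is harmless.
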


\cref{lem:presym} has the following natural converse which follows immediately from \cref{lem:fluxpreservingIFFuexists}.
\begin{lemma}\label{lem:PresympCriteria}
    If $\nu$ is a closed 1-form with $\nu(B) = 0$, then there exists $u$ such that $(u,\nu)$ is a presymplectic pair.
\end{lemma}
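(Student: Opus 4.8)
The plan is to reduce \cref{lem:PresympCriteria} directly to \cref{lem:fluxpreservingIFFuexists}, which was already established. The statement to prove is: if $\nu$ is closed and $\nu(B) = 0$, then there exists $u$ with $\iota_u \beta = \nu$, i.e. $\iota_u \iota_B \mu = \nu$. First I would observe that the hypotheses $\nu$ closed and $\nu(B) = 0$ together with $\L_B \mu = 0$ are exactly the defining conditions of a flux system $(B,\nu,\mu)$ in the sense of \cref{def:fluxsystem}; here we are already working in the setting where $B$ is non-vanishing and divergence-free with respect to $\mu$.

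The key step is then to produce an auxiliary 1-form $\eta$ with $\eta(B) > 0$ so that \cref{lem:fluxpreservingIFFuexists} can be invoked. Such an $\eta$ always exists for a non-vanishing $B$: locally one can take, in a chart where $B = \partial_{x^1}$, the coordinate 1-form $dx^1$, and then patch these together with a partition of unity subordinate to a cover by such flow-box charts; positivity of $\eta(B)$ is preserved under convex combinations since each local piece evaluates to $1$ on $B$. (Alternatively, pick any Riemannian metric $g$ on $M$ and set $\eta = g(B, \cdot)$, so that $\eta(B) = \|B\|_g^2 > 0$.) Note that no condition like $d\eta \wedge \nu = 0$ is needed — that adaptedness hypothesis is only used for the stronger conclusions of \cref{thm:uexistsGivenEta}, whereas \cref{lem:fluxpreservingIFFuexists} requires only $\eta(B) > 0$.

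With $\eta$ in hand, \cref{lem:fluxpreservingIFFuexists} applied to the flux system $(B,\nu,\mu)$ yields a (unique, given the normalization $\eta(u)=0$) vector field $u$ with $\iota_u \iota_B \mu = \nu$, which is precisely $\iota_u \beta = \nu$. Since $\nu$ is closed by hypothesis, $(u,\nu)$ is a presymplectic pair, completing the argument.

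The only mild subtlety — and the thing I would be most careful about — is the existence of the global $\eta$ with $\eta(B)>0$; but since $B$ is assumed non-vanishing this is a standard partition-of-unity (or arbitrary-metric) construction, so there is really no substantial obstacle. The proof is essentially a one-line corollary of \cref{lem:fluxpreservingIFFuexists} once this $\eta$ is noted.
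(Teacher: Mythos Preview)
Your proposal is correct and matches the paper's approach exactly: the paper states that \cref{lem:PresympCriteria} ``follows immediately from \cref{lem:fluxpreservingIFFuexists},'' and you have simply unpacked this one-line reduction, including the standard observation (which the paper makes explicit elsewhere, via $\eta = B^\flat$ for any metric) that a non-vanishing $B$ admits a global 1-form with $\eta(B)>0$.
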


It is natural to compare symplectic vector fields to presymplectic vector fields. A symplectic 2-form $\omega$ is non-degenerate and consequently can only be defined on some even dimensional manifolds. The non-degeneracy of a symplectic form implies that, given any function $H\in C^\infty(M)$, there exists a unique Hamiltonian vector field $X_H$ defined by $\iota_{X_H} \omega= -dH$.
In the presymplectic setting, given a function $H$ on $M$, a Hamiltonian vector field need \emph{not} exist. Moreover, even if it does exist, the Hamiltonian vector field is not unique. Indeed, if $u$ is a Hamiltonian vector field for $\nu$ then any vector field of the form $u + h B$  is also Hamiltonian for $\nu$, for any function $h$ on $M$.

Perhaps more surprisingly, unlike the symplectic case, not all presymplectic vector fields are divergence-free.
\begin{lemma}[\cite{burby2021integrability}]\label{lem:divIsCommute}
    A presymplectic vector field $u$ is divergence-free if and only if $[u,B] = 0$.
\end{lemma}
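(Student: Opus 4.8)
The plan is to read this off directly from the master identity \eqref{eq:ConformalNoether}. Specialising that formula to $f = 1$, so that $\tB = B$ and $\tmu = \mu$, and using that the \emph{presymplectic} hypothesis provides a \emph{closed} 1-form $\nu = \iota_u\beta$, the term $d\iota_u\iota_B\mu = d\nu$ drops out and one is left with the single equation
\[ \iota_{[u,B]}\,\mu \;=\; -(\Div_\mu u)\,\iota_B\mu \;=\; -(\Div_\mu u)\,\beta . \]
The lemma is then immediate from this identity together with two non-degeneracy facts: first, $\mu$ is a volume form, so $X \mapsto \iota_X\mu$ is injective on vector fields; second, $\beta = \iota_B\mu$ is nowhere vanishing because $B$ has no zeroes (at a point $x$ with $B_x \neq 0$, writing $\mu_x = \beta_x \wedge \theta_x$ for a covector $\theta_x$ with $\theta_x(B_x) = 1$ exhibits $\beta_x \neq 0$; this is just the observation that $\beta$ is presymplectic of maximal rank).

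For the forward direction, if $\Div_\mu u = 0$ then the right-hand side of the displayed identity is zero, hence $\iota_{[u,B]}\mu = 0$, and injectivity of $X \mapsto \iota_X\mu$ forces $[u,B] = 0$. For the converse, if $[u,B] = 0$ then the left-hand side vanishes, so $(\Div_\mu u)\,\beta = 0$ pointwise; since $\beta$ is nowhere zero this gives $\Div_\mu u = 0$. I would lay the argument out in exactly these three short steps (the identity, then each implication).

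I do not expect a genuine obstacle here: essentially all the content is packaged in \eqref{eq:ConformalNoether}, and closedness of $\nu$ is precisely what makes the proof work. The only point that needs a moment's care is recording that $\iota_B\mu$ is nowhere vanishing, which is where the standing assumption that $B$ is non-vanishing (equivalently, that $(M,\beta)$ really is presymplectic) is used; without it, the converse implication would only give $\Div_\mu u = 0$ away from the zero set of $B$.
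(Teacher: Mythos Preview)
Your proof is correct. The paper does not actually supply its own proof of this lemma --- it is simply cited from \cite{burby2021integrability} --- so there is nothing to compare against line by line. That said, your derivation is precisely in the spirit of the paper: it specialises the identity \eqref{eq:ConformalNoether} (which the paper derives explicitly for this kind of use) to $f=1$, uses the presymplectic hypothesis to kill the $d\nu$ term, and then invokes non-degeneracy of $\mu$ and non-vanishing of $\beta$ for the two implications. This is the natural argument given the paper's setup, and your care in noting that the converse direction genuinely requires $B$ to be non-vanishing is apt.
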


The consequence of \cref{lem:PresympCriteria,lem:divIsCommute} is that one must be careful when trying to translate well-known results and theory from symplectic geometry to presymplectic geometry. In particular, as we will see in the proceeding sections, one needs additional assumptions in order to provide a Noether theorem for presymplectic vector fields, and consequently, divergence-free vector fields.

\subsubsection{Converse Conformal Noether Theorem}

In this section we establish a second proof of \cref{thm:uexistsGivenEta} from the perspective of presymplectic geometry. Before doing so, we discuss an initial approach to the Converse Conformal Noether ``Theorem" described in \cref{fwdNoether} from this perspective. 

Let $\nu$ be a closed 1-form such that $\nu(B) = 0$. Then \cref{lem:PresympCriteria} enables us to fix a vector field $u_0$ so that $(u_0,\nu)$ is a presymplectic pair: $\iota_{u_0} \beta = \nu$. Then, there exists a divergence-free vector field $u$ satisfying $\iota_u\beta = \nu$ if and only if there exists a function $h \in C^{\infty}(M)$ such that the vector field $u = u_0 + h B$ satisfies $\Div_{\mu} u = 0$ or equivalently that $h$ satisfies the cohomological equation
\[B(h) = -\Div_{\mu} u_0.\]
However, in general, only special vector fields are able to solve given cohomological equations \cite{Kocsard2009,newcombMagneticDifferentialEquations1959}. In fact, \cref{counterexample} in \cref{sec:naiveNoetherCounterExample} shows that such a $h$ need not exist. On the other hand, we have that
\[\iota_{[u,B]}\beta = \L_u \iota_B \beta - \iota_B \L_u \beta = - \iota_B d \nu = 0. \]
so $[u,B] = \kappa B$ for some function $\kappa$ and $u$ being a conformal symmetry is equivalent to another cohomological equation. This line of approach requires a deeper understanding of the vector field $B$ and the associated cohomological equation to make any conclusion.

An alternative approach comes delivered in a more abstract package. For the sake of the exposition, assume that $\nu = d\psi$ is exact. The journey begins with $\eta$-embeddings of the presymplectic manifolds.
\begin{theorem}[$\eta$-embedding {\cite{burby2021integrability}}]\label{thm:eta-embedding}
        Let $(M,\beta)$ be a presymplectic manifold. Suppose there exists a 1-form $\eta$ on $M$ such that $\beta\wedge \eta$ is a volume form. Let $\tilde{M} = M\times \R \ni (z,v)$, define the projection $\pi:\tilde{M} \to M$ and take $\tilde{\beta} = \pi^*\beta,\, \tilde{\eta} = \pi^*\eta$. Then, $(\tilde{M},\omega)$ is a symplectic manifold with
        \begin{equation*}
            \omega = \tilde{\beta} + d(v \tilde{\eta}).
        \end{equation*}
        Moreover, $M$ is coisotropically embedded as $v = 0$ in $\tilde{M}$.
\end{theorem}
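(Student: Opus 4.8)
The plan is to verify directly that $\omega = \tilde{\beta} + d(v\,\tilde\eta)$ is a symplectic form on $\tilde M = M\times\R$ and that $M \hookrightarrow \tilde M$ as $\{v=0\}$ is coisotropic. Closedness is immediate: $d\omega = d\tilde\beta + d^2(v\tilde\eta) = \pi^*(d\beta) + 0 = 0$ since $\beta$ is presymplectic. The real content is non-degeneracy. Expanding, $\omega = \tilde\beta + dv\wedge\tilde\eta + v\,d\tilde\eta = \tilde\beta + v\pi^*(d\eta) + dv\wedge\tilde\eta$. I would work at a point $(z,v)$ and use the splitting $T_{(z,v)}\tilde M \cong T_zM \oplus \R\partial_v$. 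First note $\dim M = 3$ (odd), so $\beta$ of maximal rank means $\ker\beta|_z$ is one-dimensional, spanned by $B|_z$ (since $\iota_B\beta = \iota_B\iota_B\mu = 0$); and the hypothesis that $\beta\wedge\eta$ is a volume form says precisely $\eta(B) \neq 0$ everywhere, i.e. $\eta$ is non-degenerate on $\ker\beta|_z$. So $\dim\tilde M = 4$ and I must show $\omega^2$ is nowhere zero, equivalently that $\iota_X\omega = 0$ forces $X = 0$ for $X \in T_{(z,v)}\tilde M$.

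The key step: write $X = W + c\,\partial_v$ with $W \in T_zM$, $c\in\R$. Then $\iota_X\omega = \iota_W\tilde\beta + v\,\iota_W\pi^*(d\eta) + c\,\tilde\eta - \eta(W)\,dv$ (using $\iota_{\partial_v}(dv\wedge\tilde\eta) = \tilde\eta$ and $\iota_W(dv\wedge\tilde\eta) = -\eta(W)dv$, since $\tilde\eta, \tilde\beta, d\tilde\eta$ annihilate $\partial_v$). Setting this to zero and separating the $dv$-component from the rest: $\eta(W) = 0$, and $\iota_W(\tilde\beta + v\,\pi^*d\eta) + c\,\tilde\eta = 0$ as a $1$-form on $M$ (pulled back). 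Contract that last equation with $B$: since $\iota_B\beta = 0$ and $\iota_B\iota_W\tilde\beta = -\iota_W\iota_B\tilde\beta = 0$, and using $\iota_B\iota_W\pi^*d\eta = d\eta(B,W)$, one gets $v\,d\eta(B,W) + c\,\eta(B) = 0$; but I can instead contract with $W$ itself in the $\pi^*d\eta$ term, or more cleanly: the equation $\iota_W\tilde\beta = -v\,\iota_W\pi^*d\eta - c\tilde\eta$ shows $\iota_W\tilde\beta$ is a combination of $\pi^*d\eta$-type and $\tilde\eta$ terms; evaluating on $B$ gives $0 = -v\,d\eta(B,W) - c\,\eta(B)$. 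To close the argument I pair with a second vector. The clean route is: from $\eta(W)=0$ and the fact that $\ker\beta = \R B$ with $\eta(B)\neq 0$, decompose $W = W' + \lambda B$; then $\eta(W)=0$ forces $\lambda\eta(B) + \eta(W') = 0$. Choosing the complement so that $\eta|_{W'} $ behaves controllably, and then using that $\beta$ restricted to a complement of $\R B$ is non-degenerate, I extract $W' = 0$, $\lambda = 0$, $c = 0$ in turn — the point is that $dv\wedge\tilde\eta$ supplies exactly the missing non-degeneracy in the $B$ and $\partial_v$ directions that $\tilde\beta$ lacks. I expect this linear-algebra bookkeeping — tracking how $dv\wedge\tilde\eta$ and the curvature term $v\,\pi^*d\eta$ interact on the $2+1+1$ decomposition — to be the main obstacle; everything else is formal.

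For the coisotropic claim, the embedding $\iota: M\to\tilde M$, $z\mapsto(z,0)$ satisfies $\iota^*\omega = \iota^*\tilde\beta + \iota^*(dv\wedge\tilde\eta) + 0 = \beta + 0 = \beta$, since $v\equiv 0$ and $dv\equiv 0$ on the image. So the pullback of $\omega$ to $M$ is $\beta$, whose kernel $\R B$ has dimension $1 = \dim\tilde M - \dim M$; hence $TM^{\omega}\subseteq TM$ inside $T\tilde M|_M$, which is the definition of $M$ being coisotropic (equivalently, the characteristic distribution $\ker(\iota^*\omega) = \ker\beta$ has the expected rank equal to the codimension). This last part is a one-line check once non-degeneracy of $\omega$ is in hand. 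I would also remark that the characteristic foliation of the coisotropic $M$ is generated by $B$, tying the construction back to the presymplectic dynamics and motivating why a symmetry $u$ for $\tilde B$ on $M$ should descend from a genuine symplectic Noether symmetry upstairs.
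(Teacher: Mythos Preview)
The paper does not supply its own proof of this statement; it is quoted from \cite{burby2021integrability} and used as a black box, so there is no in-paper argument to compare against.

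On its own merits, your approach is correct at $v=0$: closedness is immediate, your decomposition $T_{(z,v)}\tilde M \cong T_zM \oplus \R\partial_v$ together with $\eta(B)\neq 0$ does yield non-degeneracy along $\{v=0\}$, and the coisotropy check via $\iota^*\omega=\beta$ is clean and right. However, there is a genuine gap for $v\neq 0$ which is not merely ``bookkeeping''. Computing the top power directly,
\[
\omega^2 = 2\,dv\wedge\pi^*(\beta\wedge\eta) + 2v\,dv\wedge\pi^*(\eta\wedge d\eta),
\]
so non-degeneracy at $(z,v_0)$ is equivalent to $(\beta\wedge\eta + v_0\,\eta\wedge d\eta)|_z\neq 0$. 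For general $\eta$ this can fail: on $\T^3$ with $\beta = dx\wedge dy$ and $\eta = dz + \sin x\,dy$ one has $\beta\wedge\eta = dx\wedge dy\wedge dz$ but $\beta\wedge\eta + v_0\,\eta\wedge d\eta = (1+v_0\cos x)\,dx\wedge dy\wedge dz$, which vanishes at $x=0$, $v_0=-1$. Thus the theorem as literally stated --- symplectic on all of $M\times\R$ --- is slightly loose; the correct reading (and all that the paper actually uses in \cref{thm:integrableEmbedding}, where only the restriction to $v=0$ enters) is that $\omega$ is symplectic on a neighbourhood of $\{v=0\}$. Your argument already proves this neighbourhood version once you observe that non-degeneracy at $v=0$ is an open condition; you should just not expect the linear-algebra step to close globally in $v$, because in general it does not.
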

Note that on a presymplectic manifold $(M,\beta)$, a metric $g$ provides the required 1-form by setting $\eta := B^\flat$. There is of course some choice in $\eta$, and we say any specific embedding is an \emph{$\eta$-embedding of $(M,\beta)$}.

The $\eta$-embedding provides a mechanism to apply the tools of symplectic geometry to that of presymplectic geometry. In order to find a conformal symmetry, we will need to not only embed the presymplectic manifold associated to $B$ into a symplectic manifold, but do so in such a way as to have the lift of $B$ to $\tilde{M}$ be part of an integrable system (in the sense of Liouville). With a little more structure imposed on $\eta$, this is indeed possible.
\begin{theorem}\label{thm:integrableEmbedding}
        Assume $\eta$ is a 1-form adapted to the flux system $(B,d\psi,\mu)$. Let $(M\times\R,\omega)$ be the symplectic manifold obtained from the $\eta$-embedding of $(M,\beta)$, take coordinates $(z,v)\in M\times\R$ and let $\tilde{\psi} = \pi^*\psi$. Then $H =  v$ and $\tilde{\psi}$ form an integrable system on $M\times\R$ with associated Hamiltonian vector fields respectively satisfying
            \begin{equation*}
                X_H|_{v=0} = \frac{1}{\eta(B)}B, \quad X_{\tilde{\psi}}|_{v=0} =  u - \frac{\eta(u)}{\eta(B)} B,
            \end{equation*}
        for any vector field $u$ such that $\iota_u \beta = -d\psi$.
\end{theorem}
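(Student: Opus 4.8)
The plan is to verify directly that $H = v$ and $\tilde\psi = \pi^*\psi$ Poisson-commute on the symplectic manifold $(M\times\R,\omega)$, are functionally independent on a dense set, and that their Hamiltonian vector fields restrict along $v=0$ to the claimed expressions. The first observation is that the symplectic form from \cref{thm:eta-embedding} is $\omega = \tilde\beta + dv\wedge\tilde\eta + v\,d\tilde\eta$, where $\tilde\beta = \pi^*\beta$, $\tilde\eta = \pi^*\eta$, and $\beta = \iota_B\mu$. Since $\beta\wedge\eta$ is a volume form on $M$ (which holds because $\eta(B) = \beta\wedge\eta / \mu > 0$, using $\iota_B(\beta\wedge\eta) = \eta(B)\iota_B\mu\cdot$\dots more carefully $\iota_B\beta = 0$ so $\iota_B(\beta\wedge\eta) = \eta(B)\beta$, hence $\beta\wedge\eta = \eta(B)\mu$), the embedding hypothesis of \cref{thm:eta-embedding} is satisfied and $\omega$ is indeed symplectic.

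First I would compute $X_H$ for $H = v$. We need $\iota_{X_H}\omega = -dv$. Writing a candidate $X_H = \tB' + c\,\partial_v$ with $\tB'$ the horizontal lift of some vector field on $M$ and $c$ a function, one pairs against the three pieces of $\omega$. The term $\iota_{X_H}(dv\wedge\tilde\eta) = c\,\tilde\eta - \tilde\eta(\tB')\,dv$; the term $\iota_{X_H}\tilde\beta = \iota_{\tB'}\tilde\beta$ (horizontal); the term $v\,\iota_{X_H}d\tilde\eta$ vanishes at $v=0$. Matching $dv$-components forces $\tilde\eta(\tB') = 1$, and matching horizontal components at $v=0$ gives $\iota_{\tB'}\beta = -c\,\eta$. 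Contracting the last equation with $B$ yields $0 = -c\,\eta(B)$, so $c = 0$ on $\{v=0\}$, whence $\iota_{\tB'}\beta = 0$; since $\beta$ has one-dimensional kernel spanned by $B$, $\tB' = \lambda B$ for a function $\lambda$, and $\tilde\eta(\tB') = 1$ forces $\lambda = 1/\eta(B)$. This gives $X_H|_{v=0} = B/\eta(B) = \tB$, matching the claim. Next I would similarly solve $\iota_{X_{\tilde\psi}}\omega = -d\tilde\psi = -\pi^*d\psi$; here the $dv$-component of the right side is zero, so matching forces $\tilde\eta(X_{\tilde\psi}$'s horizontal part$)$-type conditions together with $\iota_{(\cdot)}\beta = -d\psi + (\text{multiple of }\eta)\cdot$ corrections, and one reads off that the horizontal part at $v=0$ is precisely $u - \tfrac{\eta(u)}{\eta(B)}B$ for any $u$ with $\iota_u\beta = -d\psi$ — note this combination is exactly the ambiguity-killing projection (it is the unique such vector field $v$ with $\eta(v)=0$, consistent with \cref{lem:fluxpreservingIFFuexists}).

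Then I would check involutivity: $\{H,\tilde\psi\} = \omega(X_H,X_{\tilde\psi}) = -dv(X_{\tilde\psi}) = 0$ since $X_{\tilde\psi}$ has no $\partial_v$-component along $v=0$ — but one needs this for all $v$, not just $v=0$; the cleanest route is to note $\{v,\tilde\psi\} = X_{\tilde\psi}(v)$ and show $X_{\tilde\psi}$ is everywhere tangent to the level sets of $v$, which follows because $\tilde\psi$ is pulled back from $M$ and $v\,d\tilde\eta$ is the only $v$-dependent term — alternatively use that $d\psi\wedge d\eta = 0$ (the adapted condition $d\eta\wedge\nu=0$) to kill the cross terms globally. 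Finally, functional independence of $dv$ and $d\tilde\psi$ holds wherever $d\psi\neq 0$, and the two functions are independent of each other since $dv$ is transverse to all horizontal forms; on a $2n$-dimensional $\tilde M$ with $n=2$ we need two independent commuting functions, which we have, giving a Liouville-integrable system on the open dense set where $\nu\neq 0$. I expect the main obstacle to be the bookkeeping in establishing $\{H,\tilde\psi\}=0$ globally (not merely on $v=0$), where the adapted condition $d\eta\wedge\nu=0$ must be invoked to annihilate the $v\,d\tilde\eta$ contribution in the Poisson bracket computation; the restriction formulas for the Hamiltonian vector fields are comparatively routine pairing computations once the kernel structure of $\beta$ is used.
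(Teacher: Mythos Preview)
Your proposal is correct and follows essentially the same plan as the paper: verify the Hamiltonian vector fields at $v=0$ by direct pairing with $\omega = \tilde\beta + dv\wedge\tilde\eta + v\,d\tilde\eta$, then establish $\{H,\tilde\psi\}=0$ globally using the adapted condition $d\eta\wedge d\psi = 0$. The only notable difference is in the involutivity step: the paper shows $\omega$ pulls back to zero on each common level set $\{(\tilde\psi,H)=(\tilde\psi_0,H_0)\}$ (since both $\tilde\beta$ and $d\tilde\eta$ vanish when restricted to a $\psi$-surface), whereas you propose computing the $\partial_v$-component of $X_{\tilde\psi}$ directly and showing it vanishes for all $v$; both arguments hinge on the same adapted condition at the same point, so this is a cosmetic rather than substantive difference.
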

\begin{proof}
    The proof is identical to that of \cite[Thm.~4.11]{burby2021integrability} which has stronger hypotheses to \cref{thm:integrableEmbedding}. It is enough to observe that the original proof follows with the weaker hypotheses. The proof is included here for completeness.
    
    By \cref{lem:PresympCriteria}, there always exists some $u$ such that $\iota_u\beta = -d\psi$, although such a $u$ is not guaranteed to be divergence-free. 
    We begin by computing the Hamiltonian vector fields associated with $H$ and $\tilde{\psi}$ on $v=0$. They are defined through Hamilton's equations:
        \[ \iota_{X_H}\omega = - dv,\quad \iota_{X_{\tilde{\psi}}}\omega = -d\tilde{\psi}. \]
    A calculation reveals that $X_H|_{v=0} = \frac{1}{\eta(B)}B$ and $X_{\tilde{\psi}}|_{v=0}=  u - \frac{\eta(u)}{\eta(B)} B$, as desired. 
    
    To show that $(H,\tilde{\psi})$ form an integrable system on $M\times\R$ we require the Poisson bracket $\{\tilde{\psi},H\} = 0$. A quick way to see this is to pull back $\omega$ to the common level set $(\tilde{\psi},H) = (\tilde{\psi}_0,H_0)$. The condition $H = H_0$ implies $v = H_0$. The pullback of $\omega$ to the level set is therefore
    \[ 
          \pi_{\tilde{\psi}_0,H_0}^* (\tilde{\beta} + d(v \tilde{\eta})) = 0 +\pi_{\tilde{\psi}_0}^*d(H_0 \tilde{\eta}) = H_0\pi_{\tilde{\psi}_0}^*d\tilde{\eta} = 0,
    \]
    since $\tilde{\beta} = \pi^*\beta$ and $d\tilde{\eta} = \pi^*d\eta$ both vanish when pulled back to a $\tilde{\psi}$-surface. It follows that $\omega$ vanishes on the common level sets, and hence, that $\{\tilde{\psi},v\} := \iota_{X_H}\iota_{X_{\tilde{\psi}}} \omega = 0$.
\end{proof}

By $\eta$-embedding the presymplectic manifold generated from the field $B$ of a flux system $(B,d\psi,\mu)$, \cref{thm:integrableEmbedding} provides an integrable system (in the sense of Liouville) given by functions $u,\tilde{\psi}$ and associated Hamiltonian vector fields $X_v, X_{\tilde{\psi}}$. As $u,\tilde{\psi}$ Poisson commute, it follows that $X_v, X_{\tilde{\psi}}$ commute. In particular the vector fields
\[ \tB := X_v|_{v=0} = \frac{1}{\eta(B)} B,\qquad \tilde{u} := X_\rho|_{u=0} = u - \frac{\eta(u)}{\eta(B)} B \]
will commute. In fact $\iota_{\tilde{u}}\beta = -d\psi$. We thus have what we want: a conformal symmetry $\tilde{u}$ of $B$. Hence, we have provided an alternative proof of \cref{thm:uexistsGivenEta} for the case $\nu$ is exact. \cref{thm:integrableEmbedding} can be adjusted to prove the case where $\nu$ is no longer exact.

\subsection{A counter-example to the converse naive Noether theorem}\label{sec:naiveNoetherCounterExample}

To understand the counter-example, one must bear in mind the following elementary results in the study of Diophantine vector fields (for a definition of Diophantine vector fields see \cite{Kocsard2009}). The first statement follows directly from \cite[Proposition 23 and 24]{perrella2022rectifiability} while the second statement is in \cite[Proposition 2.6]{Kocsard2009}.

\begin{proposition}\label{solvabilityofcohomological}
On the 2-torus $S = (\R/\mathbb{Z})^2$, the following holds.
\begin{enumerate}
    \item Let $0 < f \in C^{\infty}(S)$ and consider the vector field
    \[X = f(a\partial_x + b\partial_y)\]
    where $(a,b) \in \R^2$. If $(a,b)$ is not Diophantine, then $X$ is not smoothly conjugate to a Diophantine vector field. That is, there does not exist a diffeomorphism $\varphi : S \to S$ such that    
    \[\varphi_* X = c\partial_x + d\partial_y\]
    for any Diophantine vector $(c,d) \in \R^2$.
    \item A vector field $X$ on $S$ is smoothly conjugate to a Diophantine vector field, if and only if, for every $h \in C^{\infty}(S)$, there exists a $g \in C^{\infty}(S)$ and a constant $c \in \R$ solving the cohomological equation
    \[X(g) = h-c.\]
\end{enumerate}
\end{proposition}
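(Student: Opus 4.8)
The two parts have different flavours; my plan is to treat them separately and to lean on the literature for the single genuinely hard input.

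\textbf{Part (1).} The plan is to use the \emph{rotation vector} (Schwartzman asymptotic cycle) in $H_1(S;\R)\cong\R^2$ as a smooth-conjugacy invariant. First I would record that $X_0:=a\partial_x+b\partial_y$ preserves the Lebesgue probability measure and has rotation vector exactly $(a,b)$, while the rescaled field $X=fX_0$ preserves the probability measure $\mu:=\bigl(\int_S f^{-1}\,dx\,dy\bigr)^{-1}f^{-1}\,dx\,dy$ and has rotation vector $\lambda(a,b)$ with $\lambda:=\bigl(\int_S f^{-1}\,dx\,dy\bigr)^{-1}>0$ --- so rescaling by a positive function only dilates the rotation vector. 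Second, for any diffeomorphism $\varphi\colon S\to S$ the pushforward $\varphi_*X$ preserves $\varphi_*\mu$ and has rotation vector $A\cdot\lambda(a,b)$, where $A\in\mathrm{GL}(2,\Z)$ is the action of $\varphi$ on $H_1(S;\Z)$; this is the change-of-variables identity $\int_S\alpha(\varphi_*X)\,d(\varphi_*\mu)=\int_S(\varphi^*\alpha)(X)\,d\mu$ for closed $1$-forms $\alpha$, together with $\varphi^*=A^{\mathsf{T}}$ on $H^1$. Third, if $\varphi_*X$ equalled a Diophantine constant field $c\partial_x+d\partial_y$, then --- that field being uniquely ergodic, since Diophantine forces irrational slope --- its unique invariant probability measure is Lebesgue and its rotation vector is $(c,d)$, so comparing gives $(a,b)=\lambda^{-1}A^{-1}(c,d)$. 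Finally I would note that the Diophantine property of a nonzero vector in $\R^2$ is invariant under multiplication by a positive scalar and under the $\mathrm{GL}(2,\Z)$-action (in the defining inequality $|ck_1+dk_2|\ge C|k|^{-\tau}$ one substitutes $k\mapsto A^{\mathsf{T}}k$, which permutes $\Z^2\setminus\{0\}$ with comparable norms), so $(a,b)$ would be forced to be Diophantine, contradicting the hypothesis. This is also precisely the content of \cite[Prop.~23--24]{perrella2022rectifiability}, which I would cite.

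\textbf{Part (2), the easy direction.} If $\varphi_*X=Y=c\partial_x+d\partial_y$ is Diophantine and $h\in C^\infty(S)$ is given, the plan is to transport the equation through $\varphi$: set $\tilde h:=h\circ\varphi^{-1}$, let $c_0$ be the mean of $\tilde h$, and solve $Y(\tilde g)=\tilde h-c_0$ by Fourier series with $\widehat{\tilde g}_k=\widehat{\tilde h}_k/\bigl(2\pi i(ck_1+dk_2)\bigr)$ for $k\ne 0$. The Diophantine bound on the small divisors and the rapid decay of $\widehat{\tilde h}_k$ make $\tilde g$ smooth, and then $g:=\tilde g\circ\varphi$ solves $X(g)=h-c_0$ since $X(u\circ\varphi)=(Yu)\circ\varphi$ for every $u$.

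\textbf{Part (2), the hard direction, and the main obstacle.} The substantive claim is that solvability of the cohomological equation for \emph{every} $h$ forces $X$ to be smoothly conjugate to a Diophantine constant field; this I would not reprove, as it is \cite[Prop.~2.6]{Kocsard2009} and rests on Herman--Yoccoz linearization. Its rough shape: solvability first forces $X$ to be nowhere zero (a zero $p$ would force $h(p)=c$ for all $h$), hence $X$ has a global smooth cross-section and a well-defined first-return circle diffeomorphism; solvability of the cohomological equation then pins the rotation number of that diffeomorphism to be Diophantine and the diffeomorphism itself to be smoothly conjugate to a rotation, which linearizes $X$. This linearization step is the crux, and everything around it is bookkeeping; accordingly, in the write-up I would simply invoke \cite[Prop.~2.6]{Kocsard2009} and \cite[Prop.~23--24]{perrella2022rectifiability}, optionally reproducing the short rotation-vector and Fourier arguments above for the reader's convenience.
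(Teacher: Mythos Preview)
Your proposal is correct and matches the paper's own treatment: the paper does not prove this proposition at all but simply records that part~(1) follows from \cite[Prop.~23--24]{perrella2022rectifiability} and part~(2) is \cite[Prop.~2.6]{Kocsard2009}, exactly the two references you invoke. Your added sketches --- the rotation-vector/$\mathrm{GL}(2,\Z)$ invariance argument for~(1) and the Fourier computation for the easy direction of~(2) --- go beyond what the paper supplies and are sound.
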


With this in mind, we now consider the example.

\begin{example}\label{counterexample}
First consider the 2-torus $S = (\R/\mathbb{Z})^2$. With the natural coordinates $x,y$, consider the vector fields
\[X_0 = a \partial_x + b\partial_y, \qquad Y_0 = c \partial_x + d\partial_y\]
where $(a,b) \in \R^2$ be linearly independent from $\mathbb{Z}^2$ but not Diophantine and $(c,d)$ is Diophantine and linearly independent from $(a,b)$. We will show that for some $0<f \in C^{\infty}(S)$, the only vector fields commuting with $X = fX_0$ are constant multiples of $X$. 

For the time being, let $f \in C^{\infty}(S)$ be arbitrary and set $X = fX_0$. Any vector field $Z$ may be written, for some $g,h \in C^{\infty}(S)$, as
\[Z = g X + h Y_0.\]
In this way, we obtain
\begin{align*}
[X,Z] &= [X,gX+hY_0]\\
&= [X,gX]+[fX_0,hY_0]\\
&= (f X(g)-hY_0(f))X_0 + f X_0(h)Y_0. 
\end{align*}
Hence, if $[X,Z] = 0$, then $X_0(h) = 0$ and since $X_0$ has dense orbits in $S$, $h$ must be constant. Moreover, we have the cohomological equation
\begin{equation}\label{eq:cohomexample}
X(g) = Y_0(h \ln f)
\end{equation}
Applying the first and second statement of \cref{solvabilityofcohomological} to $X$, there exists $H \in C^{\infty}(S)$ such that, for any function $g \in C^{\infty}$ and $c \in \R$ 
\[X(g) \neq H-c.\]
Fix such a $H$. Suppose that $h \neq 0$. Applying the second statement of \cref{solvabilityofcohomological} to $Y_0$, there exists function $f$ and a constant $c'$ such that
\[Y_0(h \ln f) = H -c'.\]
Fixing this choice of $f$, \eqref{eq:cohomexample} is false. Therefore, we must have that $h=0$ and so $X(g) = 0$. This implies as before that $g$ is constant and $Z = g X$, a constant multiple of $X$. Hence, our claim is true.

With this in mind, we now consider the manifold with (possibly empty) boundary $M = S \times I$ where $I$ is some interval. With the natural coordinates $x,y,z$, consider the flux system $(B,\nu,\mu)$ where
\[B = F(a \partial_x + b\partial_y), \qquad \nu = dz, \qquad \mu = F^{-1}dx\wedge dy \wedge dz\]
and $F(x,y,z) = f(x,y)$ where $f$ is as before. Then, the previous paragraph shows that $B$ is not rectifiable, not even locally about a single constant-$z$ torus. In particular, there cannot exist a vector field $u$ linearly independent from $B$ satisfying
\[[u,B] = 0,\qquad \nu(u) = 0\]
let alone satisfying $\iota_u\iota_B\mu = \nu$. 
\end{example}

\subsection{Contexts in which an adapted 1-form is guaranteed globally} \label{sec:whenEtaIsGuaranteed}

With some additional assumptions on a flux system, an adapted 1-form can be guaranteed to exist globally. Fix a 3-manifold $M$ with (possibly empty) boundary.

\begin{proposition}\label{prop:etaExists}
Let $B$ be a vector field on $M$. Let $\eta$ be a 1-form such that
    \[ \iota_B d\eta = 0, \qquad  \eta(B) > 0.\]
If $(B,\nu,\mu)$ is a flux system, then $\eta$ is a 1-form adapted to $(B,\nu,\mu)$.
\end{proposition}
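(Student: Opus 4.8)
The plan is to unwind the two defining conditions of an adapted $1$-form. The condition $\eta(B) > 0$ is already one of the hypotheses, so it transfers verbatim; in particular $B$ is nowhere vanishing. It remains only to verify $d\eta \wedge \nu = 0$. Since this is a $3$-form on the $3$-manifold $M$, and since a $3$-form on a $3$-manifold is annihilated by no nonzero tangent vector unless it vanishes, it suffices to check $\iota_B(d\eta \wedge \nu) = 0$ pointwise, using that $B$ is nonvanishing.

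The computation I would carry out is the derivation (anti-Leibniz) rule for interior multiplication: for the $2$-form $d\eta$ and the $1$-form $\nu$,
\[
\iota_B(d\eta \wedge \nu) = (\iota_B d\eta)\wedge \nu + (-1)^2\, d\eta \wedge (\iota_B \nu) = (\iota_B d\eta)\wedge \nu + \nu(B)\, d\eta .
\]
Now $\iota_B d\eta = 0$ is a hypothesis of the proposition, and $\nu(B) = 0$ because $(B,\nu,\mu)$ is a flux system (\cref{def:fluxsystem}). Hence the right-hand side vanishes, so $\iota_B(d\eta \wedge \nu) = 0$, and therefore $d\eta \wedge \nu = 0$ everywhere. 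Combined with $\eta(B) > 0$, this is exactly \cref{def:adapted_one-form}.

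There is essentially no obstacle here: the argument is a single line of exterior-algebra bookkeeping. The only point that warrants a sentence of justification is the reduction step — that a $3$-form $\omega$ on a $3$-manifold with $\iota_v \omega = 0$ for some nonzero $v$ must in fact be zero at that point — which is where the nonvanishing of $B$ (guaranteed by $\eta(B)>0$) is used. I would state that linear-algebra fact explicitly and then close the proof.
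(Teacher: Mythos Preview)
Your proof is correct and essentially identical to the paper's: both reduce $d\eta \wedge \nu = 0$ to $\iota_B(d\eta \wedge \nu) = 0$ using that $d\eta \wedge \nu$ is a top-form and $B$ is nonvanishing, then expand via the anti-derivation rule and kill the two terms with $\iota_B d\eta = 0$ and $\nu(B) = 0$. Your write-up is slightly more explicit about the linear-algebra justification for the reduction step, but the argument is the same.
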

\begin{proof}
    It suffices to note that $d\eta \wedge \nu$ is a top-form on $M$ and hence $d\eta\wedge \nu=0$ is equivalent to $\iota_B(d\eta \wedge \nu) = 0$. Moreover,
    \[ \iota_B(d\eta \wedge \nu)  =  (\iota_B d\eta) \wedge \nu + d\eta \wedge \iota_B\nu = 0\]
\end{proof}

There are some particular cases of \cref{prop:etaExists} of interest. If $B$ is vector field and $\eta$ is a closed 1-form such that $\eta(B) > 0$, then the hypothesis of \cref{prop:etaExists} are satisfied. If $M$ is closed, any vector field $B$ with a global Poincar\'{e} section possesses such an $\eta$ by the work of \cite{SchwartzmanGlobal}. In plasma physics, magnetic fields $B$ in a vacuum on an oriented $M$ satisfy $d B^\flat = 0$ with respect to the metric $g$ placed on $M$. Hence, all non-vanishing vacuum fields have the closed 1-form $\eta = B^\flat$ satisfying the assumptions of \cref{prop:etaExists}.

Alternatively, if $B$ is a vector field and $\eta$ is a 1-form such that $d \eta$ is non-vanishing, then $B$ and $\eta$ satisfy the hypothesis of \cref{prop:etaExists} if and only if $\eta$ is a contact form and $B$ is a Reeb-like vector field of $\eta$. In this context, $B$ is a Reeb-like field if there exists a non-vanishing function $f \in C^\infty(M)$ such that $B/f$ is the Reeb vector field of $\eta$. For such fields, $B$ is divergence-free with respect to $\mu = \frac{1}{\eta(B)}\eta \wedge d\eta$. The following is an example of a contact system giving rise to an adapted 1-form.

\begin{example}\label{ex:T3Twist}
    Set $M = (\R / 2\pi \mathbb{Z})^3$ with periodic coordinates $x,y,z : (\R/ 2\pi \mathbb{Z})^3 \to \R/ 2\pi \mathbb{Z}$. Consider the contact form
    \begin{equation*}
        \eta = \sin z dx + \cos z dy    
    \end{equation*}
    whereby $\eta \wedge d\eta = dx\wedge dy\wedge dz$, the standard volume form on $M$. The associated Reeb vector field is given by
    \begin{align*}
        B &= \sin z \partial_x + \cos z \partial_y.
    \end{align*}
    With $\nu = dz$, we get a flux system $(B,\nu,\mu)$ and $\eta$ an adapted 1-form.
\end{example}

It is worth highlighting that \cref{ex:T3Twist} gives an example of a flux system such that $B$ has no global Poincar{\'e} section, and $\eta$ is not closed. In fact, the adapted 1-forms for the flux systems which arise from contact forms can never be closed in the case that $M$ is closed. This is because of the following elementary fact.
\begin{proposition}
Let $B$ be a Reeb-like vector field for a contact form $\eta$ on a manifold $M$. Then, $B$ has no closed Poincar{\'e} section. In particular, if $M$ is closed, $B$ has no closed 1-form $\kappa$ for which $\kappa(B) > 0$.
\end{proposition}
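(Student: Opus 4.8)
The plan is to establish the following slightly stronger statement, from which the proposition is immediate: a Reeb-like vector field $B$ admits no compact boundaryless hypersurface transverse to it. Since a closed Poincar\'e section is in particular such a hypersurface, this gives the first claim. Throughout I take $\dim M = 2n+1 \geq 3$ (the non-degenerate range for contact forms; the statement genuinely fails when $\dim M = 1$). Write $R = B/f$ for the Reeb field of $\eta$, so that $\iota_R d\eta = 0$ and $\eta(R) = 1$. Because $\eta(B) = f > 0$, the field $B$ never vanishes, and as $\ker d\eta|_x$ is one-dimensional and contains $B_x$ we have $\ker d\eta|_x = \R B_x = \R R_x$; in particular a hypersurface is transverse to $B$ if and only if it is transverse to $R$.

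The first step is a pointwise linear-algebra fact: if $i \colon \Sigma \hookrightarrow M$ is a hypersurface transverse to $R$, then $i^*d\eta$ is a symplectic form on $\Sigma$. Indeed, fix $x \in \Sigma$; transversality gives $T_xM = T_x\Sigma \oplus \R R_x$, so any nonzero $v \in T_x\Sigma$ lies outside $\ker d\eta|_x = \R R_x$. Choose $w' \in T_xM$ with $d\eta(v,w') \neq 0$ and split $w' = w + \lambda R_x$ with $w \in T_x\Sigma$; then $d\eta(v,w) = d\eta(v,w') - \lambda\, d\eta(v,R_x) = d\eta(v,w') \neq 0$, so $i^*d\eta$ is non-degenerate on the $2n$-dimensional space $T_x\Sigma$. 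Hence $(i^*d\eta)^n$ is a nowhere-zero top form on $\Sigma$.

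The contradiction is then a one-line application of Stokes. Since $i^*d\eta = d(i^*\eta)$ is exact, so is its top power, $(i^*d\eta)^n = d\!\left(i^*\eta \wedge (i^*d\eta)^{n-1}\right)$; if $\Sigma$ were compact and boundaryless we would get $\int_\Sigma (i^*d\eta)^n = 0$, contradicting that $(i^*d\eta)^n$ is a volume form. This proves the first assertion. For the second, suppose $M$ is closed and $\kappa$ is a closed $1$-form with $\kappa(B) > 0$. Then $\kappa \wedge (d\eta)^n$ is a volume form on $M$: using the splitting $T_xM = \R B_x \oplus \ker \eta_x$ one has $\iota_B (d\eta)^n = n(\iota_B d\eta)\wedge(d\eta)^{n-1} = 0$ while $(d\eta)^n$ restricts to a volume form on $\ker\eta_x$ and $\kappa(B) > 0$. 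But $d\kappa = 0$ and $(d\eta)^n = d\!\left(\eta \wedge (d\eta)^{n-1}\right)$ give $\kappa \wedge (d\eta)^n = -\,d\!\left(\kappa \wedge \eta \wedge (d\eta)^{n-1}\right)$, so $\int_M \kappa \wedge (d\eta)^n = 0$ by Stokes — a contradiction. (Conceptually this is the "in particular" it is advertised to be: on a closed manifold a closed $1$-form with $\kappa(B) > 0$ can be perturbed, within this open condition, to one with integral periods, producing a fibration $M \to \R/\Z$ whose fibres are closed Poincar\'e sections of $B$; so the second statement reduces to the first.)

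I do not expect a genuine obstacle here — the only tools are Stokes' theorem and the transversality computation above. The points that need care are definitional rather than technical: one should fix the intended meaning of a \emph{closed Poincar\'e section} (a compact boundaryless hypersurface transverse to $B$, which is all that the argument uses), and one must exclude the degenerate case $\dim M = 1$, where $d\eta = 0$ and a Reeb-like field on a circle does admit a Poincar\'e section.
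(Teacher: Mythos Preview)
Your argument is correct and, for the first assertion, takes essentially the same route as the paper: both show that $i^*(d\eta)^n$ is a volume form on any hypersurface transverse to the Reeb direction and then kill it with Stokes. The paper packages the nondegeneracy step more compactly via $\iota_R\mu = (d\eta)^n$ for the contact volume $\mu = \eta\wedge(d\eta)^n$, so that transversality of $R$ to $S$ is literally the statement that $i^*\iota_R\mu$ is a volume form; your explicit symplecticity computation on $T_x\Sigma$ gives the same conclusion by hand.

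For the second assertion your approach genuinely differs. The paper invokes Tischler's theorem: a closed $1$-form $\kappa$ with $\kappa(B) > 0$ on a closed manifold can be approximated by one with rational periods, producing a closed Poincar\'e section and reducing to the first part. You instead give a self-contained Stokes argument, exhibiting $\kappa\wedge(d\eta)^n$ as a volume form which is exact, namely $-d\bigl(\kappa\wedge\eta\wedge(d\eta)^{n-1}\bigr)$. Your route avoids the (nontrivial) Tischler machinery entirely and is the cleaner proof; the paper's route has the virtue of making the ``in particular'' logically transparent, which you also note parenthetically.
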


\begin{proof}
Let $S$ be a closed hypersurface of dimension $2n$ of $M$ where $\dim M = 2n + 1$ ($n \geq 1$). Considering the volume-form $\mu = \eta \wedge (d\eta)^n$ and the Reeb vector field $R$, we have $\iota_R \mu = (d\eta)^n$. Then considering the inclusion $i : S \subset M$, because $i^*(d\eta)^n$ is exact on $S$, Stokes' Theorem gives
\begin{equation*}
\int_S i^*\iota_R\mu = \int_S i^*(d\eta)^n = 0.
\end{equation*}
Hence, $R$ is not transverse to $S$. Thus, $B$ is not transverse to $S$ either. In particular, $B$ has no Poincar{\'e} section. Moreover, it follows from the same techniques as in \cite{Tischler1970} that a vector field $X$ possesses a closed transverse section provided there exists a closed 1-form $\kappa$ such that $\kappa(X) > 0$. This concludes the proof.
\end{proof}

As an example with relevance to applications, Beltrami fields can be thought of as generalisations of Reeb-like vector fields in 3 dimensions \cite{etnyreContactTopologyHydrodynamics2000}. If $M$ has a metric and an orientation, a Beltrami field $B$ satisfies
\begin{equation*}
\nabla \cdot B = 0, \qquad \nabla \times B = \lambda B    
\end{equation*}
for some $\lambda \in C^{\infty}(M)$. In terms of $\beta = \iota_B\mu$ where $\mu$ is the induced volume form, this equation reads
\begin{equation*}
d B^{\flat} = \lambda \beta    
\end{equation*}
and therefore $\eta = B^{\flat}$ is a 1-form where the assumptions of \cref{prop:etaExists} are satisfied provided that $B$ is non-vanishing.


\section{An adapted 1-form exists in the toroidal region}\label{sec:eta}

The main purpose of this section is to prove \cref{thm:etaexists}. The proof for a given tangential flux system $(B,\nu,\mu)$ consists of two steps. In step one, the local existence of closed 1-forms adapted to the flux system is established in a neighborhood of each of the torus and axis in the toroidal region. Step one is achieved using tubular neighborhoods and in the case of the tori, we use flowouts which are essentially the restriction of a transverse flow to the torus; these are also used in the second step. In step two, we patch together these closed 1-forms using a partition of unity. In step two, care is taken to ensure that the functions in the partition of unity have differentials that are linearly dependent with $\nu$. The construction of the partition relies on techniques which also prove \cref{classification}. These techniques apply more generally to closed 1-forms in a region which we call the closed region. This generalisation is adopted to clarify the role that each element of the flux system plays in the proof and because it does not introduce any additional difficulties to the proof.

This section proceeds as follows. In \cref{sec:toroidal}, the toroidal region is formally defined and some auxiliary definitions used to prove \cref{thm:etaexists} are introduced. Then, in Section \ref{sec:proofthatetaexists}, \cref{thm:etaexists} is proven under the assumption that the suitable partition of unity has been constructed. Lastly, in Section \cref{partitionandclassification}, the partition of unity is constructed and \cref{classification} is proven.

\subsection{The toroidal region and the closed region}\label{sec:toroidal}

We begin by introducing some convenient notation used throughout this section. If $M$ is a manifold with (possibly empty) boundary, $S$ is a submanifold, and $i : S \subset M$ is the inclusion, then for any $k$-form $\nu$ on $M$, we denote $S^*\nu = i^*\nu$. 

The toroidal region arises from a more general concept which we call the closed region. The closed region accounts for the simplest kind of critical sets which can occur for a closed 1-form $\nu$ on $M$ such that $\partial M^*\nu = 0$ and will now be defined.

\begin{definition}\label{closeddef}
Let $\Omega = \{x \in M : \nu|_x \neq 0\}$ be the set of non-critical points of $\nu$. The boundary condition $\partial M^*\nu = 0$ implies that the non-vanishing closed 1-form $\Omega^*\nu$ defines an integrable distribution $D$ on $\Omega$ regarded as an open submanifold with boundary of $M$ (see \cref{app:distandfol} for a formal review of basic distribution and foliation theory on manifolds with boundary). Let $\mathcal{F}$ be the associated foliation on $M$ to $D$.
\begin{enumerate}
    \item A closed leaf $S$ of the induced foliation is a \emph{regular closed leaf of $\nu$}. The \emph{pre-closed region of $\nu$}, denoted $\mathcal{C}^2$, is the union of all regular closed leaves of $\nu$
    \item A \emph{degenerate closed leaf of $\nu$} is a connected closed submanifold $K$ of codimension at least $2$ which is contained in $\interior M$ and satisfying $\nu|_K = 0$ and possessing a neighbourhood $U$ in $M$ such that $U \backslash K \subset \mathcal{C}^2$.
    \item Set $\mathcal{C}^1$ to be the union of degenerate closed leaves of $\nu$. Then, the closed region $\mathcal{C}$ is the union $\mathcal{C}^1 \cup \mathcal{C}^2$.
\end{enumerate}
\end{definition}

\begin{remark}[on \cref{closeddef}] The regular closed leaves of $\nu$ coincide with the compact integral manifold of $D$, since integral manifolds of a distribution which are closed as subsets of the ambient space are necessarily maximal.
\end{remark}

We now define the toroidal region.

\begin{definition}\label{toroidaldef}
Assume that $\dim M = 3$ and let $(B,\nu,\mu)$ be a tangential flux system on $M$ (see \cref{def:fluxsystem}).
\begin{enumerate}
    \item A \emph{regular torus of $(B,\nu,\mu)$} is a regular closed leaf $S$ of $\nu$ such that $B|_S$ is non-vanishing. The \emph{pre-toroidal region of $(B,\nu,\mu)$}, denoted $\mathcal{T}^2$, is the union of all regular tori.
    \item An \emph{axis of $(B,\nu,\mu)$} is a 1-dimensional periodic orbit $\gamma$ of $B$ which is also a degenerate closed leaf of $\nu$.
    \item Set $\mathcal{T}^1$ to be the union of all axes. The \emph{toroidal region of $(B,\nu,\mu)$} is then the subset $\mathcal{T} = \mathcal{T}^1 \cup \mathcal{T}^2$.
\end{enumerate}
\end{definition}

\begin{remark}[on \cref{toroidaldef}]
It is well-known from index theory that the only compact orientable surface admitting a nowhere zero vector field is the 2-torus. In particular, the regular tori are indeed tori. 
\end{remark}

\subsection{Proof of Thm.~\ref{thm:etaexists}}\label{sec:proofthatetaexists}

Our main tool for proving \cref{thm:etaexists} is a flowout. This terminology is adopted from \cite{Lee2012} and for our purposes means the following. Before using the specific assumptions in \cref{thm:etaexists}, we will focus more generally on the closed region because of its relevance to later sections. Let $M$ be a manifold with boundary and $\nu$ a closed 1-form on $M$ such that $\partial M^*\nu = 0$. Let $\Omega = \{x \in M : \nu|_x = 0\}$ and fix a vector field $N$ on $\Omega$ such that $\nu(N) = 1$.

\begin{definition}
For any $S$ a closed leaf of $\nu$ and $I$ an interval containing $0$, if for each $x \in S$, the integral curve $\gamma_x$ of $N$ starting at $x$ is defined for all $t \in I$, then the smooth map $\Sigma : S \times I \to M$ such that $\Sigma(x,t) = \gamma_{x}(t)$ is called the \emph{flowout of $S$ with interval $I$}.
\end{definition}

The following establishes elementary facts about of flowouts we require for proving \cref{thm:etaexists}.

\begin{lemma}\label{flowoutexistence}
Let $S$ be a closed leaf of $\nu$. Then either $S \subset \interior M$ or $S$ is a connected component of $\partial M$. Moreover, there exists $\epsilon > 0$ and an interval $I$ such that one of the following holds.
\begin{enumerate}
    \item if $S \subset \interior M$, then $I = (-\epsilon,\epsilon)$,
    \item and if $S$ is a component of $\partial M$ and $N$ is inward-pointing on $S \subset \partial M$, then $I = [0,\epsilon)$ and otherwise $I = (-\epsilon,0]$,
\end{enumerate}
and a flowout $\Sigma$ with interval $I$ which is a diffeomorphism onto its open image in $M$.
\end{lemma}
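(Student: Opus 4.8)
\textbf{Proof plan for \cref{flowoutexistence}.}

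The plan is to treat the interior and boundary cases by reducing each to a standard flow-existence statement for the vector field $N$, using the fact that a closed leaf $S$ is compact. First I would establish the dichotomy $S \subset \interior M$ or $S$ a component of $\partial M$. Since $S$ is a leaf of the foliation $\mathcal{F}$ defined on $\Omega$ by the non-vanishing closed 1-form $\Omega^*\nu$, and the boundary condition $\partial M^*\nu = 0$ means every tangent vector to $\partial M$ lies in the distribution $D = \ker \Omega^*\nu$, it follows that each connected component of $\partial M$ contained in $\Omega$ is a union of leaves, hence (being connected) a single leaf; conversely a leaf meeting $\interior M$ must stay in $\interior M$ because $D$ is transverse to no inward direction there in the sense that $\nu(N)=1 \neq 0$ forces the leaves to be the level-type hypersurfaces of a local primitive of $\nu$, which do not touch $\partial M$ unless they equal a boundary component. (This is exactly the kind of statement recorded in \cref{app:distandfol}; I would cite it rather than reprove it.)

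Next, for the flowout itself: fix $N$ with $\nu(N) = 1$ on $\Omega$, and consider its flow $\Phi : \mathcal{D} \to M$ defined on the open domain $\mathcal{D} \subset M \times \R$. In the interior case, $S$ is a compact subset of $\interior M$, which is an open manifold without boundary, so by the standard ``uniform time of existence on compact sets'' lemma (Lee, \emph{Smooth Manifolds}) there is $\epsilon > 0$ with $S \times (-\epsilon,\epsilon) \subset \mathcal{D}$; set $I = (-\epsilon,\epsilon)$ and $\Sigma = \Phi|_{S\times I}$. In the boundary case, $S = \partial M \cap (\text{that component})$ and $N$ is (say) inward-pointing along $S$; here I would either invoke the manifold-with-boundary version of the flowout theorem directly, or push the picture into a collar: a collar neighborhood $\partial M \times [0,\delta) \hookrightarrow M$ lets one extend $N$ slightly past the boundary into an open manifold $M'$, run the flow there, and then observe that because $N$ points inward the trajectories from $S$ immediately enter $M$ and stay in $M$ for small positive time, giving $S \times [0,\epsilon) \subset \mathcal{D}$; the outward-pointing case is symmetric with $I = (-\epsilon,0]$. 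In all cases $\Sigma(S \times I)$ is an open subset of $M$ because $\Sigma$ is a flow restricted to a hypersurface transverse to $N$ (transversality holds since $\nu(N) = 1$ while $TS \subset \ker\nu$), so $\Sigma$ has full-rank differential everywhere, i.e. it is a local diffeomorphism and an open map.

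Finally, to upgrade ``local diffeomorphism'' to ``diffeomorphism onto its image'' I would shrink $\epsilon$ to get injectivity. The map $\Sigma$ is injective on $S \times \{0\}$ (it is the identity there), and $d\Sigma$ is everywhere nonsingular, so injectivity on a neighborhood of $S \times \{0\}$ follows from a standard tube-around-a-compact-set argument: if $\Sigma$ failed to be injective on $S \times (-1/n, 1/n)$ for every $n$, one would extract sequences $(x_n,t_n) \neq (y_n,s_n)$ with $\Sigma(x_n,t_n) = \Sigma(y_n,s_n)$ and $t_n,s_n \to 0$; by compactness of $S$ pass to convergent subsequences $x_n \to x$, $y_n \to y$, forcing $\Sigma(x,0) = \Sigma(y,0)$, hence $x = y$, contradicting local injectivity of $\Sigma$ near $(x,0)$. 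Then $\Sigma$ restricted to the smaller interval is an injective open local diffeomorphism, hence a diffeomorphism onto its open image. The main obstacle I anticipate is purely bookkeeping at the boundary: making the flow-existence argument clean on a manifold with boundary when $N$ is only defined on $\Omega$ (not all of $M$) and only after verifying $S \subset \Omega$ — which is automatic since $S$ is a leaf of a foliation living on $\Omega$ — and handling the two sign cases for the direction of $N$ along $\partial M$ uniformly; the collar-extension device is the cleanest way I see to dispatch this.
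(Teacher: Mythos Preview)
Your proposal is correct and follows essentially the same route as the paper. For the dichotomy, the paper argues just as you do that a boundary component through a point of $S$ is a closed (hence maximal) integral manifold of $D$ and therefore coincides with $S$; for the flowout, the paper simply cites Lee's flowout theorems (Theorems~9.20 and~9.24) in one line, whereas you have unpacked those theorems into the uniform-time-on-compacta, collar-extension, and shrink-for-injectivity arguments---so your write-up is more explicit but not different in substance.
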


\begin{proof}
Note that the boundary $\partial \Omega$ is closed in $\Omega$. Suppose that $S \cap \partial M \neq \emptyset$. Then, we may fix a point $x \in S \cap \partial \Omega$. Consider now the connected component $C$ of $\partial \Omega$ containing $x$. Because $\partial \Omega$ is embedded in $\Omega$, $C$ is a closed subset of $\partial \Omega$. On the other hand, it is also an integral manifold containing $x$ of the distribution $D$ since $\partial M^*\nu = 0$. In summary, by closedness, $C$ and $S$ are maximal integral manifolds containing $x$. Thus, we must have that $C = S$ from above. In particular, $S$ is a closed, open and connected subset of $\partial M$. Thus, $S$ is a connected component of $\partial M$.

The existence of flowouts now follows directly from the flowout theorems in \cite[Theorem 9.20 and Theorem 9.24]{Lee2012}.
\end{proof}

To prove openness of the toroidal region (and closed region) along with some algebraic aspects of the proof of \cref{thm:etaexists}, we will also make use of the following properties of flowouts.

\begin{lemma}\label{flowoutproperties}
Let $\Sigma$ be a flowout of $S$ with some interval $I$ containing $0$. Then the following holds.
\begin{enumerate}
    \item If $t : S \times I \to I \subset \R$ denotes projection followed by inclusion into $\R$, then $\Sigma^*\nu = dt$.
    \item If $\Sigma'$ is another flowout of $S$ with some interval $J$ containing $0$, then $\Sigma$ and $\Sigma'$ coincide on $S \times (I \cap J)$.
    \item For all $s \in I$, $\Sigma(\cdot,s) : S \to M$ is a diffeomorphism onto its image and its image $S' = \Sigma(S \times \{s\})$ is a closed leaf of $\nu$. Moreover, the flowout $\Sigma' : S' \times I' \to M$ of $S'$ with interval $I' = I - s$ exists and satisfies, for $x \in S$ and $t' \in I'$,
    \begin{equation*}
    \Sigma'(\Sigma(x,s),t') = \Sigma(x,s + t').
    \end{equation*}
\end{enumerate}
\end{lemma}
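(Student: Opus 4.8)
The plan is to handle the three items in order, using item~1 to streamline item~3. \textbf{For item~1}, I would set $\alpha := \Sigma^*\nu$ and exploit that $\alpha$ is closed (the pullback of the closed form $\nu$) and satisfies $\iota_{\partial_t}\alpha = \nu(N)\circ\Sigma = 1$, because $d\Sigma(\partial_t)|_{(x,t)}$ is the velocity $\gamma_x'(t) = N_{\Sigma(x,t)}$ of the $N$-integral curve. Cartan's formula then gives $\mathcal{L}_{\partial_t}\alpha = d\iota_{\partial_t}\alpha + \iota_{\partial_t}d\alpha = 0$, so $\alpha$ is invariant under translation in the $t$-variable, and it therefore suffices to identify $\alpha$ with $dt$ on the slice $S\times\{0\}$, where $\Sigma(\cdot,0)$ is just the inclusion $S\hookrightarrow M$: on vectors tangent to the slice, $\alpha$ restricts to $S^*\nu$, which vanishes because $S$ is a leaf of $\mathcal{F}$ (and, in the boundary case, because $\partial M^*\nu = 0$), while $\alpha(\partial_t) = 1 = dt(\partial_t)$. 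Equivalently, in adapted coordinates $(y,t)$ the closedness of $\alpha$ forces its $dy$-components to be $t$-independent, and they vanish at $t=0$. \textbf{Item~2} is pure ODE uniqueness: for each fixed $x\in S$, both $t\mapsto\Sigma(x,t)$ and $t\mapsto\Sigma'(x,t)$ are integral curves of the smooth vector field $N$ through $x$, hence coincide on $I\cap J$.

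\textbf{For item~3}, the first observation is that by \cref{flowoutexistence} the flowout $\Sigma:S\times I\to M$ is a diffeomorphism onto an open set $W\subseteq M$, so its restriction to the embedded closed slice $S\times\{s\}$ is a diffeomorphism onto $S' := \Sigma(S\times\{s\})$; thus $S'$ is an embedded submanifold diffeomorphic to $S$ and closed in $W$. That $S'$ is actually a leaf of the foliation follows either from item~1 (restricting $\Sigma^*\nu = dt$ to the slice gives $\Sigma(\cdot,s)^*\nu = 0$, so $TS'\subseteq\ker\nu$, and $S'$ is connected of the correct dimension) or, more conceptually, from the computation $\mathcal{L}_N\nu = \iota_N d\nu + d(\nu(N)) = 0$: the time-$s$ flow of $N$ (defined on a neighbourhood of $S$ precisely because the flowout exists on $S\times I\ni s$) preserves $\nu$, hence the distribution $D = \ker\nu$ and its foliation, carrying the leaf $S$ to the leaf $S'$. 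To produce the reparametrised flowout I would simply define $\Sigma':S'\times(I-s)\to M$ by $\Sigma'(\Sigma(x,s),t') := \Sigma(x,s+t')$ — well defined and smooth because $\Sigma(\cdot,s)$ is a diffeomorphism onto $S'$ — and note that $t'\mapsto\Sigma(x,s+t') = \gamma_x(s+t')$ has derivative $N_{\gamma_x(s+t')}$ and starts at $\Sigma(x,s)$, so it is exactly the $N$-integral curve through that point; hence $\Sigma'$ is the flowout of $S'$ with interval $I' = I-s$ and the displayed composition identity holds by construction.

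\textbf{The step needing the most care} is upgrading ``$S'$ is an embedded integral manifold, closed in $W$'' to ``$S'$ is a \emph{closed leaf of $\nu$}'' in the sense of \cref{closeddef}, i.e.\ closed as a subset of $M$ and maximal; this uses that $W$ is the entire flowout image (so the slices are proper in $W$) together with the interior/boundary dichotomy from \cref{flowoutexistence}, and in the toroidal application $S$ is a torus, so $S'$ is compact and this is immediate. The well-definedness of $\Sigma'$ similarly hinges on the injectivity established in item~3; everything else is routine bookkeeping.
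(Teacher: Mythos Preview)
Your approach is essentially correct and close in spirit to the paper's, with one misstep worth flagging.

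For item~1, the paper argues via local primitives: on a contractible patch $V\times I$ it writes $\Sigma^*\nu = df$, integrates along the $\partial_t$-curves to get $f = t + c$, and patches. Your Cartan-formula argument ($\mathcal{L}_{\partial_t}\alpha = 0$, so $\alpha$ is $t$-translation invariant and agrees with $dt$ on the slice $t=0$) is a legitimate alternative and arguably cleaner, avoiding the cover by contractibles. Item~2 is handled identically in both.

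For item~3 you invoke \cref{flowoutexistence} to assert that $\Sigma:S\times I\to M$ is itself a diffeomorphism onto an open set $W$, but that lemma only furnishes \emph{some} interval for which the flowout embeds; it does not apply to the arbitrary $I$ in the hypothesis of \cref{flowoutproperties}, and indeed later arguments (\cref{compactalternative}) hinge precisely on flowouts that fail to be injective. The paper instead appeals directly to the Fundamental Theorem on Flows: the time-$s$ map $\psi^N_s$ of $N$ is a diffeomorphism from its open domain onto its image, and $\Sigma(\cdot,s) = \psi^N_s|_S$ is therefore an embedding regardless of whether $\Sigma$ is globally injective. Once you replace your first sentence of item~3 with this, the rest of your argument (the $\mathcal{L}_N\nu = 0$ computation showing $\psi^N_s$ carries leaves to leaves, and the explicit construction of $\Sigma'$ via the flow group law) is correct and in fact more detailed than the paper's one-line citation.

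Your closing worry about upgrading ``closed in $W$'' to ``closed in $M$'' is unnecessary: closed leaves of $\nu$ are compact (see the remark following \cref{closeddef}), so $S'\cong S$ is compact and hence closed in $M$ automatically, without any appeal to the specific toroidal setting.
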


\begin{proof}
Concerning the first statement, set $\kappa =\Sigma^*\nu$. Let $V$ be a contractible open subset of $S$. Then, we have the contractible open subset of $V \times I$ so that $(V \times I)^*\kappa = df$ for some $f \in C^{\infty}(V \times I)$. Now, let $p \in V$ and consider the curve $\gamma_p : I \to U$ given by $\gamma_p(t) = (p,t)$. Then, because $\kappa(\partial_t) = 1$, we have
\begin{align*}
(f \circ \gamma_p)'(t) = df(\gamma_p'(t)) = 1.
\end{align*}
Hence, $f \circ \gamma_p = t+f(p,0)$. On the other hand because $(S \times \{0\})^*\kappa = 0$, we have that $f(\cdot,0) = c$ for some $c \in \R$. Thus
\begin{equation*}
f = t+c.    
\end{equation*}
Hence $\kappa|_{V \times J} = dt|_{V \times J}$. Since $V$ was arbitrary, we conclude that $\Sigma^*\nu = \kappa = dt$. The second statement follows directly from uniqueness of integral curves. Having established these prior statements, the third statement now follows directly from the Fundamental Theorem of Flows in \cite[Theorem 9.12]{Lee2012}.
\end{proof}

We now show the openness of the closed and toroidal regions and verify that the critical points of $\nu$ are nowhere dense in these regions.

\begin{lemma}\label{regionisopen}
The regions $\mathcal{C}^2$ and $\mathcal{C}$ of $\nu$ are open in $M$ and the critical points of $\nu$ in $\mathcal{C}$ are nowhere dense in $\mathcal{C}$. Moreover, if $(B,\nu,\mu)$ is a tangential flux system on $M$ (so that $\dim M = 3$), then the pre-toroidal $\mathcal{T}^2$ and toroidal region $\mathcal{T}$ are open in $M$.
\end{lemma}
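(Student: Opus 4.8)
The plan is to prove openness first for $\mathcal{C}^2$, then for $\mathcal{C}$, then transfer both conclusions to the toroidal setting, and finally handle nowhere-density of the critical set. For $\mathcal{C}^2$: let $x \in \mathcal{C}^2$, so $x$ lies on a regular closed leaf $S$ of $\nu$. By \cref{flowoutexistence} there is a flowout $\Sigma : S \times I \to M$ which is a diffeomorphism onto an open neighbourhood of $S$. By part (3) of \cref{flowoutproperties}, for every $s \in I$ the slice $\Sigma(S \times \{s\})$ is again a closed leaf of $\nu$; moreover it is diffeomorphic to $S$ hence compact, so it is a \emph{regular} closed leaf and lies in $\mathcal{C}^2$. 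Therefore the open set $\Sigma(S \times I)$ is contained in $\mathcal{C}^2$, giving an open neighbourhood of $x$ inside $\mathcal{C}^2$. Hence $\mathcal{C}^2$ is open.

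For $\mathcal{C} = \mathcal{C}^1 \cup \mathcal{C}^2$: since $\mathcal{C}^2$ is already open, it suffices to produce, for each $x$ in a degenerate closed leaf $K \subset \mathcal{C}^1$, an open neighbourhood contained in $\mathcal{C}$. But this is essentially the definition: $K$ comes with a neighbourhood $U$ in $M$ with $U \setminus K \subset \mathcal{C}^2$, so $U = K \cup (U\setminus K) \subset \mathcal{C}^1 \cup \mathcal{C}^2 = \mathcal{C}$. One should check $U$ can be taken open (shrink if necessary) and that $x \in K \subset U$; then $\mathcal{C}$ is open. For nowhere-density of critical points in $\mathcal{C}$: the critical set of $\nu$ inside $\mathcal{C}$ is exactly $\mathcal{C}^1$ (points of $\mathcal{C}^2$ are non-critical, being on leaves of the distribution $D$ defined by the nonvanishing form $\Omega^*\nu$). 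Each degenerate closed leaf $K$ has codimension at least $2$ and is a closed submanifold, so it is closed with empty interior; the union $\mathcal{C}^1$ of such leaves has empty interior in $\mathcal{C}$ because its complement $\mathcal{C}^2$ is open and dense in $\mathcal{C}$ — indeed every point of $K$ is a limit of points of $U \setminus K \subset \mathcal{C}^2$. (If there were infinitely many axes one must note they are locally finite, which follows since distinct degenerate leaves have disjoint defining neighbourhoods $U$; I would remark on this but not belabor it.)

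Finally, the toroidal case: $\mathcal{T}^2$ is the union of regular tori, which are regular closed leaves $S$ of $\nu$ with $B|_S$ nonvanishing. Openness of $\mathcal{T}^2$ follows by the same flowout argument as for $\mathcal{C}^2$, with one extra ingredient: given $x \in \mathcal{T}^2$ on a regular torus $S$, take the flowout $\Sigma : S \times I \to M$; each slice $S_s := \Sigma(S \times\{s\})$ is a regular closed leaf, and I must check $B|_{S_s}$ is nonvanishing for $s$ near $0$, so that $S_s$ is a regular \emph{torus}. Shrinking $I$ if necessary, $B$ is nonvanishing on the compact set $\overline{\Sigma(S\times I')}$ for a smaller interval $I'$ (since $B|_S \neq 0$ and nonvanishing is an open condition on a compact set), so every slice $S_s$ with $s \in I'$ is a regular torus and $\Sigma(S \times I') \subset \mathcal{T}^2$ is an open neighbourhood of $x$. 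Then $\mathcal{T} = \mathcal{T}^1 \cup \mathcal{T}^2$ is open by the same argument as for $\mathcal{C}$: an axis $\gamma$ is a periodic orbit of $B$ and a degenerate closed leaf, so it has a neighbourhood $U$ with $U \setminus \gamma \subset \mathcal{C}^2$; shrinking $U$ so that $B \neq 0$ on $U$ (possible since $B|_\gamma \neq 0$) ensures $U \setminus \gamma \subset \mathcal{T}^2$, whence $U \subset \mathcal{T}$.

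The main obstacle is the bookkeeping around the degenerate/axis leaves — specifically making sure the neighbourhood $U$ from \cref{closeddef}/\cref{toroidaldef} can be taken open and small enough that the nonvanishing conditions on $\nu$ (away from $K$) and on $B$ persist simultaneously, and handling potential accumulation of multiple degenerate leaves. The flowout machinery in \cref{flowoutexistence,flowoutproperties} does the real work for the regular part and makes that part essentially routine; the delicate points are all concentrated at the critical locus $\mathcal{C}^1 = \mathcal{T}^1$.
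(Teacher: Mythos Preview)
Your overall approach matches the paper's: flowouts from \cref{flowoutexistence} and \cref{flowoutproperties} handle $\mathcal{C}^2$ and $\mathcal{T}^2$, and the defining neighbourhoods of degenerate leaves/axes handle $\mathcal{C}$ and $\mathcal{T}$. Two points deserve comment.

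First, for nowhere-density the paper's argument is slightly cleaner than yours and sidesteps the local-finiteness worry you raise. The paper shows directly that any open $U \subset \mathcal{C}$ on which $\nu$ vanishes must be empty: such a $U$ lies in $\mathcal{C}^1$, and for each degenerate leaf $K$ with defining open neighbourhood $V$ (so $V\setminus K \subset \mathcal{C}^2$) one gets $U\cap V \subset K$; since $K$ has positive codimension this forces $U\cap V=\emptyset$, hence $U\cap K=\emptyset$, hence $U=\emptyset$. No assumption on how the degenerate leaves are distributed is needed.

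Second, and more substantively, your argument for openness of $\mathcal{T}$ near an axis has a gap. You write that shrinking $U$ so that $B\neq 0$ on $U$ ``ensures $U\setminus\gamma\subset\mathcal{T}^2$''. This does not follow: a point $x\in U\setminus\gamma$ lies on a regular closed leaf $S_x$, and membership in $\mathcal{T}^2$ requires $B$ to be nonvanishing on \emph{all} of $S_x$, not just at $x$. Since $S_x$ need not be contained in $U$, the condition $B|_U\neq 0$ says nothing about $B$ on $S_x\setminus U$. The paper is admittedly terse here too (it simply writes ``by construction''), so you are not doing worse than the paper; but the explicit implication you state is not correct as written, and making it rigorous would require knowing that the leaves near $\gamma$ stay within a region where $B$ is nonvanishing, which is closer to the content of \cref{lem:addonK} than to anything available at this point.
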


\begin{proof}
First, if $S$ is a closed leaf of $\nu$, by Lemma \ref{flowoutexistence}, we can take a flowout $\Sigma : S \times I \to M$ of $S$ with some interval $I$ which is a diffeomorphism onto its open image $U = \Sigma(S \times I)$ in $M$. By Lemma \ref{flowoutproperties}, $U \subset \mathcal{C}^2$. Regarding $(B,\nu,\mu)$, if $B$ is non-vanishing on $S$, then $\{x \in U : B|_x \neq 0\} = \Sigma(V)$ for some neighbourhood $V$ of $S \times \{0\}$ in $S \times I$. Because $S \times \{0\}$ is compact in $S\times I$, it is elementary topology that $V$ contains $S \times $ for some open subset $J \subset I$. Thus, $\{x \in U : B|_x \neq 0\} = \Sigma(S \times J) \subset \mathcal{T}^2$. In summary, $\mathcal{C}^2$ is open and if $(B,\nu,\mu)$ is a tangential flux system, $\mathcal{T}^2$ is open in $M$. It now follows by construction that $\mathcal{C}$ is open in $M$ and that $\mathcal{T}$ are open in $M$ if $(B,\nu,\mu)$ is a tangential flux system on $M$.

Concerning critical points, if $U$ is an open set in $M$ contained in $\mathcal{C}$ with $\nu|_U = 0$, then $U \subset \mathcal{C}^1$. If $K$ is a degenerate closed leaf of $\nu$, then taking an open set $V \supset K$ so that $V\backslash K \subset \mathcal{C}^2$, we have $U\cap V \subset K$. Since each degenerate closed leaf has nonzero codimension, we must have that $U\cap K = \emptyset$. Hence, $U = \emptyset$. As required.
\end{proof}

\begin{remark}
The relationship we are exploiting between the closed region and toroidal region for a tangential flux system $(B,\nu,\mu)$ can now be clarified. Clearly we have the relationship $\mathcal{T} \subset \mathcal{C}$. But, since $\mathcal{T}$ is an open submanifold of $M$, we may consider the closed region $\mathcal{C}_{\mathcal{T}}$ of the closed 1-form $\mathcal{T}^*\nu$ which satisfies $(\partial \mathcal{T})^*(\mathcal{T}^*\nu) = 0$ because $\partial M^*\nu = 0$. It is clear that we have equality $\mathcal{C}_{\mathcal{T}} = \mathcal{T}$. Thus, statements applying to $\nu$ on $\mathcal{C}$ also apply to $\nu$ on $\mathcal{T}$ (such as the nowhere denseness of critial points of $\nu$).
\end{remark}

We now establish the local existence an adapted 1-form in a neighbourhood of each regular torus of a tangential flux system.

\begin{lemma}\label{etaontubenbhd}
Let $(B,\nu,\mu)$ be a tangential flux system on $M$. Let $S$ be a regular torus. Then, there exists a neighbourhood $U$ of $S$ in $\mathcal{T}$ and a closed 1-form $\eta \in \Omega^1(U)$ such that $\eta(B) > 0$.
\end{lemma}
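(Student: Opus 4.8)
The plan is to produce $\eta$ on a flowout neighbourhood of the regular torus $S$ by pulling back a suitable $1$-form under the flowout diffeomorphism, using that $B$ is transverse to the leaves $\nu$ foliates near $S$. First I would invoke \cref{flowoutexistence} to fix a flowout $\Sigma : S \times I \to M$ that is a diffeomorphism onto an open set $U \subset \mathcal{T}$; here $I = (-\epsilon,\epsilon)$ since $S \subset \interior M$ (a regular torus is a closed leaf lying in the interior), and we may shrink $I$ using the argument in \cref{regionisopen} so that $B$ is non-vanishing throughout $U$, hence $U \subset \mathcal{T}^2 \subset \mathcal{T}$. By \cref{flowoutproperties}(1), $\Sigma^*\nu = dt$, where $t$ is the coordinate on $I$. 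So it suffices to construct a closed $1$-form $\hat\eta$ on $S \times I$ with $\hat\eta(\hat B) > 0$, where $\hat B := \Sigma^*B$ is the pulled-back vector field; then $\eta := (\Sigma^{-1})^*\hat\eta$ works.

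The key point is to understand $\hat B$. Since $\nu(B) = 0$ and $\Sigma^*\nu = dt$, we get $dt(\hat B) = 0$, i.e.\ $\hat B$ has no $\partial_t$-component: $\hat B = \hat B_t$ is, for each fixed $t \in I$, a nowhere-vanishing vector field on the torus $S \times \{t\}$ (nowhere-vanishing because $B$ is non-vanishing on $U$). Write $\hat B = X^1(x,t)\,\partial_{x_1} + X^2(x,t)\,\partial_{x_2}$ in coordinates $x = (x_1,x_2)$ on $S \cong \T^2$ (globally, $x_i \in \R/\Z$). I would then define
\[
\hat\eta := \frac{X^1\, dx_1 + X^2\, dx_2}{(X^1)^2 + (X^2)^2},
\]
which is smooth on $S \times I$ since the denominator is everywhere positive, and satisfies $\hat\eta(\hat B) = 1 > 0$. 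The remaining obligation is that $\hat\eta$ be \emph{closed}. This is \emph{not} automatic, so one should instead argue: we only need $d\eta \wedge \nu = 0$ for $\eta$ to be adapted, but the lemma asserts the stronger \emph{closedness}. The honest route to closedness is to shrink $I$ further and exploit that $S$ is a \emph{leaf} (an invariant torus), so on $S \times \{0\}$ the restricted $1$-form is closed (being a $1$-form on a $2$-manifold whose kernel is the direction of a non-vanishing vector field — here one uses that $B|_S$ has no zeros, so in fact any $1$-form on $S$ with $\hat\eta(\hat B|_S)=1$ works, but this uses $d=2$); then perturb in $t$.

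Actually the cleanest approach, and the one I expect the paper takes, avoids analysis entirely: use a \emph{tubular neighbourhood / flowout retraction}. Let $r : U \to S$ be the retraction $r = \mathrm{pr}_S \circ \Sigma^{-1}$, and let $\iota_S : S \hookrightarrow U$. On $S$ itself, pick any closed $1$-form $\alpha \in \Omega^1(S)$ with $\alpha(B|_S) > 0$; this exists because $B|_S$ is a non-vanishing vector field on the $2$-torus $S$, so its flux form $\iota_{B|_S}\,\mathrm{(area)}$ is a closed $1$-form pairing positively with $B|_S$ (equivalently, choose coordinates as above — on $\T^2$, $X^1 dx_2 - X^2 dx_1$ is not quite it; rather use that $B|_S$ preserves some area form $\omega_S$ on $S$, e.g.\ $\omega_S = S^*\mu' $ for a suitable auxiliary volume, and set $\alpha = \iota_{B|_S}\omega_S$, which is closed since $\mathcal L_{B|_S}\omega_S = 0$ and $\alpha(B|_S) = \omega_S(B|_S, B|_S)\cdots$ — one must instead take $\alpha$ with $\alpha(B|_S) = \|B|_S\|^2_{g}$ via $\alpha = (B|_S)^\flat$ composed with a correction, OR simply: $H^1(\T^2) \neq 0$ lets us realize the rotation-number-direction). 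Set $\eta := r^*\alpha$. Then $d\eta = r^*d\alpha = 0$, so $\eta$ is closed on $U$; and $\eta(B)|_S = \alpha(B|_S) > 0$, so by continuity and compactness of $S$, after shrinking $I$ we get $\eta(B) > 0$ on all of $U$.

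\textbf{Main obstacle.} The genuine difficulty is constructing the closed $1$-form $\alpha$ on $S \cong \T^2$ with $\alpha(B|_S) > 0$: one needs that a non-vanishing vector field on the $2$-torus always admits a closed transverse-type $1$-form pairing positively with it. This holds — e.g.\ $B|_S$ is volume-preserving for some area form (any divergence-free representative, or average a metric volume along the flow on the compact $S$), and then on $\T^2$ the flux $1$-form $\iota_{B|_S}\omega_S$ is closed; to make it \emph{positive} on $B|_S$ rather than zero, instead take $\omega_S$ and rotate: $\alpha = \iota_{(J B|_S)}\omega_S$ where $J$ is rotation by $90^\circ$ in the flat metric, giving $\alpha(B|_S) = \omega_S(JB|_S, B|_S) = \|B|_S\|^2 > 0$, and closedness of $\alpha$ follows from $\mathcal L_{JB|_S}\omega_S$... which need not vanish. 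The truly clean statement: on $\T^2$ every nonvanishing $X$ has $X^\flat$ (w.r.t.\ flat metric) cohomologous to a \emph{constant-coefficient} form, and one shows directly via the coordinate formula above that $\hat\eta = (X^1 dx_1 + X^2 dx_2)/((X^1)^2+(X^2)^2)$ on $S$ can be corrected by an exact form to be closed — this is where I expect the paper to invoke the $2$-dimensionality crucially (any $1$-form on a surface with prescribed nonzero kernel line field agrees, up to pointwise rescaling, with a closed one iff the line field is "orientable", which it is here since $B|_S$ orients it). I would handle this by the flowout-retraction argument above once $\alpha$ is in hand, and spend the main effort justifying existence of $\alpha$ on the $2$-torus $S$.
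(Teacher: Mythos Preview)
Your overall architecture matches the paper's: take a flowout $\Sigma : S \times I \to M$ so that $\Sigma^*\nu = dt$, produce a closed $1$-form $\alpha$ on the torus $S$ with $\alpha(B|_S) > 0$, pull it back along the projection $S \times I \to S$ to get a closed $\eta$, and shrink $I$ so that $\eta(B) > 0$ throughout. One minor slip: a regular torus need not lie in $\interior M$ --- it may be a boundary component (cf.\ \cref{flowoutexistence}), so $I$ may be $[0,\epsilon)$ or $(-\epsilon,0]$; this does not affect the argument.

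The genuine gap is the construction of $\alpha$. Each of your candidates fails, as you yourself observe: $\hat\eta = (X^1 dx_1 + X^2 dx_2)/|X|^2$ is not closed; $\iota_{B|_S}\omega_S$ is closed but vanishes on $B|_S$; the $J$-rotated version $\iota_{JB|_S}\omega_S$ has no reason to be closed; and ``averaging a metric volume along the flow'' does not converge in general. You never arrive at a working construction. The paper fills this gap by actually \emph{using the flux-system hypothesis} $\mathcal{L}_B\mu = 0$, which you never invoke: pulling back $\mu$ through $\Sigma$ and writing $\tilde\mu = dt \wedge \iota_{\partial_t}\tilde\mu$, one computes $dt \wedge \mathcal{L}_{\tilde B}\iota_{\partial_t}\tilde\mu = 0$, so the induced field $B_0$ on $S_0 = S \times \{0\}$ preserves the induced area form $\mu_0 = (S_0)^*\iota_{\partial_t}\tilde\mu$. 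The paper then cites the (nontrivial, Kolmogorov-type) fact that any nonvanishing \emph{area-preserving} vector field on a $2$-torus admits a closed $1$-form positive on it. You mention in passing that ``$B|_S$ preserves some area form'' but treat this as incidental rather than the key input, and you do not invoke the torus result that consumes it; without that step the proof is incomplete.
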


\begin{proof}
By \cref{flowoutexistence} and \cref{flowoutproperties} we may take an interval $I \subset \R$ and an embedding $\Sigma : S \times I \to M$ such that $\Sigma^* \nu = dt$ where $t : S \times I \to \R$ is projection onto $I \subset \R$. Now, the map $\Sigma$ induces a volume form $\tmu = \Sigma^*\mu \in \Omega^3(S \times I)$ and vector field $\tB$ on $S \times I$ which is $\Sigma$-related to $B$. In particular, we have the relations
\begin{align*}
\mathcal{L}_{\tB}\tmu &= 0, &  dt(\tB) &= 0.
\end{align*}
On the other hand, with the vector field $\partial_t$ on $S \times I$, observe that
\begin{equation*}
\iota_{\partial_t} (dt \wedge \iota_{\partial_t} \tmu) = (\iota_{\partial_t} dt)\iota_{\partial_t}\tmu - dt \wedge \iota_{\partial_t}\iota_{\partial_t}\tmu = \iota_{\partial_t}\tmu.
\end{equation*}
Thus, $\tmu = dt \wedge \iota_{\partial_t} \tmu$ so that
\begin{equation*}
0 = \mathcal{L}_{\tB} (dt \wedge \iota_{\partial_t} \tmu) = (\mathcal{L}_{\tB} dt) \wedge \iota_{\partial_t} \tmu + dt \wedge \mathcal{L}_{\tB}\iota_{\partial_t}\tmu = dt \wedge \mathcal{L}_{\tB}\iota_{\partial_t}\tmu.
\end{equation*}
Thus with the induced vector field $B_0$ on $S_0 = S\times \{0\}$ from $\tB$ and area form $\mu_0 = (S_0)^*\iota_{\partial_t}\tmu$ we have that $\mathcal{L}_{B_0}\mu_0 = 0$. On the other hand, it is well known in general (see for instance \cite{perrella2022rectifiability}) that any area-preserving vector field $X$ on a 2-torus possess a closed 1-form $\eta_0$ such that $\eta_0(X) > 0$. In particular, fixing such an $\eta_0$ for $B_0$, via the projection $\pi : S \times I \to S_0$, we obtain a closed 1-form $\tilde{\eta} = \pi^*\eta_0$ on $S \times I$ such that $\tilde{\eta}(\tB)|_S > 0$. Moreover, the set $\tilde{U} = \{x \in S \times I : \tilde{\eta}(\tB) > 0\}$ is an open neighbourhood of $S_0$ in $S \times I$. The result now easily follows from pushing forward $\tilde{\eta}$ into $M$ with $\Sigma$.
\end{proof}

We now establish an adapted 1-form in a neighbourhood of each axis of a tangential flux system. This follows immediately from the following well-known result.

\begin{proposition}\label{prop:etaonarbaxis}
Let $X$ be a vector field on a manifold $Y$ and $\gamma$ a 1-dimensional periodic orbit of $X$. Then, there exists a neighborhood $U$ of $\gamma$ and a closed 1-form $\eta \in \Omega^1(U)$ such that $\eta(X) > 0$.
\end{proposition}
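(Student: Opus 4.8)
The plan is to work in a tubular neighbourhood of the periodic orbit $\gamma$ and build the closed $1$-form by hand there, exploiting the fact that $\gamma$ is a simple closed curve transverse to nearby flow behaviour only in the longitudinal direction. First I would invoke the tubular neighbourhood theorem to identify a neighbourhood $U$ of $\gamma$ with a neighbourhood of the zero section of the normal bundle of $\gamma$ in $Y$. Since $\gamma \cong \mathbb{S}^1$ and any vector bundle over $\mathbb{S}^1$ is either trivial or (in the non-orientable case) a twisted bundle, I would pass to a possibly smaller neighbourhood where one can still produce a submersion $\theta : U \to \mathbb{R}/L\mathbb{Z}$ onto the circle (of length $L$, the period of $\gamma$) restricting to the natural coordinate on $\gamma$; the differential $d\theta$ is then the desired candidate, and one sets $\eta = d\theta$, which is manifestly closed.

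The key remaining point is to arrange $\eta(X) > 0$ on a neighbourhood. Along $\gamma$ itself, if $\theta$ is chosen so that $X|_\gamma = \dot\gamma = \partial_\theta$ (possible since $\gamma$ is a periodic orbit, parametrising by the flow), then $\eta(X) = d\theta(X) = 1 > 0$ on $\gamma$. The function $x \mapsto \eta(X)|_x = d\theta(X)|_x$ is continuous on $U$ and equal to $1$ on the compact set $\gamma$, so the open set $V = \{x \in U : \eta(X)|_x > 0\}$ contains $\gamma$; shrinking $U$ to $V$ (or to a tubular neighbourhood contained in $V$) gives a neighbourhood of $\gamma$ on which $\eta$ is closed and $\eta(X) > 0$, as required.

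The main obstacle is the construction of the circle-valued submersion $\theta$ extending the natural coordinate on $\gamma$ to a full neighbourhood --- equivalently, producing a closed $1$-form near $\gamma$ whose cohomology class restricts to the generator of $H^1(\gamma;\mathbb{R})$. This is where one genuinely uses that $\gamma$ has codimension $\geq 1$ and is a one-dimensional submanifold: a tubular neighbourhood of $\gamma$ deformation retracts onto $\gamma$, so $H^1(U) \cong H^1(\mathbb{S}^1) \cong \mathbb{R}$, and the generator is represented by a closed $1$-form which one can take (after the retraction identification) to be the pullback of the angular form $d\theta$ on $\mathbb{S}^1$. Once this $1$-form is in hand, positivity on $X$ is the routine compactness argument above. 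One should be mild about smoothness at $\partial Y$ if $\gamma$ meets the boundary, but the statement places $\gamma$ in a manifold without further boundary hypotheses, so the interior tubular neighbourhood theorem applies directly.
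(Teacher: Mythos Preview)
Your proposal is correct and follows essentially the same route as the paper: both use the Tubular Neighbourhood Theorem to obtain a retraction $r : U' \to \gamma$, pull back a closed $1$-form from $\gamma$ (the paper takes any $\tilde\eta$ with $\tilde\eta(\tilde X)>0$, you take the angular form $d\theta$), and then shrink to the open set where $\eta(X)>0$ using continuity and compactness of $\gamma$. The paper's presentation is slightly more streamlined in that it goes straight to $\eta = r^*\tilde\eta$ without the detour through constructing a circle-valued submersion or worrying about orientability of the normal bundle --- the retraction always exists and the pullback of a top-form on $\gamma$ is automatically closed --- but the underlying argument is the same.
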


\begin{proof}
By periodicity, $\gamma$ must be an embedded 1-dimensional submanifold. Thus, by the Tubular Neighbourhood Theorem there exists a neighbourhood $U'$ of $\gamma$ and a retraction $r : U' \to \gamma$. We also have the induced vector field $\tilde{X}$ on $\gamma$ as $X$ is tangent to $\gamma$. Hence, we have a 1-form $\tilde{\eta} \in \Omega^1(\gamma)$ with $\tilde{\eta}(\tilde{X}) > 0$. Moreover, $\tilde{\eta}$ is closed as a top-form on $\gamma$. Now, consider the closed 1-form $\eta = r^*\tilde{\eta} \in \Omega^1(U')$. Since $r$ is a retraction, we have $\eta(X) > 0$ on $\gamma$. Thus, $U = \{x \in U' : \eta(X)|_x > 0\}$ is the required neighbourhood.
\end{proof}

As a corollary, we obtain the following lemma.

\begin{lemma}\label{etaonaxisnbhd}
Let $(B,\nu,\mu)$ be a tangential flux system on $M$. Let $\gamma$ be an axis. Then, there exists a neighbourhood $U$ of $\gamma$ in $\mathcal{T}$ and a closed 1-form $\eta \in \Omega^1(U)$ such that $\eta(B) > 0$.
\end{lemma}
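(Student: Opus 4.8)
The plan is to deduce \cref{etaonaxisnbhd} directly from \cref{prop:etaonarbaxis}. The axis $\gamma$ is, by \cref{toroidaldef}, a $1$-dimensional periodic orbit of $B$ inside $M$, so applying \cref{prop:etaonarbaxis} with $X = B$ and $Y = M$ yields a neighbourhood $U'$ of $\gamma$ in $M$ and a closed $1$-form $\eta \in \Omega^1(U')$ with $\eta(B) > 0$ on $U'$. The only thing left to check is that $U'$ can be taken to lie inside the toroidal region $\mathcal{T}$; that is, we must shrink $U'$ so that every point of it belongs to $\mathcal{T}$.

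To arrange this I would invoke the structure of $\mathcal{T}$ near $\gamma$. Since $\gamma$ is an axis, by \cref{toroidaldef} it is in particular a degenerate closed leaf of $\nu$, so by \cref{closeddef}(2) there is a neighbourhood $W$ of $\gamma$ in $M$ with $W \setminus \gamma \subset \mathcal{C}^2$, and moreover (unwinding the definition of $\mathcal{T}^2$ versus $\mathcal{C}^2$, together with the fact that $B$ is non-vanishing near $\gamma$ because $\gamma$ is a periodic orbit) one can shrink $W$ so that $W \setminus \gamma \subset \mathcal{T}^2$; adding $\gamma \subset \mathcal{T}^1$ back in gives $W \subset \mathcal{T}$. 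Concretely, \cref{regionisopen} tells us $\mathcal{T}$ is open in $M$ and contains $\gamma$, so $\mathcal{T}$ itself is such a neighbourhood. Then I set $U = (U' \cap \mathcal{T}) \cap \{x : \eta(B)|_x > 0\}$, which is open, contains $\gamma$, lies in $\mathcal{T}$, and carries the closed $1$-form $\eta$ (restricted) with $\eta(B) > 0$ throughout. This is exactly the required data.

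There is no real obstacle here: every ingredient is already in place. The openness of $\mathcal{T}$ (\cref{regionisopen}) is what makes the intersection with a small neighbourhood from \cref{prop:etaonarbaxis} again a neighbourhood of $\gamma$, and closedness of $\eta$ is preserved under restriction. The one point deserving a sentence of care is why $\gamma$ genuinely sits in the \emph{interior} of $M$ so that \cref{prop:etaonarbaxis} applies without boundary complications — but this is immediate from \cref{toroidaldef}(2) via \cref{closeddef}(2), which requires a degenerate closed leaf to be contained in $\interior M$. Hence the proof is a two-line corollary: apply \cref{prop:etaonarbaxis}, then intersect the resulting neighbourhood with the open set $\mathcal{T}$.
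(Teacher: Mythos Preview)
Your proposal is correct and matches the paper's approach exactly: the paper states \cref{etaonaxisnbhd} as an immediate corollary of \cref{prop:etaonarbaxis}, and your argument---apply \cref{prop:etaonarbaxis} and then intersect with the open set $\mathcal{T}$ via \cref{regionisopen}---is precisely the intended justification, spelled out in more detail than the paper itself gives.
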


To patch together the local adapted 1-forms around regular tori and axes, we require an adapted partition of unity which we now introduce. This may be done on the closed region. Let $F$ denote the set of leaves and degenerate closed leaves in $\mathcal{C}$.
\begin{definition}
Let $\{U_{L}\}_{L \in F}$ be an open cover of $\mathcal{C}$ such that $U_{L} \supset L$ for all $L \in F$. A \emph{partition of unity adapted to $\nu$ on $\mathcal{C}$ and subordinated to $\{U_{L}\}_{L \in F}$} is a family $\{\psi_L : \mathcal{C} \to \R\}_{L \in A}$ of smooth compactly supported non-negative functions such that
\begin{enumerate}
    \item the collection of supports $\{\text{supp}(\psi_{L})\}_{L \in F}$ is locally finite and $\text{supp}(\psi_{L}) \subset U_{L}$ for each $L \in F$,
    \item we have $\sum_{L \in F} \psi_{L} = 1$, and
    \item for all $L \in F$, $d\psi_{L} \wedge \nu = 0$.
\end{enumerate}
\end{definition}

We have the following existence of partitions of unity in the above sense. The proof is given in Section \ref{partitionandclassification}.

\begin{proposition}\label{partition of unity}
Let $\{U_{L}\}_{L \in F}$ be an open cover of $\mathcal{C}$ such that $U_{L} \supset L$ for all $L \in F$. Then, there exists a partition of unity $\{\psi_L : \mathcal{C} \to \R\}_{L \in A}$ adapted to $\nu$ on $\mathcal{C}$ and subordinated to $\{U_{L}\}_{L \in F}$.
\end{proposition}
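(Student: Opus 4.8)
The plan is to build the adapted partition of unity by pulling back an ordinary partition of unity along a suitable "quotient" map that collapses each leaf to a point, so that condition (3), $d\psi_L \wedge \nu = 0$, is automatic — it simply says each $\psi_L$ is constant along the leaves of $\mathcal{F}$, i.e.\ is (locally) a function of the flowout coordinate $t$. Concretely, I would first reduce to working component-by-component on $\mathcal{C}$, and on each component $C$ use \cref{flowoutexistence} and \cref{flowoutproperties}: around each regular closed leaf $S$ there is a flowout $\Sigma\colon S\times I\to M$ which is a diffeomorphism onto an open set, with $\Sigma^*\nu = dt$. The key structural fact I would establish is that the space of leaves $C/\mathcal{F}$ — including the degenerate closed leaves as points — carries a natural structure of a (possibly non-Hausdorff, but locally Euclidean, $1$-dimensional) manifold $Q$, with a quotient map $q\colon C\to Q$ that is a submersion away from the degenerate leaves, such that $\nu = q^*(\text{the coordinate form})$ locally. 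The flowout coordinate $t$ gives the charts on $Q$; compatibility of overlapping flowouts (statement (2) of \cref{flowoutproperties}) makes the transition maps smooth, and near a degenerate closed leaf $K$ the punctured neighbourhood $U\setminus K\subset\mathcal{C}^2$ fibres over a half-open or punctured interval in the standard way, letting us add the single point $[K]$ to $Q$ as a manifold point.

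The second step is to transfer the given open cover. Since each $U_L\supset L$ and $L$ is a leaf (hence a fibre of $q$, or the point $[K]$ for a degenerate leaf), I would shrink each $U_L$ to a smaller $q$-saturated open set $V_L\subset U_L$ still containing $L$: this is possible precisely because $q$ is an open map and the leaves are closed, so a small saturated tube around $L$ fits inside $U_L$ (for regular leaves use the flowout; for degenerate leaves use the neighbourhood from \cref{closeddef}(2) together with the flowouts of nearby regular leaves). The images $W_L = q(V_L)$ then form an open cover of $Q$ indexed by $F$, with $W_L\ni [L]$. Now $Q$ is a $1$-manifold, hence smooth, paracompact-on-each-component, so there is an ordinary smooth partition of unity $\{\chi_L\}_{L\in F}$ on $Q$ subordinate to $\{W_L\}$ with locally finite compactly supported supports. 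Set $\psi_L = \chi_L\circ q$. Then $\sum_L\psi_L = 1$, $\operatorname{supp}\psi_L = q^{-1}(\operatorname{supp}\chi_L)\subset V_L\subset U_L$, and local finiteness is inherited because $q$ is continuous and the fibres are the leaves. Compact support of $\psi_L$ needs a small argument: $\operatorname{supp}\chi_L$ compact in $Q$ does not immediately give $q^{-1}(\operatorname{supp}\chi_L)$ compact in $\mathcal{C}$ since leaves can be non-compact — but here every leaf appearing is either a regular \emph{closed} leaf (a torus, compact) or a degenerate closed leaf (compact), so $q$ is proper over the relevant region and compactness is preserved; if a subtlety remains I would instead only demand (as the definition allows) compactly supported \emph{within the flux-system application}, where the closed leaves are genuinely compact tori and axes. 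Finally $d\psi_L\wedge\nu = q^*(d\chi_L)\wedge q^*(dt) = q^*(d\chi_L\wedge dt) = 0$ since $d\chi_L\wedge dt$ is a $2$-form on the $1$-manifold $Q$, giving condition (3).

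The step I expect to be the main obstacle is making the leaf space $Q$ and the quotient map $q$ rigorous and globally well-defined as a \emph{manifold} — in particular handling the degenerate closed leaves $K$ (the axes, in the toroidal case) so that they become honest manifold points of $Q$ rather than singular points, and checking that the transition functions between flowout charts are smooth where flowouts of different closed leaves overlap. This is essentially a careful chart-patching argument using \cref{flowoutproperties}(2) and (3); the non-Hausdorffness of $Q$ is harmless because partitions of unity only need paracompactness and local Euclidean structure, both of which hold on each connected component once we know $Q$ is a $1$-manifold. Everything else — shrinking the cover to saturated sets, invoking the classical partition of unity on $Q$, and verifying the three defining properties — is routine once this quotient picture is in place. (This same quotient construction is exactly what feeds into \cref{classification}, since the connected $1$-manifold $Q$ attached to a component $C$ is either an interval or a circle, and that dichotomy, together with whether $Q$ has endpoints and whether those endpoints are degenerate-leaf points, is what pins down $C$ as $\mathbb{T}^2\times I$, a solid torus, an open solid torus, or $\mathbb{S}^2\times\mathbb{S}^1$.)
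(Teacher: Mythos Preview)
Your approach via the leaf space $Q$ is conceptually sound and closely parallels the paper's: the paper also ultimately pulls back a partition of unity from a $1$-dimensional parameter, but does so by explicit case analysis rather than constructing $Q$ as an object. On each connected component of $\mathcal{C}^2$ without boundary it proves a dichotomy (\cref{boundarylessfoundation}): either the component is compact, or there is a \emph{global} flowout diffeomorphism $S\times I\to C$ (so the leaf space is the interval $I$ and one pulls back a partition of unity from $I$ directly); the compact case is handled by a finite-cover-and-normalize argument on $M$. Boundary components and degenerate closed leaves are then adjoined one at a time via implications $P(C\setminus K)\Rightarrow P(C)$ (\cref{lem:addonK}, \cref{addonbdry}), using exactness of $\nu$ near $K$ and an explicit bump in the local potential. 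Your leaf-space formulation is a cleaner packaging of the same content.

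There is, however, a genuine gap: the claim that ``non-Hausdorffness of $Q$ is harmless because partitions of unity only need paracompactness and local Euclidean structure'' is false. The line with two origins already admits an open cover (by the two standard copies of $\R$) with no subordinate partition of unity: any function vanishing in a neighbourhood of one origin must vanish on a punctured neighbourhood of $0$, hence cannot equal $1$ at the other origin while summing to $1$. So you \emph{must} prove that $Q$ is Hausdorff, and this is exactly the non-trivial content of the paper's \cref{compactalternative} and \cref{boundarylessfoundation}: if a flowout from a closed leaf is non-injective, the periodicity argument there shows the component is compact and the leaf space is a genuine circle; otherwise every flowout is injective and one global flowout identifies the leaf space with an interval. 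You correctly flag the construction of $Q$ as the main obstacle, but the obstacle is Hausdorffness, not merely chart-patching. Once that is established your pullback argument goes through and, as you note, feeds directly into \cref{classification}.
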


With this, we may prove \cref{thm:etaexists}.

\begin{proof}[Proof of \cref{thm:etaexists}]
Let $F$ denote the set of all axes and regular tori. By Lemmas \ref{etaontubenbhd} and \ref{etaonaxisnbhd}, for each $L \in F$ we may choose a neighbourhood $U_{L}$ of $L$ in $\mathcal{T}$ and a closed 1-form $\eta_{L} \in \Omega^1(U_{L})$ such that $\eta_{L}(B)>0$. Then, since $F$ covers $\mathcal{T}$, the indexed set $(U_{L})_{L \in F}$ is an open cover of $\mathcal{T}$. By \cref{partition of unity}, there exists a partition of unity $\{\psi_{L} : \mathcal{T} \to \R\}_{L \in F}$ subordinated to $\{U_{L}\}_{L \in F}$ and adapted to $\nu$. Define
\begin{equation*}
\eta = \sum_{L \in F} \psi_{L}\eta_{L}    
\end{equation*}
Then, because the $\psi_{L}$ are each non-negative and $\sum_{L \in F} \psi_{L} = 1$, we have
\begin{equation*}
\eta(B) > 0.    
\end{equation*}
Lastly, we observe that
\begin{equation*}
d\eta = \sum_{L \in F} d\psi_{L}\wedge \eta_{L}     
\end{equation*}
so that, because $d\psi_{L} \wedge \nu = 0$ for each $L \in F$, we have
\begin{equation*}
d\eta \wedge \nu = 0.
\end{equation*}
\end{proof}

\subsection{Construction of the partition of unity and classification of the toroidal region}\label{partitionandclassification}

Here we will construct the partition of unity and classify the closed region $\mathcal{C}$ (thereby proving Proposition \ref{classification}). In Section \ref{step1} we establish the partition of unity for the case of a toroidal region without boundary or axes. Then, in Section \ref{step2}, we will extend the result of \ref{step1} to include the situation where there are axes and boundary points.

\subsubsection{Without boundary or axes}\label{step1}
Let $M$ be a manifold without boundary of dimension $n$ and $\nu$ be a closed 1-form on $M$. Here, we will assume that the region $\mathcal{C}^2$ coincides with $M$ (so that $\mathcal{C} = \mathcal{C}^2 = M$). Denote by $\mathcal{F}$ the rank $n-1$ foliation by $\nu$ (consisting of all the regular closed leaves). The following shows that non-injectivity of a flowout implies compactness.

\begin{lemma}\label{compactalternative}
Assume $M$ is connected and there exists a non-injective flowout $\Sigma$ of some $S \in \mathcal{F}$ for some open interval $I$ containing $0$. Then, $M$ is compact.
\end{lemma}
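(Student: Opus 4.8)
\textbf{Proof proposal for \cref{compactalternative}.}

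The plan is to exploit the flowout structure together with the foliation coordinates provided by \cref{flowoutproperties}. First I would fix the non-injective flowout $\Sigma : S \times I \to M$ and locate the failure of injectivity: there exist $(x_0,s_0) \neq (x_1,s_1)$ in $S \times I$ with $\Sigma(x_0,s_0) = \Sigma(x_1,s_1)$. Since $\Sigma(\cdot,s)$ is a diffeomorphism onto its image for each fixed $s$ (part 3 of \cref{flowoutproperties}), and since $\Sigma^*\nu = dt$ (part 1), the two time-parameters must actually differ, $s_0 \neq s_1$; otherwise $x_0 = x_1$. Reparametrising via the flow-composition identity in part 3 of \cref{flowoutproperties}, this says the closed leaf $S_0 = \Sigma(S \times \{s_0\})$ returns to itself after flowing for time $\tau := s_1 - s_0 \neq 0$ along $N$: concretely, writing $y_0 = \Sigma(x_0,s_0)$, the point $y_0$ lies on $S_0$ and also equals $\Sigma(x_1,s_1)$, which the flow identity exhibits as the time-$\tau$ image of the point $\Sigma(x_1,s_0) \in S_0$. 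So the time-$\tau$ flow of $N$ maps (a point of) $S_0$ back into $S_0$; since $S_0$ is a leaf and the time-$\tau$ flow carries leaves to leaves (again part 3), it maps $S_0$ \emph{onto} $S_0$.

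Next I would use this return map to build a compact saturated piece of $M$ and then show it is everything. Let $\phi_\tau$ denote the time-$\tau$ flow of $N$ (defined on a neighbourhood of $S_0$ by the flowout hypothesis), so $\phi_\tau(S_0) = S_0$. The image $K := \Sigma(S \times [\min(s_0,s_1), \max(s_0,s_1)]) = \Sigma(S \times [s_0, s_0+\tau])$ (taking $\tau>0$ without loss of generality) is compact, being the continuous image of the compact set $S \times [s_0, s_0 + \tau]$ — here I use that $S$ is a \emph{closed} leaf hence compact (per the remark following \cref{closeddef}, closed leaves are the compact integral manifolds). I claim $K$ is open in $M$: for an interior time $s \in (s_0, s_0+\tau)$ the flowout is a local diffeomorphism near $S \times \{s\}$, and at the two endpoints the identification $\phi_\tau(S_0) = S_0$ glues the $s = s_0+\tau$ boundary leaf of $K$ to the $s = s_0$ boundary leaf, so every point of $K$ has a $K$-neighbourhood (one passes through the seam using $\phi_\tau$ and the fact that $\Sigma$ is an open map onto a neighbourhood of $S_0$). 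Since $K$ is also closed (compact in a Hausdorff space) and nonempty, connectedness of $M$ forces $K = M$. Hence $M$ is compact.

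The main obstacle I anticipate is the gluing argument showing $K$ is open — one must check carefully that the time-$\tau$ return identification is compatible with the flowout parametrisation on an honest open neighbourhood of $S_0$, not just on $S_0$ itself, and that the resulting quotient description of $K$ (morally $S_0 \times [0,\tau]/(x,0)\sim(\phi_\tau(x),\tau)$, a mapping torus) embeds as an open subset of $M$ rather than merely immersing. This requires invoking the local-diffeomorphism property of $\Sigma$ from \cref{flowoutexistence} near the seam and the openness of $\Sigma(S \times J)$ for small $J$. The rest — compactness of $K$, and the topological $\text{clopen} \Rightarrow$ everything step — is routine. I would also remark that this lemma is the key input to \cref{classification}: the compact case there is exactly the case where some flowout fails to be injective, producing the $\mathbb{T}^2 \times \mathbb{S}^1$-type (mapping torus) topology.
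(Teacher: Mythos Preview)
Your proposal is correct and follows essentially the same route as the paper: locate the non-injectivity to obtain a leaf that the flow of $N$ returns to itself after a positive time, form the compact image $K$ of the flowout over the closed time-interval between the two collision times, argue $K$ is open by using the return identification at the seam together with the local-diffeomorphism/openness of the flowout, and conclude $K = M$ by connectedness. The paper carries out the seam argument you flag as the obstacle by explicitly invoking the flowout composition identities from \cref{flowoutproperties} (introducing auxiliary flowouts $\Sigma'$ of $S' = \Sigma(S \times \{s_0\})$ and then $\Sigma''$ of $\Sigma'(S' \times \{\tau\}) = S'$) to show directly that $K = \Sigma'(S' \times (-\epsilon, \tau + \epsilon))$ for small $\epsilon > 0$, which is exactly the rigorous version of your ``pass through the seam using $\phi_\tau$'' sketch.
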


\begin{proof}
Because $\Sigma$ is not injective but $\Sigma(\cdot,t) : S \to M$ is injective for each $t \in I$, there exists points $(x,s), (y,t) \in S \times I$ with $\Sigma(x,s) = \Sigma(y,t)$ and $s < t$. Now by \cref{flowoutproperties}, we have a flowout $\Sigma'$ of $S' = \Sigma(S,s) \in \mathcal{F}$ with interval $I' = I - s$. With the point $x' = \Sigma(x,s) \in S'$ and time $0 < t' = t-s \in I'$ we get
\begin{equation*}
\Sigma'(x',t') = \Sigma'(\Sigma(x,s),t') = \Sigma(x,t). 
\end{equation*}
On the other hand, since $x' = \Sigma(y,t)$ we have $\Sigma(y,t) \in S'$. So that $\Sigma(x,t) \in S'$ and hence $\Sigma'(x',t') \in S'$. Altogether, we conclude so far that $\Sigma'(S', t') = S'$.

Now, consider the compact subset
\begin{equation*}
K = \Sigma'(S' \times [0,t']) \subset M.
\end{equation*}
We show that $K$ is open. Because $\Sigma'$ is an open map by \cref{flowoutproperties}, it suffices to find $\epsilon > 0$ so that $K = \Sigma'(S' \times (-\epsilon,t'+\epsilon))$. To this end, we consider the flowout $\Sigma''$ of $S' = \Sigma'(S',t')$ with interval $I'' = I'-t'$ and make the following periodicity argument. On the one hand, for $z \in S'$ and $t'' \in I''$, by \cref{flowoutproperties} we have the identity 
\begin{equation*}
\Sigma''(\Sigma'(z,t'),t'') = \Sigma'(z,t'+t'').
\end{equation*}
Since $\Sigma'(S' \times \{t'\}) = S'$, we have in particular that, for $t'' \in I''$,
\begin{equation*}
\Sigma''(S' \times \{t''\}) = \Sigma'(S' \times \{t'+t''\}).    
\end{equation*}
On the other hand, $I = I' \cap I''$ is an open interval containing $0$ and by \cref{flowoutproperties}, for each $z \in S'$ and $r \in I$, we have
\begin{equation*}
\Sigma''(z,r) = \Sigma'(z,r).     
\end{equation*}
Thus for each $r \in I$, 
\begin{equation*}
\Sigma''(S' \times \{r\}) = \Sigma'(S' \times \{r\}).     
\end{equation*}
Now, for $\epsilon > 0$ sufficiently small we have that $[0,\epsilon), (t'-\epsilon,t'] \subset [0,t']$ and $\pm \epsilon \in I$ so that the above relations imply
\begin{align*}
\Sigma'(S' \times [t',t'+\epsilon)) = \Sigma''(S' \times [0,\epsilon)) = \Sigma'(S' \times [0,\epsilon)) \subset K,\\
\Sigma'(S' \times (-\epsilon,0]) = \Sigma''(S' \times (-\epsilon,0]) = \Sigma'(S' \times (t'-\epsilon,t']) \subset K.
\end{align*}
This shows that
\begin{equation*}
K = \Sigma'(S' \times (-\epsilon,t'+\epsilon)). 
\end{equation*}
Thus, $K$ is open, as claimed. To conclude, $M$ being connected implies $M = K$, a compact subset of $M$. That is, $M$ is compact.
\end{proof}

With \cref{compactalternative} the following useful lemma can be proved. The lemma will be used substantially in \cref{partitionandclassification} due to the convenient dichotomy it gives rise to.

\begin{lemma}\label{boundarylessfoundation}
Assume that $M$ is connected. Then, one of the following holds.
\begin{enumerate}
    \item For any $S \in \mathcal{F}$, there exists a flowout $\Sigma$ of $S$ along $N$ with an interval $I$ which is open and contains $0$ such that $\Sigma : S \times I \to M$ is a diffeomorphism.
    \item $M$ is compact.
\end{enumerate}
\end{lemma}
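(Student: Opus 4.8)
The plan is to start from an arbitrary leaf $S \in \mathcal{F}$ and grow a flowout of $S$ to its maximal interval of definition, then show that either this maximal interval is all of $\R$ (or at least large enough that the flowout is a diffeomorphism onto $M$, yielding case 1), or else the flowout fails to be injective/proper at some finite time, in which case \cref{compactalternative} forces $M$ to be compact (case 2). First I would fix the vector field $N$ with $\nu(N)=1$ on $\Omega = M$ (here $\Omega = M$ since $\mathcal{C}^2 = M$), and let $I = (a,b)$ be the maximal open interval containing $0$ for which the flowout $\Sigma : S \times I \to M$ of $S$ along $N$ exists; this exists by \cref{flowoutexistence} and \cref{flowoutproperties}(2), which guarantee compatibility of flowouts on overlapping intervals so that the union over all admissible intervals is itself an admissible interval. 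By \cref{flowoutproperties}(1), $\Sigma^*\nu = dt$, so $\Sigma$ is a submersion, hence an open map, and $\Sigma(\cdot,s)$ is a diffeomorphism of $S$ onto a leaf for each $s$.

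Next I would split into cases according to whether $\Sigma$ is injective. If $\Sigma$ is \emph{not} injective, \cref{compactalternative} applies verbatim and gives that $M$ is compact, which is case 2. So assume $\Sigma$ is injective; then $\Sigma : S \times I \to M$ is an injective open map, hence a diffeomorphism onto the open set $U = \Sigma(S \times I)$. It remains to show that either $U = M$ (giving case 1 with this very $I$), or $M$ is compact. Suppose $U \ne M$; since $M$ is connected and $U$ is open and nonempty, $U$ is not closed, so there is a point $p \in \overline{U} \setminus U$. The key step is to analyze the behaviour of the flowout near $p$: pick a flow box / foliation chart around $p$ adapted to $\nu$ (using $\partial M^*\nu = 0$ is vacuous here since $\partial M = \emptyset$), and show that the leaf through $p$ must be a limit of the leaves $\Sigma(S \times \{s\})$ as $s \to b$ (or $s\to a$). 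Using that $\Sigma(\cdot,s)$ is a diffeomorphism $S \to \Sigma(S\times\{s\})$ and the local product structure, one argues that the leaf $S_p$ through $p$ is diffeomorphic to $S$ and that the flowout can be extended past $b$ unless the "return" phenomenon of \cref{compactalternative} occurs — i.e. unless $S_p$ coincides with $S$ up to the flow, in which case non-injectivity (in the extended flowout) again triggers \cref{compactalternative} and compactness of $M$. Either way we contradict $U \ne M$ with $M$ noncompact, so in the noncompact connected case $\Sigma$ is a diffeomorphism onto $M$, which is case 1.

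The main obstacle I expect is the extension argument at the boundary point $p$: one must show rigorously that $p \in \overline{U}\setminus U$ forces either a genuine extension of the flowout beyond its maximal interval (contradiction) or the periodic-return configuration that makes $M$ compact. The delicate point is controlling the flowout \emph{uniformly over all of $S$} as $s \to b$: a priori the integral curves of $N$ through different points of $S$ could have different maximal existence times, so one needs the compactness of $S$ (it is a closed leaf, and by the remark following \cref{toroidaldef} and standard index theory the relevant closed leaves are compact) together with the local product/flow-box structure near the limiting leaf to get a uniform $\epsilon$ of extra existence time, exactly as in the "elementary topology" step used in \cref{regionisopen} and in the $K = \Sigma'(S'\times(-\epsilon,t'+\epsilon))$ argument inside \cref{compactalternative}. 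Once that uniformity is in hand, the maximality of $I$ is contradicted unless the limiting leaf is an old leaf $\Sigma(S\times\{s_0\})$, which is precisely the hypothesis of \cref{compactalternative}.
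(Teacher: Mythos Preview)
Your plan is sound and will go through, but it is organized differently from the paper's proof. The paper does not argue via a maximal interval and a boundary point. Instead it lets $U$ be the set of all points of $M$ reachable by \emph{some} flowout of $S$ (over some open interval containing $0$), shows $U$ is open (flowouts are local diffeomorphisms, hence open maps) and closed (if a flowout of the leaf $S'$ through a point $y$ meets $U$, one concatenates it with a flowout of $S$ to produce a longer flowout of $S$ whose image contains $y$), and concludes $U=M$ by connectedness. Only then does it assemble a single surjective flowout $\Sigma:S\times I\to M$ by taking $I$ to be the union of all intervals used, and invoke \cref{compactalternative} once at the end to upgrade surjectivity to bijectivity when $M$ is not compact. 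Your maximal-interval route uses the same concatenation step, but packages it as ``extend past the endpoint, contradict maximality'' rather than ``$U$ is closed''; either way is fine, and neither is materially shorter.

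One correction to your outline: in the injective case, once you have $p\in\overline{U}\setminus U$, the leaf $S_p$ cannot meet $U$ (otherwise $S_p=\Sigma(S\times\{s_0\})\subset U$ for some $s_0$, forcing $p\in U$). Hence the concatenation with a small flowout of $S_p$ \emph{always} yields a flowout of $S$ on a strictly larger interval, contradicting maximality of $I$ outright. There is no genuine ``return'' sub-case here and no second appeal to \cref{compactalternative}; that lemma has already been spent in ruling out non-injectivity on $I$. Also, the compactness of the closed leaves that you need for the uniform extension is the content of the remark after \cref{closeddef} (regular closed leaves are exactly the compact integral manifolds), not the torus-specific remark after \cref{toroidaldef}.
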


\begin{proof}
Let $S \in \mathcal{F}$. Then, consider the subset $U$ of points $x \in M$ for which there exists an open interval $I$ containing $0$ and a flowout $\Sigma$ of $S$ with interval $I$ such that $x \in \Sigma(S\times I)$. This set $U$ is open because each flowout $\Sigma$ of $S$ with any open interval $I$ containing $0$ is an open map by \cref{flowoutproperties} and $U$ a union of images $\Sigma(S\times I)$ of such maps.

We now show that $U$ is closed. Let $y \in M$ and consider the unique $S' \in \mathcal{F}$ containing $y$. Then, let $\Sigma' : S' \times I' \to M$ be a flowout of $S'$ with some interval $I'$ containing $0$ and consider $\Sigma'(S' \times I')$. Suppose that $\Sigma'(S' \times I') \cap U \neq \emptyset$. Then, there exists a flowout $\Sigma$ of $S$ with interval $I$ such that $\Sigma(S\times I) \cap \Sigma'(S' \times I') \neq \emptyset$. Then, there must exist $t \in I$ and $t' \in I'$ such that $\Sigma(S \times \{t\}) = \Sigma'(S' \times \{t'\})$. Consider now the flowout $\Sigma''$ of $S'' = \Sigma(S \times \{t\})$ with interval $I'' = I-t'$. Recall $\Sigma''$ has the identity, for $x' \in S'$ and $t'' \in I''$,
\begin{equation*}
\Sigma''(\Sigma'(x',t'),t'') = \Sigma'(x',t'+t'').
\end{equation*}
Now, consider the open interval $\tilde{I} = I \cup (I''+t)$ and the map $\tilde{\Sigma} : S \times \tilde{I} \to M$ given by
\begin{equation*}
\tilde{\Sigma}(x,s)= 
\begin{cases}
    \Sigma(x,s) , & \text{if } s \leq t\\
    \Sigma''(\Sigma(x,t),s-t) & \text{otherwise}.
\end{cases}
\end{equation*}
It is clear that $\tilde{\Sigma}$ is a flowout of $S$ with interval $\tilde{I}$. Lastly,
\begin{equation*}
\Sigma'(S' \times I') \subset \tilde{\Sigma}(S \times \tilde{I}) \subset U.
\end{equation*}
Thus, if $y \in M \backslash U$, then $\Sigma'(S' \times I') \subset M \backslash U$. Since $\Sigma'$ is an open map and $y \in \Sigma'(S' \times I')$, this is a neighbourhood of $y$ contained in $M \backslash U$. Thus, $U$ is closed as claimed. Lastly, since $U \neq \emptyset$ and $M$ is connected, we must have that $U = M$.

Thus, for every $y \in M$, we may choose an open interval $I_y$ containing $0$ such that the flowout $\Sigma_y$ of $S$ with interval $I_y$ is such that there exists $x \in S$ and $t \in \iota_x$ such that $\Sigma_y(x,t) = y$. Then, consider the open interval $I = \cup_{y \in M}I_y$ containing $0$ and the map $\Sigma : S \times I \to M$ defined by, for $x \in S$,
\begin{equation*}
\Sigma(x,t) = \Sigma_y(x,t) \text{ if there exists } y \in M \text{ such that } t \in I_y.
\end{equation*}
It is clear that this is well-defined and is a flowout of $S$ with interval $I$. Moreover, $\Sigma$ is onto $M$. Supposing that $M$ is not compact, by \cref{compactalternative}, $\Sigma$ must be injective. Thus, because $\Sigma$ is a bijection and local diffeomorphism, $\Sigma$ must be a diffeomorphism.
\end{proof}

This enables us to establish the partition of unity in the present situation.

\begin{lemma}
Let $\{U_S\}_{S \in \mathcal{F}}$ be an open cover of $M$ such that for each $S \in \mathcal{F}$, $U_S \supset S$. Then, there exists a partition of unity $\{\psi_S : M \to \R\}_{S \in \mathcal{F}}$ adapted to $\nu$ on $M$ and subordinated to $\{U_S\}_{S \in \mathcal{F}}$.
\end{lemma}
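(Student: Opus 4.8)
The plan is to use the dichotomy of \cref{boundarylessfoundation}. Working one connected component of $M$ at a time (and reassembling the families at the end), I may assume $M$ is connected; since $\mathcal{C}^2 = M$, the form $\nu$ is nowhere zero and every leaf of $\mathcal{F}$ is compact. \cref{boundarylessfoundation} then gives one of two situations: (a) some (equivalently, every) leaf $S$ admits a global flowout $\Sigma : S\times I \to M$ that is a diffeomorphism, with $I$ an open interval containing $0$; or (b) $M$ is compact. My aim in both cases is to manufacture a \emph{proper} surjective submersion $\pi : M \to Q$ onto a connected boundaryless $1$-manifold $Q$ carrying a nowhere-zero $1$-form $\omega_Q$, such that $\pi^*\omega_Q = \nu$ and the fibres of $\pi$ are exactly the leaves in $\mathcal{F}$. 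Once this is done the adapted partition of unity practically builds itself: for any $\chi\in C^{\infty}(Q)$ one has $d(\chi\circ\pi) = \pi^*(d\chi)$, and $d\chi$ is a function multiple of $\omega_Q$ (every $1$-form on a $1$-manifold is), so $d(\chi\circ\pi)$ is a function multiple of $\nu$ and hence wedges to zero against $\nu$ automatically.

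Case (a) is immediate: fix $S\in\mathcal{F}$ and the diffeomorphism $\Sigma : S\times I\to M$, and set $Q = I$, $\omega_Q = dt$, $\pi = t\circ\Sigma^{-1}$. Then $\pi^*\omega_Q = \Sigma^*\nu = dt$ by \cref{flowoutproperties}, the fibres $\pi^{-1}(s) = \Sigma(S\times\{s\})$ exhaust $\mathcal{F}$ since $\Sigma$ carries the product foliation to $\mathcal{F}$, and $\pi$ is proper since $\Sigma$ is a diffeomorphism onto $M$. Case (b) is where the real work lies. I would first produce a global vector field $N$ with $\nu(N)=1$ — possible by a partition-of-unity argument since $\nu$ is nowhere zero — which is complete as $M$ is compact; let $\varphi_t$ be its flow. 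Since $\mathcal{L}_N\nu = 0$ (as $\nu$ is closed and $\iota_N\nu\equiv 1$), the $\varphi_t$ are $\nu$-preserving and hence permute the leaves of $\mathcal{F}$. Fix a leaf $S$ and consider the closed subgroup $G = \{t\in\R : \varphi_t(S) = S\}$ of $\R$. It is not all of $\R$, since $\nu(N)=1$ forbids $N$ from being tangent to $S$; and it is not $\{0\}$, for otherwise $\Sigma(x,t) := \varphi_t(x)$ would be an \emph{injective} flowout $S\times\R\to M$ (two leaves that meet are equal), whose image is open ($\Sigma$ is an open map by \cref{flowoutproperties}) and, using that the leaves are compact so that a subsequential limit of the leaves $\varphi_{t_n}(S)$ lies again in the $\varphi$-orbit of $S$, also closed, hence all of $M$ — forcing $M\cong S\times\R$, contradicting compactness. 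Therefore $G = c\Z$ for some $c>0$, and $\theta(\varphi_t(x)) := t \bmod c$ is a well-defined smooth surjective submersion $\pi := \theta : M \to Q := \R/c\Z$ with $\theta^*(d\theta) = \nu$, proper because $M$ is compact, whose fibres $\varphi_s(S)$ run over all of $\mathcal{F}$.

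With $\pi : M\to Q$ (and $\omega_Q$) in hand, I would finish uniformly. Each leaf equals a unique fibre of $\pi$, so $S\mapsto q(S)$ is a bijection $\mathcal{F}\to Q$; write $S_q = \pi^{-1}(q)$. Because $\pi$ is proper it is closed, so for each $q\in Q$ the closed set $\pi(M\setminus U_{S_q})$ avoids $q$, and I pick an open arc $J_q\ni q$ with $\pi^{-1}(J_q)\subset U_{S_q}$, choosing $J_q\neq Q$ when $Q$ is a circle. Since $Q$ is a $1$-manifold it admits a locally finite partition of unity $\{\rho_\alpha\}$ subordinate to $\{J_q\}_{q\in Q}$ by compactly supported nonnegative functions; grouping the $\rho_\alpha$ according to which $J_q$ contains their support produces nonnegative $\chi_q\in C^{\infty}(Q)$ with $\operatorname{supp}\chi_q\subset J_q$, $\{\operatorname{supp}\chi_q\}$ locally finite, $\sum_q\chi_q = 1$, and $\chi_q$ compactly supported (so $\chi_q\circ\pi$ is too, by properness). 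Setting $\psi_S := \chi_{q(S)}\circ\pi$ — where $\chi_{q(S)}$ is the zero function for all but the finitely many, or locally finitely many, selected leaves — one checks directly that $\operatorname{supp}\psi_S = \pi^{-1}(\operatorname{supp}\chi_{q(S)})\subset\pi^{-1}(J_{q(S)})\subset U_S$ is compact, that $\{\operatorname{supp}\psi_S\}$ is locally finite (it pulls back the locally finite family on $Q$), that $\sum_S\psi_S = (\sum_q\chi_q)\circ\pi = 1$, and that $d\psi_S = \pi^*(d\chi_{q(S)})$ is a function multiple of $\pi^*\omega_Q = \nu$, whence $d\psi_S\wedge\nu = 0$. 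The genuine obstacle in all of this is the periodicity dichotomy in Case (b): pinning down that a compact leaf of the flow of $N$ must return to itself, i.e.\ that $M$ then has the structure of a mapping torus with $\nu$ the pullback of $d\theta$; everything after that is the standard picture of a manifold foliated by the fibres of a submersion, plus routine partition-of-unity bookkeeping to index the family by all of $\mathcal{F}$.
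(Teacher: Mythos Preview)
Your approach is correct and genuinely different from the paper's. Both proofs split into the compact and non-compact connected cases via \cref{boundarylessfoundation}, and in the non-compact case the two arguments are essentially identical: the global flowout $\Sigma : S\times I \to M$ reduces everything to building a partition of unity on the interval $I$ and pulling back. The divergence is in the compact case. The paper takes the direct route: for each leaf $S$ it constructs a single bump function $\rho_S'$ supported in a flowout tube inside $U_S$, with $d\rho_S'\wedge\nu = 0$ by construction; compactness of $M$ then gives a finite subcover, and one normalizes by the (positive) sum. No global quotient to a circle is needed. You instead prove that $M$ is a mapping torus: the return-time group $G = \{t : \varphi_t(S) = S\}$ is a nontrivial proper closed subgroup of $\R$, so $G = c\Z$ and $M$ fibers over $\R/c\Z$ with $\nu$ the pullback of $d\theta$. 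This is more structural and lets you treat both cases uniformly via a proper submersion $\pi : M \to Q$ onto a $1$-manifold, after which the adapted partition is automatic because every $1$-form on $Q$ is a multiple of $\omega_Q$.

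The one place your argument is thin is the closedness of $\mathrm{im}\,\Sigma$ when $G = \{0\}$: ``a subsequential limit of the leaves $\varphi_{t_n}(S)$ lies again in the $\varphi$-orbit of $S$'' is the right idea but not a proof. The clean way to fill this in is to take any boundary point $y$ of $\mathrm{im}\,\Sigma$, invoke \cref{flowoutexistence} to get a flowout neighbourhood $V$ of the leaf $S_y$ through $y$, observe that $V$ meets $\mathrm{im}\,\Sigma$, and use \cref{flowoutproperties} to see that the leaf in $V$ hit by some $\varphi_t(S)$ is $\varphi_s(S_y)$ for some small $s$, forcing $S_y = \varphi_{t-s}(S)\subset \mathrm{im}\,\Sigma$. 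You flag this step as the genuine obstacle, and it is; once filled in, your argument is complete. The trade-off is that you do more work up front (establishing the fibration, a baby Tischler theorem) in exchange for a cleaner endgame, while the paper's compact case is shorter but treats the two cases separately.
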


\begin{proof}
First, assume that $M$ is connected and compact. For each $S \in \mathcal{F}$, an open interval $I_S$ containing $0$ with a flowout $\Sigma_S' : S \times I_S' \to M$ which is a diffeomorphism onto its open image in $M$. Now, for each $S \in \mathcal{F}$, ${\Sigma'_S}^{-1}(U_S)$ is a neighbourhood of $S \times \{0\}$ in $S \times I_S'$. Thus, we may choose an open interval $I_S \subset I_S'$ containing $0$ such that $\Sigma_S'(S \times I_S) \subset U_S$ and hence a flowout $\Sigma_S : S \times I_S \to M$ which is a diffeomorphism onto its open image in $M$ contained in $U_S$. Now, for each $S \in \mathcal{F}$, choose a smooth bump function $\psi_S' : I_S \to \R$ with $0 \leq \psi_S' \leq 1$ which is compactly supported and $\psi_S'(0) = 1$. Then we obtain the function $\psi_S : S \times I_S \to \R$ given by $\psi_S(x,t) = \psi_S'(t)$ which is compactly supported in $S \times I_S$. Moreover, if $t_S : S \times I_S \to \R$ denotes projection into $\R \supset I_S$, we have the relation
\begin{equation*}
d \psi_S \wedge dt_S = 0.    
\end{equation*}
By use of $\Sigma_S$, this then gives a function $\rho_S' : M \to \R$ which $1$ on $S$ and is compactly supported in $U_S$ and additionally satisfies
\begin{equation*}
d\rho_S' \wedge \nu = 0.
\end{equation*}
Now, the open subsets $V_S = \{\rho_S' \neq 0\} \subset U_S$ form a covering of $M$ and so there exists $N \in \mathbb{N}$ and leaves $S_1,...,S_N \in \mathcal{F}$ such that $(V_{S_i})_{i = 1}^N$ is a covering of $M$. Then we obtain the smooth positive function
\begin{equation*}
P = \sum_{i = 1}^N \rho_{S_i}.
\end{equation*}
Lastly, we define a family of smooth functions $\{\rho_{S} : M \to \R\}_{S \in \mathcal{F}}$ as follows. If $S = S_i$ for some $i \in \{1,...,n\}$, then set $\rho_S = \rho_{S_i}'/P$ and otherwise, set $\rho_S = 0$. Then, clearly this is a partition of unity subordinated to the open cover $\mathcal{O}$ which, by virtue of the identity $dP \wedge \nu = 0$, additionally satisfies
\begin{equation*}
d\rho_S \wedge \nu = 0
\end{equation*}
for all $S \in \mathcal{F}$.

Now assume that $M$ is connected and non-compact. In this case, we may fix an open interval $I$ containing $0$, a leaf $S^* \in \mathcal{F}$, and a flowout $\Sigma : S^* \times I \to M$ of $S^*$ which is a diffeomorphism. Then, we have the open cover $\{\Sigma^{-1}(U_S)\}_{S \in \mathcal{F}}$ of $S^* \times I$. Now, for $z \in I$, setting $S_z = \Sigma(S^*,z)$, $\Sigma^{-1}(U_{S_z})$ is a neighbourhood of $S^* \times \{z\}$. Thus, for each $z \in I$, (using compactness of $S^*$), we may choose an open interval $J_z$ of $z$ such that $S^* \times J_z \subset \Sigma^{-1}(U_{S_z})$. Now, fix a partition of unity $\{\psi_z' : I \to \R\}_{z \in I}$ subordinated to the open cover $\{J_z\}_{z \in I}$. For each $z \in I$, set $\psi_z : S^* \times I \to \R$ where $\psi_z(x,t) = \psi_z'(t)$. Then, for each $z \in I$, we have the identity
\begin{equation*}
d\psi_z \wedge dt = 0
\end{equation*}
where $t : S^* \times I \to \R$ is projection into $\R \supset I$. Using the diffeomorphism $\Sigma$, and in particular that $I \ni z \mapsto S_z \in \mathbb{F}$ is a bijection, this gives a partition of unity $\{\rho_S : M \to \R\}_{S \in \mathcal{F}}$ adapted to $\nu$ on $M$ and subordinated to $\{U_S\}_{S \in \mathcal{F}}$.

In summary, this shows that the proposition is true if $M$ is connected. The general case now follows by considering the connected components of $M$.
\end{proof}

We will now extend the foregoing to account for a general closed region containing both boundary and degenerate closed leaves.

\subsubsection{With boundary and axes}\label{step2}

Let $M$ be a manifold with boundary and $\nu$ be a closed 1-form such that $\partial M^*\nu = 0$. If $U \subset \mathcal{C}$ is an open subset partitioned by closed leaves and degenerate closed leaves of $\nu$, then the 1-form $U^*\nu$ is closed and satisfies $(\partial U)^*(U^*\nu) = 0$ since $\partial U = \partial M \cap U$. Moreover, the closed region $\mathcal{C}_U$ of $U^*\nu$ is all of $U$. Let $F_U$ denote the set of closed leaves and degenerate closed leaves in $U$. Let $P(U)$ denote the following statement: \begin{quote}
``For any open cover $\{U_{L}\}_{L \in F_U}$ of $U$ such that $U_{L} \supset L$ for all $L \in A$, there exists a partition of unity adapted to $\nu$ on $U$ and subordinated to $\{U_{L}\}_{L \in F_U}$."    
\end{quote}

\begin{lemma}\label{lem:addonK}
Let $C$ be a connected component of $\mathcal{C}$. Then $\interior C \backslash \mathcal{C}^1$ is a connected open submanifold. Let $K$ be a degenerate closed leaf contained in $C$. Then, for any neighbourhood $U_K$ of $K$, there exists a codimension 0 compact connected manifold $R$ with boundary such that $\partial R$ is a closed leaf of $\nu$ and $K \subset \interior R \subset U_K$. Moreover, $P(C \backslash K) \Rightarrow P(C)$.
\end{lemma}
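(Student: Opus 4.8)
The plan is to dispose of the statement in three parts: (i) that $\interior C\setminus\mathcal{C}^1$ is a connected open submanifold; (ii) the existence of the ``capping'' region $R$; and (iii) the implication $P(C\setminus K)\Rightarrow P(C)$. Part (i) is soft, part (iii) is a partition-of-unity bookkeeping argument once $R$ is available, and the genuine work is part (ii), for which the engine is \cref{boundarylessfoundation}.

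For (i) (we may assume $\interior C\neq\emptyset$, else there is no degenerate closed leaf in $C$ and everything is vacuous): $C$ is open in $M$ by \cref{regionisopen}, hence a connected manifold with boundary, so $\interior C=C\cap\interior M$ is connected. The set $\mathcal{C}^1\cap\interior C$ is a locally finite union of submanifolds of $\interior M$ of codimension $\ge 2$ — distinct degenerate closed leaves are isolated from one another by their defining neighbourhoods — and it is closed in $\interior C$, since an interior accumulation point would itself lie on a degenerate closed leaf, contradicting that isolation. Thus $M':=\interior C\setminus\mathcal{C}^1$ is open in $M$, and deleting a closed codimension-$\ge2$ set from the connected $\interior C$ leaves it connected. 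I record for use below: any regular closed leaf of $\nu$ meeting $M'$ lies entirely in $M'$ — by the dichotomy in \cref{flowoutexistence} it is contained in $\interior M$, it is disjoint from every degenerate closed leaf, and being a connected subset of $\mathcal{C}$ meeting $C$ it lies in $C$. Hence the foliation of $M'$ by $\nu|_{M'}$ has all leaves compact, i.e. $M'$ coincides with its own $\mathcal{C}^2$-region.

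For (ii): using the definition of a degenerate closed leaf together with the Tubular Neighbourhood Theorem, fix a precompact tubular neighbourhood $V$ of $K$ with $\overline V\subset U_K\cap\interior C$ and $V\setminus K\subset\mathcal{C}^2$, so $V\setminus K\subset M'$. Apply \cref{boundarylessfoundation} to the connected boundaryless manifold $M'$: either $M'$ is compact, or there is a diffeomorphism $\Sigma\colon S\times I\to M'$, with $I=(\alpha,\omega)$ an open interval and $S$ a leaf, carrying the foliation to the slices $S\times\{t\}$ and satisfying $\Sigma^*\nu=dt$. The first alternative is impossible, since $M'$ is a proper nonempty open subset of the connected manifold $\interior C$, hence not closed in it, hence not compact. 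So $M'\cong S\times(\alpha,\omega)$. Now $V\setminus K$ is a connected open subset of this cylinder which (shrinking $V$) clusters in $M$ only onto $K$; it is therefore a neighbourhood of exactly one end of the cylinder, say the $\alpha$-end. This forces the slices $\Sigma(S\times\{t\})$ to Hausdorff-converge to $K$ as $t\to\alpha^+$, and forces $V\setminus K\subset\Sigma(S\times(\alpha,\alpha'))$ for some $\alpha'\in(\alpha,\omega)$. Choosing $t_0\in(\alpha,\alpha')$, put $R:=K\cup\Sigma(S\times(\alpha,t_0])$. Then $\partial R=\Sigma(S\times\{t_0\})$ is a regular closed leaf of $\nu$; $R\subset V\subset U_K$; $K\subset\interior R$, because a small punctured neighbourhood of $K$ consists of slices with parameter near $\alpha$; and $R$ is compact, being closed in the compact set $\overline V$ — its only limit points not already in $\Sigma(S\times(\alpha,t_0])$ lie in $K$, by the Hausdorff-convergence just noted. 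I expect this identification of the $\alpha$-end of $M'$ with $K$, and the attendant properness $\overline R\subset V$, to be the one genuinely delicate point; the rest of (ii) is formal.

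For (iii): apply \cref{flowoutexistence} and \cref{flowoutproperties} to the regular closed leaf $\partial R\subset\interior M$ to get a flowout $\Sigma_1\colon\partial R\times(-\epsilon,\epsilon)\to U_K$, a diffeomorphism onto an open set with $\Sigma_1^*\nu=dt$ and, after replacing $N$ by $-N$ if necessary, $\Sigma_1(\partial R\times(-\epsilon,0])\subset R$. Fix $g\colon(-\epsilon,0]\to[0,1]$ smooth with $g\equiv1$ near $-\epsilon/2$ and $g\equiv0$ near $0$, and define $\psi_K\colon C\to\R$ by $\psi_K\equiv1$ on $R\setminus\Sigma_1(\partial R\times(-\epsilon/2,0])$, $\psi_K=g(t)$ in the flowout coordinates on $\Sigma_1(\partial R\times(-\epsilon/2,0])$, and $\psi_K\equiv0$ off $R$. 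This is smooth, is identically $1$ on a neighbourhood of $K$, is compactly supported in $\interior R\subset U_K$, and satisfies $d\psi_K\wedge\nu=0$ (in the flowout coordinates $d\psi_K\wedge\nu=g'(t)\,dt\wedge dt=0$, and $d\psi_K=0$ elsewhere). Given an open cover $\{U_L\}_{L\in F_C}$ of $C$ with $U_L\supset L$, the sets $U_L\cap(C\setminus K)$ for $L\in F_C\setminus\{K\}=F_{C\setminus K}$ cover $C\setminus K$ by open sets containing the corresponding leaves, so $P(C\setminus K)$ furnishes an adapted partition of unity $\{\phi_L\}_{L\in F_{C\setminus K}}$ subordinate to it. Put $\psi_L:=(1-\psi_K)\phi_L$ for $L\neq K$, extended by $0$ across $K$ (smooth, since $1-\psi_K$ vanishes near $K$). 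Then $\sum_{L\in F_C}\psi_L=\psi_K+(1-\psi_K)\sum_{L\neq K}\phi_L=1$ (using $\sum_{L\neq K}\phi_L\equiv1$ on $C\setminus K$); each $d\psi_L\wedge\nu=-\phi_L\,d\psi_K\wedge\nu+(1-\psi_K)\,d\phi_L\wedge\nu=0$; $\operatorname{supp}\psi_K\subset\interior R\subset U_K$ while for $L\neq K$ one has $\operatorname{supp}\psi_L\subset\operatorname{supp}\phi_L\subset U_L$, all compact; and the collection is locally finite, since only the single set $\operatorname{supp}\psi_K$ has been added and every other $\psi_L$ vanishes near $K$. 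This exhibits the required adapted partition of unity on $C$, so $P(C\setminus K)\Rightarrow P(C)$.
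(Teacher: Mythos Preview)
Your overall architecture is right, and parts (i) and (iii) are essentially the same as the paper's. The gap is in part (ii), precisely at the point you flag as ``the one genuinely delicate point''. You assert that $V\setminus K$ is a neighbourhood of \emph{exactly one} end of the cylinder $M'\cong S\times(\alpha,\omega)$, and that $V\setminus K\subset\Sigma(S\times(\alpha,\alpha'))$; from this you build $R=K\cup\Sigma(S\times(\alpha,t_0])$ and claim $R\subset V$ and $K\subset\interior R$. But ``clusters in $M$ only onto $K$'' does not by itself force either conclusion. What is missing is the exactness of $\nu$ near $K$: since a tubular neighbourhood $V$ deformation retracts onto $K$ and $K^*\nu=0$, one has $\nu|_V=dg$ with $g|_K=0$, and on the connected set $V\setminus K$ one has $g=\tau+c$ for a single constant $c$ (where $\tau$ is the cylinder coordinate). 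If both ends approached $K$ you would get $\alpha+c=0=\omega+c$, a contradiction; this is what pins down ``exactly one end''. The same function $g$ then gives the containment $\Sigma(S\times(\alpha,t_0])\subset V$ for small $t_0$ (via $g\to 0$ near $K$) and the fact that a full neighbourhood of $K$ in $C$ has $\tau$ close to $\alpha$, which is what you need for $K\subset\interior R$ and for $R$ to be a smooth manifold with boundary. Without this exactness step, the ``therefore'' in your sentence is not justified.

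The paper makes this exactness the backbone of its construction: it glues the primitive on $V$ with the primitive $\tau$ on $M'$ (they agree up to a constant on the connected overlap $V\setminus K$) to get a global $f$ on $C_0=M'\cup K$ with $f|_K=0$, then takes a compact disk bundle $\tilde R\subset U_K$ around $K$ and sets $R=\{x\in\tilde R:f(x)\le y/2\}$ for $y=\min_{\partial\tilde R}f>0$. This sidesteps the end-identification entirely and yields a single regular closed leaf $\partial R$ directly as a level set of $f$. Your approach can be completed by inserting the exactness argument, but as written it asserts the crucial step rather than proving it.
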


The proof of \cref{lem:addonK} relies upon the following relatively well-known result.

\begin{proposition}\label{connectedcomplement}
Let $X$ be a connected manifold and $K$ be an embedded submanifold of $X$ of codimension at least $2$ which is also a closed as a subset of $X$. Then $X\backslash K$ is connected.
\end{proposition}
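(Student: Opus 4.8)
The plan is to reduce the statement to a transversality fact: a path in $X$ can be perturbed off $K$ because $K$ has codimension at least $2$. Concretely, I would first dispose of trivial cases ($X=\emptyset$ is vacuous; otherwise $\operatorname{codim}K\ge 2$ forces $K\subsetneq X$, and $X\setminus K$ is open and nonempty since $K$ is closed), and then fix arbitrary $p,q\in X\setminus K$, aiming to join them by a path lying inside $X\setminus K$.

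Next I would invoke path-connectedness of the connected manifold $X$ to get a continuous path from $p$ to $q$, smooth it rel endpoints (Whitney approximation) to a smooth $\gamma\colon[0,1]\to X$, and then apply the relative transversality homotopy theorem to replace $\gamma$, rel $\{0,1\}$, by a smooth $\gamma'$ transverse to $K$. The key dimension count then finishes it: if $\gamma'(t_0)\in K$ for some $t_0$, transversality would give $\operatorname{im}(d\gamma'_{t_0})+T_{\gamma'(t_0)}K=T_{\gamma'(t_0)}X$ and hence $1+(\dim X-\operatorname{codim}K)\ge\dim X$, i.e.\ $\operatorname{codim}K\le1$ --- impossible. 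So $\gamma'$ lies entirely in $X\setminus K$, putting $p$ and $q$ in the same path-component; since $p,q$ were arbitrary and manifolds are locally path-connected, $X\setminus K$ is connected.

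The main obstacle I anticipate is bookkeeping rather than substance: making the transversality perturbation relative to the endpoints (harmless here, since $p,q\notin K$ and $K$ is closed, so $\gamma$ is already transverse to $K$ near $0$ and $1$, and the relative version of the theorem applies), and --- if one insists on the statement for manifolds with boundary meeting $K$ --- first pushing $\gamma$ into $\interior X$; in our application $K\subset\interior C$, so this last point does not arise. As a fallback avoiding smooth transversality altogether, I would instead argue point-set-theoretically: around any $x\in K$ a slice chart identifies a neighbourhood with $\R^{n-c}\times\R^{c}$, where $c=\operatorname{codim}K\ge2$ and $K$ corresponds to $\R^{n-c}\times\{0\}$, so the punctured chart $\R^{n-c}\times(\R^{c}\setminus\{0\})$ is connected; this lets one define a map $X\to\pi_0(X\setminus K)$ (sending $x\notin K$ to its component and $x\in K$ to the component containing such a punctured chart, which is independent of the chart because overlapping punctured charts meet), check it is locally constant, and conclude it is constant on the connected $X$ --- so $\pi_0(X\setminus K)$ is a single point.
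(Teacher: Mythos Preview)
Your proof is correct, and in fact the paper does not provide a proof of this proposition at all: it is stated as a ``relatively well-known result'' and then immediately used to prove \cref{lem:addonK}, with only the remark that the closedness hypothesis on $K$ could perhaps be dropped. So there is nothing in the paper to compare your argument against.

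Both strategies you outline are standard and sound. The transversality argument is the quicker one once the machinery is in hand: the relative transversality theorem applies because $K$ is closed and $p,q\notin K$, so $\gamma$ is already transverse to $K$ on a neighbourhood of $\{0,1\}$, and the dimension count $1<\operatorname{codim}K$ forces $\gamma'\cap K=\emptyset$. Your fallback point-set argument is also fine; the one step worth spelling out is local constancy of your map $X\to\pi_0(X\setminus K)$ at points $x\in K$: for $y$ in a slice chart $U$ about $x$, either $y\notin K$ and then $y\in U\setminus K$, or $y\in K$ and a slice chart $V$ about $y$ has $(U\cap V)\setminus K\neq\emptyset$ since $K$ has empty interior, so $V\setminus K$ meets $U\setminus K$. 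Either way the value at $y$ agrees with the value at $x$.
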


Although \cref{connectedcomplement} may hold without assuming that $K$ is a closed subset, it is sufficient for our purposes to ignore this greater generality. We now prove \cref{lem:addonK}.

\begin{proof}[Proof of \cref{lem:addonK}]
First observe that $\interior C = C \backslash \partial C$ is connected. Now, $\mathcal{C}^1 \cap \interior C$ is a union of (disjoint) degenerate closed leaves where each leaf has a neighbourhood isolating it from the rest. On the other hand, $\mathcal{C}^1 \cap C$ is closed in $C$ (being the critical set of $\nu$ in $C$) and thus $\mathcal{C}^1 \cap \interior C = \mathcal{C}^1 \cap C$ is closed in $\interior C$. Hence, any sub-union of these leaves is also a closed subset of $\interior C$. We can write a union  
\begin{equation*}
\mathcal{C}^1 \cap \interior C = K_0 \cup ... \cup K_{n-2}
\end{equation*} 
where $K_i$ is the union of all degenerate closed leaves of dimension $i$ for $i = 0,...,n-2$. In particular, each $K_i$ is an embedded submanifold of $C$ of dimension $i$ which is also a closed subset of $C$. Thus, we can use \cref{connectedcomplement} inductively (on $i$) to infer that $\interior C \backslash \mathcal{C}^1$ is a connected open submanifold. 

As preliminaries for the remaining, consider $C_0 = (\interior C \backslash \mathcal{C}^1) \cup K$ which is also a connected open submanifold from the above argument. Because $K$ is embedded in $C_0$, by the Tubular Neighbourhood Theorem, there exists a neighbourhood $U$ of $K$ diffeomorphic to the normal bundle $\mathcal{V}(K)$ of $K$ in $C_0$. Since $U$ deformation retracts to $K$ where $K^*\nu = 0$ is in particular exact, $\nu$ is exact in $U$. Moreover, $C_0\backslash K$ is connected by \cref{connectedcomplement}. Thus, \cref{boundarylessfoundation} gives the existence of a flowout $\Sigma$ of some closed leaf $S$ of $\nu$ in $C_0$ with some open interval $I$ containing $0$ which is a diffeomorphism $S \times I \to C_0 \backslash K$. Recall also that $\Sigma^*\nu = dt$ where $t : S \times I \to \R$ is projection into $\R \supset I$. Altogether, $\nu$ is also exact on $C_0\backslash K$. Lastly, because $U \cap (C_0\backslash K) = U \backslash S$ is connected, $\nu$ is exact on $C_0$. Again, because $K^*\nu = 0$, there exists a unique $f : C_0 \to \R$ satisfying $\nu = df$ and $f|_K = 0$. The formula $\Sigma^*\nu = dt$ shows that there exists at most one closed leaf $S'$ in $C_0$ for which $(f|_{C_0\backslash K})^{-1}(0) = S'$. If there is such an $S'$, set $C_1$ to be the connected component $C_1$ of $C_0 \backslash S'$ containing $K$. Otherwise, set $C_1 = C_0$.

We will now simultaneously show the existence of such $R$ in the lemma and prepare for the implication involving partitions of unity. For this, fix an open neighbourhood $U_K$ of $K$. 

The set $K$ is the set of global minima for $\tilde{f} = f|_{C_1}$ (or maxima and in which case, perform the trivial adjustments to the remainder of the proof). We may apply the Tubular Neighbourhood Theorem in $U_K \cap C_1$ to obtain a neighbourhood $\tilde{U}$ of $K$ contained in $U_K$ diffeomorphic to the normal bundle $\mathcal{V}_1(K)$ of $K$ in $C_1$. We may then fix a disk bundle $D(\mathcal{V}_1(K))$ of $\mathcal{V}_1(K)$ \cite{Mukherjee2015}; this is a fibre bundle over $K$ with typical fibre the (closed) unit disk $D$ and is thus compact. Moreover, $D(\mathcal{V}_1(K))$ is a smooth manifold with boundary. Using the diffeomorphism $\mathcal{V}_1(K) \to \tilde{U}$, we thus obtain a codimension $0$ compact manifold $\tilde{R}$ with boundary such that $K \subset \interior \tilde{R}$. Observe by the choice of $C_1$ that $\tilde{f}$ does not obtain its minimum of $0$ on $\partial \tilde{R}$. Denoting by $y$ the positive minimum of $\tilde{f}$ on $\partial \tilde{R}$, the value $y/2$ (for instance) is a regular value of $\tilde{f}$ in $\tilde{R}$. Thus, we obtain the compact manifold $R = \{x \in \tilde{R} : f(x) \leq y/2\}$ with boundary $\partial R = \{x \in \tilde{R} : f(x) = y/2\}$ is a compact regular level set of $\tilde{f}$ in $\tilde{R}$. It is then easy to see that $\partial R$ is a closed leaf of $\nu$. Now, consider a smooth function $h : \R \to \R$ with $0 \leq h \leq 1$ supported in $(-y/2,y/2)$ satisfying $h(x) = 1$ for $x \in (-y/3,y/3)$ and set $\psi_k = h \circ f : C \to \R$. This is a smooth function with $0 \leq \psi_k \leq 1$ which is compactly supported in $U_K$ with $\psi_K = 1$ in a neighbourhood of $K$.

Now, suppose the statement $P(C \backslash K)$ is true. Let $A$ denote the set of closed leaves and degenerate closed leaves in $C$ and $\{U_{\alpha}\}_{\alpha \in A}$ be an open cover of $C$ whereby $U_{\alpha} \supset \alpha$ for each $\alpha \in A$. Set $\tilde{A} = A \backslash \{K\}$ and consider the open cover $\{U_{\alpha} \cap (C\backslash K) \}_{\alpha \in \tilde{A}}$ of $C \backslash K$. Because $P(C \backslash K)$ holds, there exists a partition of unity $\{\tilde{\psi}_{\alpha}\}_{\alpha \in A}$ adapted to $\nu$ on $C\backslash K$ and subordinated to $\{U_{\alpha}\}_{\alpha \in A}$. Then, considering the neighbourhood $U_K$ of $K$ in the open cover $\{U_{\alpha}\}_{\alpha \in A}$, by the above, there exists a smooth function with $0 \leq \psi_k \leq 1$ which is compactly supported in $U_K$ with $\psi_K = 1$ in a neighbourhood of $K$. Thus, we get a partition of unity adapted to $\nu$ on $C$ subordinated to $\{U_{\alpha}\}_{\alpha \in A}$ given by $\{\psi_K\}\cup \{\psi_\alpha\}_{\alpha \in \tilde{A}}$ where
\begin{equation*}
\psi_\alpha = (1-\psi_K)\tilde{\psi}_{\alpha}, ~ \alpha \in \tilde{A}
\end{equation*}
and multiplication at points of $K$ is interpreted as $0$.
\end{proof}

\begin{lemma}\label{addonbdry}
Let $C$ be a connected component of $\mathcal{C}$ let $S$ be a connected component of $\partial C$ (and thus a closed leaf of $\nu$). Then, we have the implication $P(C \backslash S) \Rightarrow P(C)$.
\end{lemma}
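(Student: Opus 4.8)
The plan is to follow the proof of \cref{lem:addonK}, noting that the boundary case is genuinely easier: deleting a connected component of $\partial C$ neither disconnects $C$ nor alters the leaf structure elsewhere, so nothing like \cref{connectedcomplement} is needed. The key input is a collar of $S$ on which $\nu$ is straightened. Since $\mathcal{C}$ is open in $M$ by \cref{regionisopen}, $C$ is an open submanifold of $M$ with boundary and $\partial C^*\nu = 0$, so \cref{flowoutexistence} applied to $C$ --- with the transverse field $N$ chosen inward-pointing along $S$ --- yields $\epsilon>0$ and a flowout $\Sigma : S \times [0,\epsilon) \to C$ which is a diffeomorphism onto an open neighbourhood $W$ of $S$ in $C$, and by \cref{flowoutproperties} one has $\Sigma^*\nu = dt$, with $t : S \times [0,\epsilon)\to\R$ the projection. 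Shrinking $\epsilon$, I may assume $W \subset U_S$, the member of the given cover containing $S$.

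Next I would build an adapted cutoff. Choose $h \in C^\infty([0,\epsilon))$ with $0 \le h \le 1$, $h \equiv 1$ near $0$, and $\text{supp}(h) \subset [0,2\epsilon/3]$, and set $\psi_S = (h \circ t)\circ \Sigma^{-1}$ on $W$ and $\psi_S \equiv 0$ on $C \setminus W$; these agree near $\partial W$, so $\psi_S$ is smooth, $\equiv 1$ near $S$, with $\text{supp}(\psi_S) \subset \Sigma(S \times [0,2\epsilon/3]) \subset U_S$ (closed in $C$). Since $\Sigma^*(d\psi_S \wedge \nu) = h'(t)\,dt\wedge dt = 0$ and $d\psi_S = 0$ off $W$, we get $d\psi_S \wedge \nu = 0$ on $C$.

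I would then invoke $P(C\setminus S)$. As $C \subset \mathcal{C}$, every leaf meeting $C$ is a regular or degenerate closed leaf of $\nu$; in particular no leaf other than $S$ accumulates onto $S$ (by the collar above), and the set of closed and degenerate closed leaves of $(C\setminus S)^*\nu$ is exactly $A \setminus \{S\}$, where $A$ is that set for $C$. Given the cover $\{U_\alpha\}_{\alpha \in A}$ of $C$ with $U_\alpha \supset \alpha$, the family $\{U_\alpha \cap (C \setminus S)\}_{\alpha \in A\setminus\{S\}}$ covers $C \setminus S$ (each point lies on some leaf $\alpha \ne S$) and satisfies the hypotheses of $P(C\setminus S)$; let $\{\tilde\psi_\alpha\}_{\alpha \in A\setminus\{S\}}$ be a resulting adapted partition of unity on $C \setminus S$, subordinate to it. Finally I would glue: set $\psi_\alpha := (1-\psi_S)\tilde\psi_\alpha$ for $\alpha \ne S$, extended by $0$ across $S$ (smooth because $1-\psi_S$ vanishes near $S$), and keep $\psi_S$. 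Then $\{\psi_S\}\cup\{\psi_\alpha\}_{\alpha\ne S}$ consists of non-negative functions, locally finite (only $\psi_S$ is nonzero near $S$), with $\text{supp}(\psi_\alpha) \subset U_\alpha$, satisfying $\sum_\alpha \psi_\alpha = \psi_S + (1-\psi_S)\sum_{\alpha\ne S}\tilde\psi_\alpha = 1$ and $d\psi_\alpha \wedge \nu = -\tilde\psi_\alpha\,(d\psi_S\wedge\nu) + (1-\psi_S)\,(d\tilde\psi_\alpha\wedge\nu) = 0$ (using also $d\psi_S\wedge\nu=0$); this is exactly $P(C)$. The only delicate point is the straightening $\Sigma^*\nu=dt$ near $S$ that makes $\psi_S$ adapted, and this is supplied by \cref{flowoutexistence} and \cref{flowoutproperties}; beyond bookkeeping there is no real obstacle.
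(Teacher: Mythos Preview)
Your proof is correct and follows exactly the approach the paper sketches: a flowout from the boundary component $S$ supplies a collar on which $\nu$ is straightened, from which you build the adapted bump $\psi_S$, and then the algebraic gluing $\psi_\alpha = (1-\psi_S)\tilde\psi_\alpha$ is verbatim the construction at the end of the proof of \cref{lem:addonK}. The paper's own proof is a one-line pointer to precisely this argument, so your write-up simply fills in the details it omits.
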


\begin{proof}
This follows directly from considering a flowout from the boundary using a similar algebraic construction to that of the end of the proof of \cref{lem:addonK}.
\end{proof}

\subsubsection{Classification of the closed region and the toroidal region}

Our classification for $\mathcal{C}$ is the following.

\begin{lemma}\label{classificationofC}
Let $C$ be a connected component of $\mathcal{C}$. Then the following holds.
\begin{enumerate}
    \item If $C$ is without boundary and contains no degenerate closed leaves, then $C$ is either compact or there exists a regular closed leaf $S$ such that $C$ is diffeomorphic to $S\times I$ via $\Sigma$, a flowout of $S$ with interval $I$.
    \item Otherwise, there exists some regular closed leaf $S$ such that $C \backslash (\partial C \cup \mathcal{C}^1)$ is diffeomorphic to $S\times I$ via $\Sigma$, a flowout of $S$ with interval $I$. Furthermore, the combined sum of closed leaves in $\partial C$ and degenerate closed leaves contained in $C$ is at most $2$. If this sum is realised, then $C$ is compact.
\end{enumerate}
\end{lemma}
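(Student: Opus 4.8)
The plan is to reduce statement~1 and the bulk of statement~2 to \cref{boundarylessfoundation} (the compactness dichotomy for boundaryless components of the closed region), applied to a suitable boundaryless open submanifold, and then to prove the counting claim by showing that the boundary components of $C$ and the degenerate closed leaves inside $C$ each ``sit at one of the two ends'' of the resulting product structure. For statement~1: since $C$ has no boundary and contains no degenerate closed leaf, $\mathcal{C}^1\cap C=\emptyset$, so every point of $C$ lies on a regular closed leaf; thus $C$ is a connected boundaryless manifold on which the foliation of $\nu$ has $\mathcal{C}^2=C$, and \cref{boundarylessfoundation} applies verbatim, yielding precisely the stated alternative.

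For statement~2 I would work with $C^{\circ}:=C\setminus(\partial C\cup\mathcal{C}^1)=\interior C\setminus\mathcal{C}^1$, which by \cref{lem:addonK} is a connected boundaryless submanifold all of whose points lie on regular closed leaves. The key preliminary is that $C^{\circ}$ is \emph{non-compact}: in the ``otherwise'' case the removed set $\partial C\cup(\mathcal{C}^1\cap C)$ is nonempty, closed in $C$, and nowhere dense (a boundary hypersurface together with submanifolds of codimension $\ge2$), so $C^{\circ}$ is open and dense in the connected manifold $C$; a compact open subset of $C$ would be clopen, hence all of $C$, contradicting that $\partial C\cup(\mathcal{C}^1\cap C)$ is nonempty. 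Hence \cref{boundarylessfoundation} supplies a regular closed leaf $S$ and a flowout $\Sigma\colon S\times I\to C^{\circ}$ which is a diffeomorphism, with $I=(\alpha,\beta)$ an open interval containing $0$; by \cref{flowoutproperties}, $\Sigma^{*}\nu=dt$, so $f:=t\circ\Sigma^{-1}$ has $\nu|_{C^{\circ}}=df$ and is proper onto $I$ with compact fibres $S\times\{t\}$.

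The heart of the proof is to show that each component $S_{0}$ of $\partial C$ and each degenerate closed leaf $K\subset C$ is attached at one of the two ends $\alpha,\beta$, and that distinct ones are attached at distinct ends. I would treat both types of ``cap'' $X$ uniformly: near $X$ there is a one-sided flowout (from $S_{0}$, via \cref{flowoutexistence}) or a tubular neighbourhood (around $K$, as in \cref{lem:addonK}) on which $\nu$ is exact with a primitive vanishing on $X$; comparing this primitive with $f$ on the connected complement of $X$ in that neighbourhood shows $f$ extends continuously to $X$ with a constant value $c_{X}$, and that the $f$-values taken near $X$ fill a subinterval of $I$ with $c_{X}$ as an endpoint. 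If $c_{X}$ were interior to $I$, then (after shrinking) the deleted cap-neighbourhood would be contained in $S\times(c_{X},c_{X}+\epsilon)$ with compact closure $S\times[c_{X},c_{X}+\epsilon]\subset C^{\circ}$ in $C$, yet that closure contains $X$, which is disjoint from $C^{\circ}$ — a contradiction; hence $c_{X}\in\{\alpha,\beta\}$. Distinct caps must land on distinct endpoints, since the deleted cap-neighbourhoods are end-collars $S\times(\alpha,\alpha+\epsilon_{X})$ (resp. near $\beta$), so two caps sharing an endpoint would have nested collars, forcing the smaller cap's closure to lie in an arbitrarily small closed neighbourhood of the larger cap — disjoint from the smaller cap, absurd. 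As an open interval has exactly two ends, at most two caps occur, which is the stated bound; and when exactly two occur they collar both ends (so $\alpha,\beta$ are finite), whence $C$ is the union of the compact slab $S\times[\alpha+\delta,\beta-\delta]$ with the two compact ``filled'' end pieces — the image of a closed sub-flowout $\Sigma_{S_{0}}(S_{0}\times[0,\epsilon/2])$ for a boundary cap, or the compact region $R$ furnished by \cref{lem:addonK} for a degenerate leaf — hence $C$ is compact.

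I expect the main obstacle to be exactly the ``attached at an end'' claim together with its injectivity, i.e.\ rigorously controlling the product structure $\Sigma$ near the caps. The conceptual picture — a cap fills one end of $S\times I$, and an open interval has only two ends — is clear, but one must verify that the deleted cap-neighbourhoods genuinely are of the form $S\times(\text{subinterval adjacent to an endpoint of }I)$: this rests on the fact that the closed leaves swept out by the cap's own flowout are compact, hence maximal integral manifolds, hence coincide with entire $f$-level sets $S\times\{t\}$, and it requires handling boundary components and degenerate closed leaves on an equal footing despite their different local normal forms.
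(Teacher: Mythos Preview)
Your proposal is correct and follows the same overall strategy as the paper: both parts reduce to \cref{boundarylessfoundation} applied to the connected boundaryless piece $C^{\circ}=\interior C\setminus\mathcal{C}^1$ (whose connectedness comes from \cref{lem:addonK} and whose non-compactness is forced by the nonemptiness of the removed caps), yielding the product structure $S\times I$.

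The one point of divergence is the counting argument. You argue that each cap $X$ determines a limiting value $c_X\in\{\alpha,\beta\}$ of the flowout coordinate $f$, so the number of caps is bounded by the number of ends of the open interval $I$, and distinct caps occupy distinct ends. The paper instead argues more directly: given any two caps $L_1,L_2$, the short flowouts/collars emanating from them (for a degenerate leaf, the boundary $\partial R$ of the compact region from \cref{lem:addonK} plays the role of the connecting leaf) meet $C^{\circ}$, and one checks that $L_1\cup L_2\cup C^{\circ}$ is open, connected, and compact in $C$, hence equals $C$; this simultaneously gives the bound ``at most two'' and the compactness of $C$ when the bound is attained. Your route is a little more explicit about why two caps cannot collide at the same end, at the price of having to justify carefully that the deleted cap-neighbourhoods are genuinely end-collars of the form $S\times(\alpha,\alpha+\epsilon)$ or $S\times(\beta-\epsilon,\beta)$ --- which, as you correctly flag, rests on the fact that closed leaves are maximal integral manifolds and hence coincide with whole $f$-fibres. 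The paper's clopen argument sidesteps this bookkeeping but is terser about the case mix (boundary leaf vs.\ degenerate leaf).
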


\begin{proof}[Proof of \cref{classificationofC,classification}]
The first case follows immediately from Lemma \ref{boundarylessfoundation}. For the second case, first observe that $\partial C \cup (\mathcal{C}^1 \cap C)$ is a closed subset of $C$. Hence, $C \backslash (\partial C \cup \mathcal{C}^1)$ is not compact because otherwise $\partial C \cup \mathcal{C}^1$ would be open, in turn implying that $\interior C \backslash \mathcal{C}^1$ has an proper subset that is both open and closed. This contradicts (at least) the connectedness of $\interior C \backslash \mathcal{C}^1$ from \cref{lem:addonK}. Hence, $\interior C \backslash (\partial C \cup \mathcal{C}^1)$ is non-compact and connected. By \cref{boundarylessfoundation}, this implies that $\interior C \backslash (\partial C \cup \mathcal{C}^1)$ is diffeomorphic to $S\times I$ via $\Sigma$, a flowout of some closed leaf $S \subset \interior C \backslash (\partial C \cup \mathcal{C}^1)$ with some open interval $I$ containing $0$.

We now claim the following: if there are at least 2 closed leaves in $\partial C$ and degenerate closed leaves in $C$, then there are exactly $2$.

\begin{proof}[Proof of the claim]
We will treat the case of there being at least two closed leaves in $\partial C$. The cases involving axes are similar: the role of a leaf in our case is played by $\partial R$ by suitably small choice of $R$ as in Proposition \ref{lem:addonK} in the other cases. With this, let $L_1,L_2$ be two distinct such leaves. By the first paragraph and \cref{flowoutexistence,flowoutproperties}, there exist flowouts connecting each of these closed leaves to a leaf in $\interior C \backslash (\partial C \cup \mathcal{C}^1)$ which generates all of $\interior C \backslash (\partial C \cup \mathcal{C}^1)$. From these connecting flowouts, it is easy to see that $L_1 \cup L_2 \cup \interior C \backslash (\partial C \cup \mathcal{C}^1)$ is compact, open, and connected in $C$. Hence,
\begin{equation*}
C = L_1 \cup L_2 \cup \interior C \backslash (\partial C \cup \mathcal{C}^1).
\end{equation*}
So $\partial C$ consists of exactly the two closed leaves $L_1$ and $L_2$ and there are no degenerate closed leaves.
\end{proof}

Proposition \ref{classification} now follows from the above proof after direct adaptations of \cref{flowoutexistence,flowoutproperties} to flowouts of leaves from $\partial \mathcal{T}$ and the diffeomorphism flowouts in the connected components of $\interior \mathcal{T}^2$ together with the following observations. Recall, in this situation $M$ is orientable and of dimension $3$ and of course, the regular tori (or closed leaves) in $\mathcal{T}$ are each 2-tori. Lastly, concerning axes in $\mathcal{T}$, the $R$ which exist in \cref{lem:addonK} must be solid tori. Indeed, the vector bundles of fixed dimension over the circle have only two isomorphism classes with only the trivial class being orientable. Moreover, from the proof of \cref{lem:addonK}, $\tilde{R}$ is a disk subbundle of such a vector bundle making $\interior \tilde{R}$ diffeomorphic to a orientable vector bundle over the circle. Thus, because $R$ is embedded in $\interior R$ which must be an open solid torus, $R$ may be embedded in $\R^3$ and has a torus boundary, making $R$ a solid torus. This establishes the claim.
\end{proof}


\section{An obstruction to the global existence of adapted 1-forms}\label{sec:obstructiontoglobaleta}

Reeb cylinders, and more generally Kuperberg plugs, were identified in \cite{Peralta-Salas2021} as obstructions to the existence of MHS fields in arbitrary geometries. In this section, we will use a similar argument to show that the existence of Reeb cylinders are an obstruction to the global existence of an adapted 1-form. 

\begin{definition}
A \emph{Reeb cylinder} is a pair $(C,X)$ where $C$ is an oriented cylinder and $X$ is a non-vanishing vector field on $C$ tangent to $\partial C$ such that, considering the connected components $C_0,C_1$ of $\partial C$ whereby, as a 1-chain, $\partial C = C_0 - C_1$, the following hold:
\begin{enumerate}
    \item $X$ is positively (respectively negatively) oriented along the 1-chain $C_0$ and $C_0$ is the alpha-limit of each orbit except $C_1$.
    \item $X$ is negatively (positively) oriented along the 1-chain $C_1$ and is the omega-limit for every orbit except $C_0$.
\end{enumerate}
\end{definition}

The following elementary principle shows why Reeb cylinders are obstructions.

\begin{proposition}\label{prop:Reebcylinder}
Let $M$ be a 3-manifold with (possibly empty) boundary and $(B,\nu,\mu)$ a flux system on $M$. Suppose that $(B,\nu,\mu)$ possesses a non-trivial Reeb cylinder: that there exists a cylinder $C$ immersed in $M$ which $B$ is tangent to and such that $C$ together with the induced vector field $X$ on $C$ forms a Reeb cylinder, and the critical points of $\nu|_C$ are nowhere dense on $C$. Then, there cannot exist a 1-form adapted to $(B,\nu,\mu)$ on $M$.
\end{proposition}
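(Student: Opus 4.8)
The plan is a proof by contradiction: assume that a $1$-form $\eta$ adapted to $(B,\nu,\mu)$ exists on all of $M$, and derive a conflict between Stokes' theorem on the cylinder $C$ and the Reeb dynamics of $X$. Write $i\colon C\to M$ for the immersion and set $\kappa=i^{*}\eta\in\Omega^{1}(C)$. Since $B$ is tangent to $C$, the induced field $X$ is $i$-related to $B$, so $\kappa(X)=\eta(B)\circ i>0$ everywhere on $C$. Because $X$ is non-vanishing and tangent to $\partial C$, each of $C_{0},C_{1}$ is a single closed orbit of $X$. Parametrising $C_{0}$ by the flow of $X$, which (in the first case of the definition) runs in the positive direction of the $1$-chain $C_{0}$ and in the negative direction of $C_{1}$, gives $\int_{C_{0}}\kappa>0$ and $\int_{C_{1}}\kappa<0$, hence by Stokes' theorem and $\partial C=C_{0}-C_{1}$,
\[
\int_{C} d\kappa \;=\; \int_{\partial C}\kappa \;=\; \int_{C_{0}}\kappa-\int_{C_{1}}\kappa \;>\;0 .
\]

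Next I would extract structure from the adapted condition. Contracting $d\eta\wedge\nu=0$ with $B$ and using $\nu(B)=0$ yields $(\iota_{B}d\eta)\wedge\nu=0$, so $\iota_{B}d\eta$ is pointwise proportional to $\nu$ wherever $\nu\neq0$; pulling back along $i$, on the open dense set $C^{\circ}=\{x\in C:(i^{*}\nu)|_{x}\neq0\}$ (the complement of the nowhere-dense critical set of $\nu|_{C}$) the $1$-form $\iota_{X}d\kappa$ is proportional to $i^{*}\nu$. On the other hand $i^{*}\nu$ is closed, and its period over $C_{0}$ vanishes because the tangent to $C_{0}$ is a positive multiple of $B$ and $\nu(B)=0$; since $[C_{0}]$ generates $H_{1}(C)$, $i^{*}\nu$ is exact, say $i^{*}\nu=dF$ with $F\in C^{\infty}(C)$. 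Then $X(F)=i^{*}\nu(X)=0$, so $F$ is a first integral of $X$, constant on each $X$-orbit and, by continuity, on each orbit closure. On $C^{\circ}$ the kernel of $dF$ is one-dimensional and spanned by $X$, so "$\iota_{X}d\kappa$ proportional to $dF$" forces $d\kappa=dF\wedge\beta$ there for a (locally defined) $1$-form $\beta$.

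The heart of the argument is then a dynamical step: by the Reeb conditions, every $X$-orbit other than $C_{0},C_{1}$ has $C_{0}$ in its $\alpha$-limit set, so the constant value of $F$ on such an orbit equals $F|_{C_{0}}$; since these orbits are dense in $C$, $F$ is globally constant, whence $i^{*}\nu=dF=0$ on all of $C$. This already contradicts the hypothesis that the critical points of $\nu|_{C}$ are nowhere dense; equivalently, $C^{\circ}=\varnothing$ and $d\kappa\equiv0$, so $\int_{C}d\kappa=0$, contradicting the displayed inequality. I expect this final dynamical step to be the main obstacle, together with two points of care: the $H_{1}$ computation must be checked for an \emph{immersed} (a priori non-embedded) cylinder, and the passage from $d\kappa=dF\wedge\beta$ on $C^{\circ}$ to a statement on all of $C$ must handle the closure of the critical set (here resolved by the dynamical step, which shows the critical set is everything). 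It is worth noting that the dynamical step makes no use of $\eta$ at all, so in fact the role of $\eta$ in \cref{prop:Reebcylinder} is only to package the obstruction as a Stokes-theorem contradiction; one may alternatively state and prove the conclusion by simply observing that the two standing hypotheses on $(B,\nu,\mu)$ and $C$ are incompatible.
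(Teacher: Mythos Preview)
Your proof hinges on reading ``critical points of $\nu|_C$'' as the zero set of the pullback $i^{*}\nu$, and this misreading drives the argument off course. In the paper the phrase means the set $\{x\in C:\nu|_{x}=0\text{ in }T^{*}_{x}M\}$, i.e.\ the ambient critical set of $\nu$ intersected with $C$; this is what the proof uses when it says ``at points $x\in C$ where $\nu|_x\neq 0$''. Under your reading the hypothesis would indeed be vacuous (and would fail in \cref{theexample}, where $H$ is constant on the cylinders so $i^{*}dH=0$ identically, yet $dH$ is nonzero on their interiors). Your final remark that the standing hypotheses are mutually incompatible is therefore an artifact of this misinterpretation.

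Once the hypothesis is read correctly, your set $C^{\circ}=\{i^{*}\nu\neq0\}$ is empty (you prove this yourself via the Reeb dynamics), so the detour through ``$\iota_X d\kappa$ proportional to $dF$'' and ``$d\kappa=dF\wedge\beta$ on $C^{\circ}$'' carries no information, and you never actually establish $d\kappa=0$ on $C$. The missing step is to work on the \emph{ambient} non-critical set $D=\{x\in C:\nu|_{i(x)}\neq0\}$: at such points $d\eta\wedge\nu=0$ forces $d\eta|_{x}=\nu|_{x}\wedge\alpha_{x}$ for some covector $\alpha_x$ (this is where the $3$-dimensionality of $M$ is used), whence $i^{*}d\eta|_{x}=i^{*}\nu|_{x}\wedge i^{*}\alpha_{x}=0$ because you have already shown $i^{*}\nu\equiv 0$. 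The density hypothesis on $D$ then gives $d\kappa=i^{*}d\eta=0$ on all of $C$, and your Stokes computation yields the contradiction. This is exactly the route the paper takes; all your other ingredients (the exactness of $i^{*}\nu$ via $H_1(C)$, the constancy of $F$ from the Reeb limit sets, and the sign analysis on $\partial C$) are correct and match the paper's argument.
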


\begin{proof}
Suppose that $(B,\nu,\mu)$ possesses a Reeb cylinder $(C,X)$. Then, observe that $C^*\nu$ is closed on $C$. Moreover, the periodic orbits of $B$ assumed on $\partial C$ generate the homology of $C$. Hence, from the fact that $\nu(B) = 0$, we get that $C^*\nu = df$ is exact on $C$. However, because of the orbit structure of $X$ and $df(X) = 0$, we have that $f$ is constant. Thus, $C^*\nu = 0$. 

Now, suppose that $\eta$ is a 1-form on $M$ such that $d\eta \wedge \nu = 0$. We will show that we cannot have $\eta(B)|_{\partial C} > 0$ and thus there are no adapted 1-forms. Indeed, at points $x \in C$ where $\nu|_x \neq 0$, because $C^*\nu|_x = 0$, we obtain that $C^*d\eta|_x = 0$. Since such points are dense in $C$ by assumption, we conclude that $C^*d\eta = 0$ so that $C^*\eta$ is a closed 1-form on $C$. Now, regarding $C$ as a 2-chain, Stokes' Theorem says
\[0 = \int_{C}d \eta = \int_{C_0} \eta - \int_{C_1} \eta.\]
However, if $\eta(B)|_{\partial C} > 0$, then by the orientation of $X$ along $\partial C$, accounting for the two cases of orientation of the periodic orbits, the quantity
\[\int_{C_0} \eta - \int_{C_1} \eta\]
is either strictly positive or strictly negative. Thus, $\eta(B)|_{\partial C} > 0$ cannot hold.
\end{proof}

We will now exhibit an example of a flux system on the solid torus which possesses two (non-trivial) Reeb cylinders as depicted in \cref{fig:thereebtorus}. Despite this implying that an adapted 1-form $\eta$ cannot globally exist, this example still has a large amount of additional structure such as a global symmetry. The example owes its additional structure to the method of its construction, which was by explicitly carrying out steps suggested in \cite[Lemma 5]{Kuperberg1996}.

\begin{figure}[h!]
    \centering
    \includegraphics[scale=1]{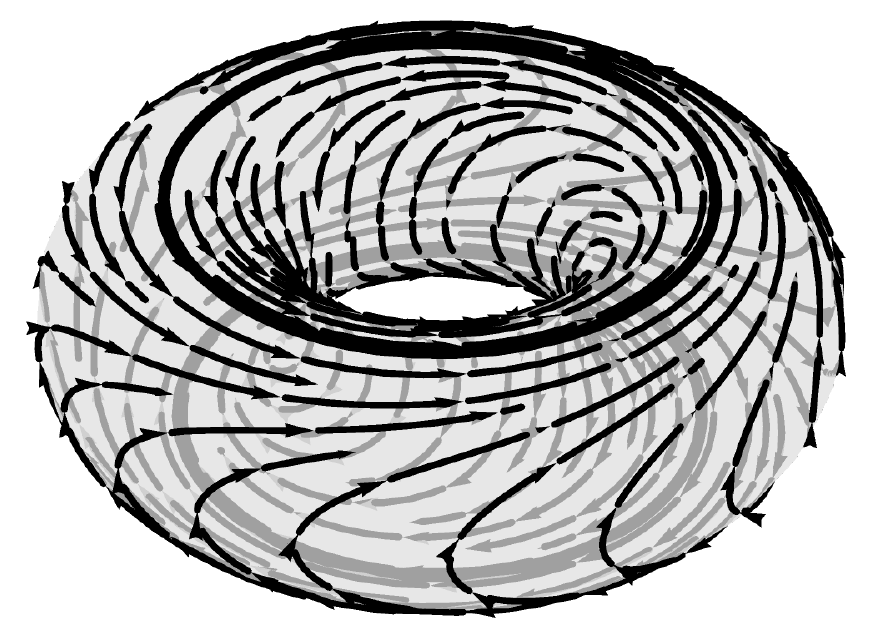}
    \caption{The invariant torus $x^2+y^2=1$ of the flux system $(B,\mu,dH)$ in \cref{theexample} embedded into $\mathbb{R}^3$. The invariant torus $x^2+y^2 = 1$ contains two Reeb cylinders of the flux system which intersect in the periodic orbits at the top and the bottom of the torus.}
    \label{fig:thereebtorus}
\end{figure}

\begin{example}\label{theexample}
Set $M = D \times \mathbb{R}/\mathbb{Z}$ where $D = \{(x,y) : x^2+y^2 \leq \sqrt{2} \}$ is the closed disk of radius $\sqrt{2}$ and use the standard coordinates $(x,y,\varphi)$ with $\varphi : M \to \mathbb{R}/\mathbb{Z}$ denoting projection onto the final factor of $M$. Consider the vector field $B$ given by
\[B = -H_y \partial_x + H_x \partial_y + f \partial_\varphi\]
where the Hamiltonian component of $B$ is generated by the function
\[H = (x^2+y^2-1)(1+ y^2(x^2+y^2-2))\]
and the toroidal component of $B$ is given by the function 
\[f = y + (x^2+y^2-1).\]

Observe that $B$ is divergence-free with respect to the volume form $\mu = dx \wedge dy \wedge d\varphi$. Furthermore, the vector field $u = \partial_{\varphi}$ is also divergence-free with respect to $\mu$ and satisfies $\iota_u\iota_B = dH$. It follows that $u$ is a symmetry of the flux system $(B,\mu,dH)$ and, since $H$ is constant and regular on $\partial M$, the flux system is tangential. 

We now discuss the topological structure of $H$ and $B$. Concerning $H$, it suffices to consider the function $h$ defined by the same formula as $H$ except on $D$ and we have the following.

\begin{proposition}
The critical points of $h$ are $(0,0)$ and $(0,\pm 1)$. Moreover, the critical point $(0,0)$ constitutes the level set $h^{-1}(-1)$ as the minimising set for $h$. The critical points $(0,\pm 1)$ belong to the level set $h^{-1}(0) = \{x^2+y^2 = 1\}$ and the remaining level sets are regular for $h$ and are circles. 

Thus, the level sets are nested in the sense that: for each level set $h^{-1}(z)$ with $z \neq -1$, $h^{-1}(z) = \partial A_z$ where $A_z = \{H \leq z\}$ is (diffeomorphically) a disk and for $z' < z$, we have the containment $A_{z'} \subset \interior A_z$ and in particular $h^{-1}(z') \subset \interior A_z$ so that $h^{-1}(z)$ ``contains $h^{-1}(z')$ in its interior".
\end{proposition}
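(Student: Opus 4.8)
The plan is to work with the single function $h=(s-1)\bigl(1+y^{2}(s-2)\bigr)$ on the closed disk $D=\{s\le 2\}$, writing $s=x^{2}+y^{2}\in[0,2]$ throughout; note $\partial D=\{s=2\}$, on which $h\equiv 1$. First I would locate the critical points by direct computation: $h_{x}=2x\bigl(1+y^{2}(2s-3)\bigr)$ and $h_{y}=2y\bigl(1+(s-1)(s-2)+y^{2}(2s-3)\bigr)$. If $x=0$ then $h_{y}=6y(y^{2}-1)^{2}$ (using $1+(y^{2}-1)(y^{2}-2)+y^{2}(2y^{2}-3)=3(y^{2}-1)^{2}$), which vanishes exactly at $(0,0)$ and $(0,\pm1)$. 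If $x\ne 0$ and $y\ne 0$, subtracting the two bracketed factors forces $(s-1)(s-2)=0$, and neither $s=1$ nor $s=2$ is compatible with $x\ne 0$; and if $y=0,\ x\ne 0$ then $h_{x}=2x\ne0$. Hence the critical set is exactly $\{(0,0),(0,\pm1)\}$. A Hessian computation at $(0,0)$ gives the diagonal matrix with entries $2$ and $6$, so $(0,0)$ is a nondegenerate minimum; at $(0,\pm1)$ the Hessian vanishes identically.

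The global shape of $h$ I would read off from the elementary identity $1+y^{2}(s-2)=(1-s)^{2}+(2-s)x^{2}$ (verified by expanding and using $x^{2}+y^{2}=s$). It shows the second factor of $h$ is $\ge 0$ on $D$ and vanishes precisely at $(0,\pm1)$, so the sign of $h$ equals the sign of $s-1$; this yields at once $h^{-1}(0)=\{s=1\}$ and $\{h\le 0\}=\{s\le 1\}$, a closed disk. On $\{s\le 1\}$ the second factor is $\le (1-s)^{2}+(2-s)s=1$, whence $h\ge s-1\ge -1$ with equality only at the origin, while on $\{s>1\}$ one has $h>0$; therefore $(0,0)$ is the unique global minimum and $h^{-1}(-1)=\{(0,0)\}$. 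Symmetrically, on $\{s\ge 1\}$ one gets $h\le s-1\le 1$ with equality only on $\partial D$, so $h^{-1}(1)=\partial D$, and the only critical values in $\interior D$ are $-1$ and $0$.

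It remains to analyse the level and sublevel sets $A_{z}=\{h\le z\}$ for $z$ in the range with $z\ne -1$. The cases $z\ge 1$ (where $A_{z}=D$) and $z=0$ (where $A_{0}=\{s\le 1\}$ and $h^{-1}(0)=\{s=1\}$, a circle through the degenerate critical points) are immediate. For $z\in(-1,0)\cup(0,1)$, $z$ is a regular value and $A_{z}$ is disjoint from $\partial D$, so $A_{z}$ is a compact surface with boundary $h^{-1}(z)$. The vector field $X=-\nabla h/|\nabla h|^{2}$ is defined away from the three critical points, in particular on all of $\{0<h\}\cap D$ (no critical point lies there, including on $\partial D$), and $h$ decreases at unit rate along its flow; flowing $A_{z}$ downward along $X$ and stopping on $\{h\le 0\}$ yields a deformation retraction of $A_{z}$ onto $\{s\le 1\}$ when $z>0$, and when $z<0$ a retraction onto a Morse coordinate ball about $(0,0)$. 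In either case $A_{z}$ is contractible, and a compact connected surface with nonempty boundary that is contractible is a disk; thus $A_{z}$ is a disk and $h^{-1}(z)=\partial A_{z}$ is a single circle. Finally, for $z'<z$ one has $A_{z'}\subset\{h<z\}=\interior A_{z}$, which is the claimed nesting.

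The essential obstacle is the degeneracy of the critical points $(0,\pm1)$: one cannot simply run Morse theory through the critical level $h=0$. This is bypassed by the sign analysis, which identifies $\{h\le 0\}=\{s\le 1\}$ explicitly as a disk, after which the gradient-flow retraction needs only the absence of critical points in the open region $\{0<h<z\}$. A minor point to check along the way is that $\nabla h$ has no zeros on $\partial D$ — there it equals $2(1+y^{2})(x,y)$ — so the downward flow cannot escape through $\partial D$, and that the standard reparametrisation turns this flow into a genuine deformation retraction.
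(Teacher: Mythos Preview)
Your argument is correct in outline but takes a genuinely different route from the paper. The paper locates the critical points via a discriminant argument on the quadratic $x^4+(4y^2-3)x^2+3(y^2-1)^2$ in $x^2$, identifies $h^{-1}(0)$ by a second discriminant argument on the alternative factorisation $h=(s-1)(y^4+(x^2-2)y^2+1)$, and then establishes connectedness of the regular level sets by a Jordan--curve argument: each circle component bounds a disk that must contain an interior critical point, hence contains $(0,0)$, hence meets the ray $\{(t,0):t\ge 0\}$; but $h(t,0)=t^2-1$ is injective there, forcing a single component. Your approach replaces both discriminant steps with the single identity $1+y^2(s-2)=(1-s)^2+(2-s)x^2$, which is cleaner and gives the sign of $h$, the zero set, and the extremal values in one stroke; and it replaces the Jordan--curve argument with a Morse/gradient-flow argument. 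Your route is arguably more conceptual; the paper's is more elementary in that it avoids any analysis of the flow.

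There is one genuine technical gap. For $z\in(0,1)$ you retract $A_z$ onto $\{s\le 1\}$ by flowing along $X=-\nabla h/|\nabla h|^2$ and ``stopping on $\{h\le 0\}$''. But the target level $\{h=0\}$ contains the degenerate critical points $(0,\pm1)$, where $X$ blows up, and trajectories do approach them (e.g.\ the one along $x=0$, $y>1$). You must then show that every such trajectory has a well-defined endpoint on $\{h=0\}$ and that the resulting map $A_z\to\{s\le1\}$ is continuous there; the reparametrisation remark does not cover this. The clean fix is to flow in the other direction: for $z\in(0,1)$ there are no critical points in $\{z\le h\le 1\}$ (nor on $\partial D$, as you already checked), so the normalised gradient flow gives a diffeomorphism $\{z\le h\le 1\}\cong\partial D\times[0,1-z]$; hence $h^{-1}(z)\cong\partial D$ is a single circle and $A_z$ is $D$ with an open boundary collar removed, i.e.\ a disk. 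This avoids the degenerate level entirely and keeps your overall strategy intact.
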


\begin{proof}
First, note that
\[h_x = 2 x (2 y^4 + (2 x^2-3) y^2 + 1) ,\qquad h_y = 2 y (x^4 + (4 y^2 - 3)x^2 + 3 (y^2-1)^2).\]
In particular, the factor
\[x^4 + (4 y^2 - 3)x^2 + 3 (y^2-1)^2 = 3(y^2-1)^2+4x^2(y^2-1)+x^4+x^2\]
is a quadratic in $y^2-1$ with discriminant $4x^2(x^2-3)$. For $(x,y) \in D$, this discriminant is non-negative if and only if $x = 0$. From here, one may easily deduce that the only critical points of $h$ are $(0,0)$ and $(0,\pm 1)$ whereby $h(0,0) = -1$ and $h(0,\pm 1) = 0$. Clearly $h|_{\partial D} = 1$ and so $(0,0)$ is the unique global minimiser of $h$ so that $h^{-1}(-1) = \{(0,0)\}$. Moreover, $h$ may alternatively be written as
\[h = (x^2+y^2-1)(y^4 + (x^2-2)y^2 + 1)\]
and a similar quadratic argument as before shows that $h^{-1}(0) = \{x^2+y^2 = 1\}$. Now, the level set $h^{-1}(z)$ for $z \neq 0,1$ is regular. Let $L$ be a connected component of $h^{-1}(z)$ so that $L$ is (diffeomorphically) a circle and let $A$ be the unique disk with $\partial A = L$. Then, $\interior A$ must contain a critical point of $h$. If $\interior A$ contains one the $(0,\pm 1)$ critical points, then because $L \cap \{x^2+y^2=1\} = \emptyset$, we have that $A \cap \{x^2+y^2=1\} = \interior A \cap \{x^2+y^2=1\}$ and thus $\interior A \cap \{x^2+y^2=1\}$ is closed and open in $\{x^2+y^2=1\}$ and thus $\interior A$ contains all of $\{x^2+y^2=1\}$ and thus all of $\{x^2+y^2 < 1\}$ and in particular the critical point $(0,0)$. Thus, in general $\interior A$ contains $(0,0)$. Hence, by compactness, of $\{(t,0) : t\geq 0\} \cap A$, have a non-empty intersection $\partial A \cap \{x \geq 0, y = 0\} \neq \emptyset$. Hence, every connected component of every level set intersects the curve $\gamma(t) = (t,0)$ ($0 \leq t \leq \sqrt{2}$). On the other hand, $h(\gamma(t)) = t^2-1$ which is injective on $\{0 \leq t \leq \sqrt{2}\}$. Hence, the level sets of $h$ are connected.
\end{proof}

Now addressing the topology of $B$, first observe that $B$ has no zeros: for the critical set by the above of $H$ is $\{(0,\pm 1,\varphi),(0,0,\varphi): \varphi \in \mathbb{R}/\mathbb{Z}\}$ and there we have the equalities
\[f(0,0,\varphi) = -1, \qquad f(0,\pm 1, \varphi) = \pm 1, \qquad \varphi \in \mathbb{R}/\mathbb{Z}.\]
Lastly, we claim that the cylinders
\[ C_{\pm} = \{(x,y,\varphi) \in M\,|\, x^2+y^2 = 1,\, \pm x \geq 0 \}\]
are Reeb cylinders. Indeed, from the fact that $H$ is a first integral of $B$ and $C_{\pm} \subset H^{-1}(0)$ with $dH|_{\interior C_{\pm}}$ being non-vanishing, we have that $B$ is tangent to $C_{\pm}$. In fact, concerning $\partial C_{\pm}$, one may compute that
\[ B|_{(0,\pm 1,\varphi)} = \pm  \partial_{\varphi}, \qquad \varphi \in \mathbb{R}/\mathbb{Z} \]
so that $\partial C_{\pm}$ consists of two periodic orbits of the required opposite orientation for $C_{\pm}$ to be a Reeb cylinder. Lastly, on $\interior C_{\pm}$, using that $x^2 = 1-y^2$, we have
\begin{equation*}
dy(B) = H_x = 2x^3
\end{equation*}
and $\pm2x^3 > 0$ on $C_{\pm}$. Hence, we obtain the required alpha and omega limit property of the orbits in $\interior C$. In conclusion, $C$ is a Reeb cylinder.
\end{example}

To summarise, \cref{theexample} contains (non-trivial) Reeb cylinders and this obstructs the existence of an adapted 1-form. Despite this obstruction, \cref{theexample} has many nice properties, such as being non-vanishing, admitting a symmetry, and possessing nested invariant tori. The resolution between this example and our main result \cref{thm:etaexists} is that $H$ has critical points at the periodic orbits of the Reeb cylinder: if $H$ had no critical points, there would be a contradiction with \cref{etaonaxisnbhd}.


\section{Flux-coordinates and rectifiability}\label{sec:flux-coordinates}

In the study of magnetically confined plasma, it is often convenient to use special coordinates adapted to the magnetic field, so called \emph{magnetic} or \emph{flux-coordinates} \cite{dhaeseleer-1991,helanderTheoryPlasmaConfinement2014}. The main advantage of flux-coordinates is that field-lines are straight. From a computational point of view, magnetic differential equations in flux-coordinates have a simple form and can be solved by means of Fourier series in the corresponding poloidal and toroidal angles. Most notably, in the study of magneto-hydrostatics (MHS), particular choices of flux-coordinates are extensively used, such as Hamada coordinates \cite{hamadaHydromagneticEquilibriaTheir1962}, PEST coordinates~\cite{grimmComputationMagnetohydrodynamicSpectrum1976}, Boozer coordinates~\cite{boozer-1981}, etc. The methods presented in this paper provide the means of proving the existence of flux-coordinates without assuming that the magnetic field is MHS nor that the magnetic field is quasi-symmetric, nor that it has any symmetry by assumption.

In this section, quick and rigorous proofs of the existence of flux-coordinates are given for flux systems in general (\cref{cor:toruscoords,cor:axiscoords}). It is highlighted that flux-coordinates all come about because of conformal symmetries; different coordinates correspond to different pairs of commuting vector fields.
It is important to note that the full machinery of \cref{thm:etaexists} is not necessary. We only need the technical \cref{etaontubenbhd}, the elementary \cref{prop:etaonarbaxis}, and the algebraic \cref{thm:uexistsGivenEta}. The work in \cite{burby2021integrability} is a particular case of the results in this section. In \cref{sec:near a torus}, we will focus on flux-coordinates near regular tori. In section \cref{sec:axiscoords}, we consider flux-coordinates near axes relying heavily on the work done in \cite{burby2021integrability}. From this point on in this section, fix a 3-manifold $M$ with boundary.

To establish flux-coordinates, in relation to symmetries, we remark the following.

\begin{remark}\label{rmk:conformalsymmetryoffluxsystems}
If $(B,\nu,\mu)$ is a flux system on a manifold $M$ with boundary and $\eta$ is an adapted 1-form, then \cref{thm:uexistsGivenEta} implies that that $(\tB,\nu,\tmu)$ given by
\[\tB = B/f, \qquad \tmu = f\mu, \qquad f = \eta(B)\]
is also a flux system on $M$.
Moreover, the vector field $u$ satisfying
\[\iota_u \mu = (\nu \wedge \eta)/f\]
is such that
\[\iota_u \iota_{\tB} \tmu = \iota_u \iota_B \mu = \nu, \qquad [u,\tB] = 0.\]

An adapted 1-form is locally accessible for many flux systems $(B,\nu,\mu)$: if the flux system is tangential and $L$ is a regular torus, or if $L$ is a periodic orbit of $B$ contained in $\interior M$, by \cref{etaontubenbhd,prop:etaonarbaxis} there exists a neighborhood $\U$ of $L$ and a closed 1-form $\eta \in \Omega^1(\U)$ such that $\eta(B) > 0$. Hence, $\eta$ is adapted to $(B,\nu,\mu)$ in $\U$ and thus, one may rescale the tangential flux system on $\U$ and obtain a symmetry as concluded above. 
\end{remark}

In plasma physics, there are choices of an adapted 1-form which may canonically be available. For instance, if $M$ is an oriented Riemannian 3-manifold with boundary, then a non-vanishing divergence-free vector field $B$ satisfying
\[B \cdot \nabla \psi = 0, \qquad J \cdot \nabla \psi = 0,\]
where $\psi$ is a smooth function and $J = \nabla \times B$, has $\eta = B^{\flat}$ as an adapted 1-form. Then, $\tB=B/B^2$ and $u=\nabla\psi\times B/B^2$ are commuting vector fields. The resulting coordinates (when $B$ and $J$ are tangent to $\partial M$) on regular tori generated by $\psi$ are known as Boozer-coordinates~\cite{boozer-1981}. This was observed in \cite{perrella2022rectifiability} and in \cite{burby2021integrability} for the case that $B$ is MHS; namely $J \times B = \nabla \psi$. 

Another relevant and common adapted 1-form in the context of toroidal confinement is $\eta=d\varphi= (-ydx+xdy)/(x^2+y^2)$, where, in an abuse of notation, $\varphi(x,y,z)=\arctan(y/x)$ is the geometric toroidal angle of a solid torus $M$ embedded in $\R^3$ away from the $z$-axis. The 1-form $d\varphi$ is closed (but deceivingly not exact) and the assumption that $B^\varphi=d\varphi(B)>0$ is valid on physical grounds (the toroidal component is usually dominant in fusion devices such as tokamaks and stellarators). The related conformal symmetry of $B$ is
\[u = \frac{\nabla\psi\times \nabla \varphi}{B^\varphi}\]
and is purely poloidal: the integral curves lie on constant $\varphi$-lines in $S$ (and are hence closed). Consequently, the geometric toroidal angle $\varphi$ can serve as a coordinate, and a poloidal angle (another independent coordinate), can be found to make the field-lines of $B$ straight (or, following \cite{perrella2022rectifiability}, semi-rectified on $S$). The resulting coordinates are known as PEST coordinates~\cite{grimmComputationMagnetohydrodynamicSpectrum1976}. They are adopted by many MHS codes including VMEC~\cite{hirshmanSteepestDescentMoment1983} extensively used for stellarator optimisation.

We will now discuss how symmetries give rise to flux-coordinates and write down the consequences for general flux systems.

\subsection{Near a torus}\label{sec:near a torus}

In this section, we restrict to tangential flux systems. We remark that if one begins with a flux system on $M$ which is not tangential, then restriction to $\interior M = M \backslash \partial M$ produces a tangential flux system on $\interior M$.

The following theorem can be viewed as the general principle behind guaranteeing the existence of flux-coordinates near a torus. At the end of this section, we write down the consequences of this theorem for tangential flux systems.

\begin{theorem}\label{thm:SymmetryIsRectifiability}
    Let $(B,\nu,\mu)$ be a tangential flux system on $M$ and suppose that $S$ is a regular torus. Then, the following are equivalent. 
    \begin{enumerate}
        \item There exists a neighbourhood $\U$ of $S$ and a vector field $u$ on $\U$ satisfying $\iota_u \iota_B \mu = \nu$ and $[u,B]=0$.
        \item There exists a neighbourhood $\U$ of $S$, a vector field $u$ on $\U$ satisfying $\iota_u \iota_B \mu = \nu$, and a diffeomorphism $\Phi = (\theta_1,\theta_2,z) : \U \to \T^2\times I$ where $I$ is some interval such that
	\begin{equation}\label{rectified}
        \begin{split}
            \nu &= d\psi(z),\\
            B &= a(z) \partial_{\theta_1} + b(z) \partial_{\theta_2},\\
            u &= c(z) \partial_{\theta_1} + d(z) \partial_{\theta_2}. 
        \end{split}
        \end{equation}
        in $\U$, where $\psi,a,b,c,d : I \to \R$ are smooth.
    \end{enumerate}
\end{theorem}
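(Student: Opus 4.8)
The equivalence $(2)\Rightarrow(1)$ is immediate: in the coordinates $(\theta_1,\theta_2,z)$ the coefficients of both $u$ and $B$ depend only on $z$, and each $\partial_{\theta_i}$ annihilates every function of $z$, so every term in the expansion of $[u,B]=[c(z)\partial_{\theta_1}+d(z)\partial_{\theta_2},\,a(z)\partial_{\theta_1}+b(z)\partial_{\theta_2}]$ vanishes, giving $[u,B]=0$, while $\iota_u\iota_B\mu=\nu$ is already part of the hypothesis of (2). Thus the content of the theorem is the implication $(1)\Rightarrow(2)$.

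For $(1)\Rightarrow(2)$, the plan is to combine the flowout normal form of \cref{sec:eta} with the classical ``commuting frame on a torus'' argument. First I would shrink $\U$ and invoke \cref{flowoutexistence,flowoutproperties}: since $S$ is a regular torus it is a closed leaf of $\nu$, so a flowout of $S$ identifies a neighbourhood of $S$ with $S\times I$ (with $I$ an open interval, or a half-open one if $S\subset\partial M$) in such a way that $\nu=dz$, where $z:\U\to I$ is the projection; reparametrising $I$ one may equally write $\nu=d\psi(z)$. Because $\nu(B)=0$ and $\iota_u\iota_B\mu=\nu$ forces $\nu(u)=\iota_u\iota_u\iota_B\mu=0$, both $B$ and $u$ are tangent to every slice $S_z:=S\times\{z\}$. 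Shrinking $\U$ once more so that $B$ is non-vanishing there, the identity $\iota_u\iota_B\mu=\nu\neq 0$ shows that $u$ and $B$ are pointwise linearly independent on $\U$ (if they were dependent at a point where $B\neq 0$ then $\iota_u\iota_B\mu$ would vanish there). Hence on each compact connected slice $S_z\cong\T^2$ the restrictions $B_z,u_z$ form a pair of everywhere-independent, commuting vector fields.

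Next I would run the torus argument slice by slice and make it uniform in $z$. On each compact $S_z$ the flows of $B$ and $u$ are complete and commute, so they define a locally free $\R^2$-action whose orbits are open; by connectedness the action is transitive, giving $S_z\cong\R^2/\Lambda_z$ for a rank-two lattice $\Lambda_z\subset\R^2$ under which $B_z,u_z$ become the constant fields $\partial_{s_1},\partial_{s_2}$. Fixing $p_0\in S$, define $\mathcal{E}:\R^2\times I\to\U$ by $\mathcal{E}(s_1,s_2,z)=\phi^B_{s_1}\bigl(\phi^u_{s_2}(p_0,z)\bigr)$; this is well defined since the flows preserve the compact slices on which they are complete, it is a surjective local diffeomorphism with $\mathcal{E}_*\partial_{s_1}=B$, $\mathcal{E}_*\partial_{s_2}=u$ and $\mathcal{E}_*\partial_z$ transverse to the slices, and $\mathcal{E}(s,z)=\mathcal{E}(s',z')$ if and only if $z=z'$ and $s-s'\in\Lambda_z$. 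Applying the implicit function theorem to the equation $\mathcal{E}(\,\cdot\,,z)=(p_0,z)$ near a primitive lattice point produces smooth local sections of $\Lambda_\bullet$, and since $I$ is an interval the associated bundle of lattices is trivial, so a global smooth frame $z\mapsto(v_1(z),v_2(z))$ of $\Lambda_z$ exists. Declaring $(\theta_1,\theta_2)$ to be the coordinates of $s$ in the basis $(v_1(z),v_2(z))$ reduced mod $\Z^2$, and keeping $z$, yields a diffeomorphism $\Phi=(\theta_1,\theta_2,z):\U\to\T^2\times I$; from $\partial_{\theta_i}=(v_i(z))^1B+(v_i(z))^2u$ and inversion of the smooth $z$-dependent matrix $\bigl((v_i(z))^j\bigr)$ one reads off the form \eqref{rectified} for $B$ and $u$, with $\nu=dz=d\psi(z)$.

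The one genuinely non-routine point is the smooth $z$-dependence of the period lattice $\Lambda_z$ and its global trivialization over $I$; everything else is the flowout normal form already established in \cref{sec:eta} together with the standard packaging of commuting, pointwise independent fields on a torus (the same area-preserving-on-$\T^2$ circle of ideas used in \cite{perrella2022rectifiability}). A minor additional care is needed in the boundary case $S\subset\partial M$, where one uses the one-sided flowout of \cref{flowoutexistence} and a half-open interval $I$, which does not affect any of the steps above.
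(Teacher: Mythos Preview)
Your proof is correct and follows essentially the same route as the paper's: use the flowout of the regular torus $S$ (\cref{flowoutexistence,flowoutproperties}) to write $\nu=d\psi$ locally with $S$ a regular level set, observe that $u$ and $B$ are commuting, pointwise independent vector fields tangent to each level torus, and then run the Arnol'd--Liouville argument to straighten both fields simultaneously. The paper simply cites \cite{arnold2013mathematical} and \cite[Prop.~26]{perrella2022rectifiability} for this last step, whereas you spell out the $\R^2$-action, the smooth $z$-dependence of the period lattice, and its trivialization over the interval; this expansion is accurate and matches the content of those references.
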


\begin{proof}
Because $S$ is a regular torus, by \cref{flowoutexistence} and \cref{flowoutexistence}, there exists a neighborhood $U \subset \U$ of $S$ on which $\nu = d\psi$ for some $\psi \in C^{\infty}(U)$ whereby $S$ is a regular level set of $\psi$. We have that $[u,B] = 0$ and $u,B$ are tangent to the level sets $\psi$ (and $\partial M$). We conclude that 2 therefore holds by standard Arnol'd-Liouville argument (the general argument is given in \cite{arnold2013mathematical} and the current setting is given in \cite[Prop.~26]{perrella2022rectifiability}). The converse is a direct computation.
\end{proof}

Following \cite{perrella2022rectifiability}, if Equation \eqref{rectified} in \cref{thm:SymmetryIsRectifiability} is satisfied, we say that $\Phi$ \emph{rectifies $(B,\nu,\mu)$ and $u$}. 

It is often important in plasma physics to instead work with the 2-form $\beta = \iota_B \mu$. The following is a well-known reformulation of the aforementioned rectifiability condition.

\begin{proposition}\label{prop:RectifiabilityisHamada}
Let $(B,\nu,\mu)$ be a tangential flux system on $M$, $S$ be a regular torus, and $\U$ be a neighborhood of $S$ for which $\nu|_{\U}$ is non-vanishing. Let $u$ be a vector field on $\U$ satisfying $\iota_u \iota_B \mu = \nu$, and $\Phi = (\theta_1,\theta_2,z) : \U \to \T^2\times I$ be a diffeomorphism where $I$ is some interval. Then, the following are equivalent.
\begin{enumerate}
    \item The diffeomorphism $\Phi$ rectifies $(B,\nu,\mu)$ and $u$.
    \item The diffeomorphism $\Phi$ is such that the following is satisfied.
    \begin{equation}\label{eq:HamdaForm}
    \begin{split}
        \nu &= d\psi(z) \\
        \beta := \iota_B\mu& = dF(z) \wedge d\theta_1 - dG(z)\wedge d\theta_2,\\
        \vartheta := \iota_u\mu & = dK(z) \wedge d\theta_1 - dL(z) \wedge d\theta_2,
    \end{split}
    \end{equation}
    in $\U$, where $\psi,F,G,K,L : I \to \R$ are smooth.
    \end{enumerate}
\end{proposition}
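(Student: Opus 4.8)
The plan is to prove this by direct translation between the ``vector field'' form \eqref{rectified} and the ``2-form'' form \eqref{eq:HamdaForm}, using the relation $\iota_X \mu$ to pass between a vector field $X$ and a 2-form, and exploiting that $\nu = d\psi(z)$ in both cases pins down the $z$-coordinate up to reparametrisation. Throughout I work in the chart $\Phi = (\theta_1,\theta_2,z) : \U \to \T^2 \times I$, writing $\mu = w\, d\theta_1 \wedge d\theta_2 \wedge dz$ for some nonvanishing function $w$; note that since $\nu = d\psi(z) = \psi'(z)\,dz$ holds under either hypothesis, no information about $w$ is forced by $\nu$ alone, so I must carry $w$ along.

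First I would prove $(1) \Rightarrow (2)$. Assume \eqref{rectified}. Compute $\beta = \iota_B \mu = \iota_{a(z)\partial_{\theta_1} + b(z)\partial_{\theta_2}}(w\, d\theta_1\wedge d\theta_2 \wedge dz) = w\big(a(z)\, d\theta_2 \wedge dz - b(z)\, d\theta_1 \wedge dz\big)$. Since $\L_B \mu = 0$ and $B$ is $z$-independent in these coordinates with $B(w) = a(z)\partial_{\theta_1} w + b(z)\partial_{\theta_2}w$, the divergence-free condition forces $a(z)\partial_{\theta_1}w + b(z)\partial_{\theta_2}w = 0$; combined with $\beta$ being closed ($d\beta = 0$ automatically from $d\iota_B\mu = \L_B\mu = 0$) one gets that $wa$ and $wb$ depend only on $z$. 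Hence setting $G'(z) = w a(z)$ and $F'(z) = -w b(z)$ (as functions of $z$ — this is where I use that $wa, wb$ are functions of $z$ alone, so they admit antiderivatives $F(z), G(z)$), we obtain $\beta = dF(z)\wedge d\theta_1 - dG(z)\wedge d\theta_2$. The identical computation applied to $u = c(z)\partial_{\theta_1} + d(z)\partial_{\theta_2}$ and $\vartheta = \iota_u \mu$, using that $u$ is also divergence-free with respect to $\mu$ (which follows from $[u,B]=0$ together with \cref{thm:uexistsGivenEta}/\cref{lem:divIsCommute}, or can be read directly off $\iota_u\mu = (\nu\wedge\eta)/\eta(B)$ being closed), yields $\vartheta = dK(z)\wedge d\theta_1 - dL(z)\wedge d\theta_2$. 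And $\nu = d\psi(z)$ is hypothesised. So $(2)$ holds.

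Next, $(2) \Rightarrow (1)$. Assume \eqref{eq:HamdaForm}. Recover $B$ from $\beta$: since $\iota_B \iota_B \mu = 0$ and $\beta = F'(z)\,dz\wedge d\theta_1 - G'(z)\,dz \wedge d\theta_2$, writing $B = B^{1}\partial_{\theta_1} + B^2 \partial_{\theta_2} + B^z \partial_z$ and using $\iota_B\mu = \beta$ gives three scalar equations; the coefficient of $d\theta_1\wedge d\theta_2$ in $\iota_B\mu$ is $w B^z$, which must vanish, so $B^z = 0$; the remaining coefficients give $w B^1 = G'(z)$ and $w B^2 = -F'(z)$ — wait, I should match signs carefully against the convention above, but the upshot is $w B^1$ and $w B^2$ are functions of $z$. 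But that does not yet say $B^1, B^2$ are functions of $z$, since $w$ may depend on $\theta_1,\theta_2$. This is the main obstacle: I need to upgrade ``$wB^i \in C^\infty(I)$'' to ``$B^i \in C^\infty(I)$''. The resolution is to use $\nu(B) = 0$ — already automatic — together with the fact that $B$ is tangent to the tori $\{z = \text{const}\}$ and nonvanishing there, plus $\L_B \mu = 0$: this last gives $B^1 \partial_{\theta_1} w + B^2 \partial_{\theta_2} w + \partial_{\theta_1}(wB^1) + \partial_{\theta_2}(wB^2) = 0$, i.e. $\Div_{\mu} B = 0$, and since $wB^i$ are $z$-only, $B^1\partial_{\theta_1}w + B^2\partial_{\theta_2}w = 0$, i.e. $B(w)=0$ on each torus. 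Since $B$ restricted to each 2-torus is a nonvanishing divergence-free (for the induced area form) field with $wB^1, wB^2$ constant along $\theta$'s, a short argument — e.g. the one underlying Kolmogorov's theorem or directly: a nonvanishing vector field on $\T^2$ all of whose ``covelocities'' $wB^i$ are constant must be, after noting $B(w)=0$, have $w$ constant along orbits, and density/linear-independence considerations force $w$ constant in $\theta$ — shows $w = w(z)$, hence $B^i = B^i(z)$. Actually the cleanest route: invoke \cref{thm:SymmetryIsRectifiability} directly. Since $\beta$ has the stated form, one checks $[u,B]=0$ by a direct bracket computation in coordinates (both fields have the form ``$z$-dependent coefficient times $\partial_{\theta_i}$'' once the $w=w(z)$ reduction is done, and such fields manifestly commute), and then $(1)$ of \cref{thm:SymmetryIsRectifiability} holds, giving a (possibly different) rectifying chart; but one then argues the given $\Phi$ already is of that form because the $z$-coordinate is determined up to reparametrisation by $\nu = d\psi(z)$ and the $\theta$-coordinates by the requirement that $\beta, \vartheta$ have constant covelocities. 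I would present the self-contained coordinate argument rather than the appeal to \cref{thm:SymmetryIsRectifiability} to keep \cref{prop:RectifiabilityisHamada} logically independent, but either works; the one genuine subtlety, as flagged, is the $w = w(z)$ reduction, which I expect to handle by the divergence-free-plus-nonvanishing argument on each torus.
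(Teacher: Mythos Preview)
Your approach has a genuine gap in both directions, and it stems from never using the hypothesis $\iota_u\iota_B\mu = \nu$ (together with $\nu|_\U$ non-vanishing). This relation is what pins down the volume density $w$ as a function of $z$ alone, and without it the argument does not close.

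In the direction $(1)\Rightarrow(2)$, you assert that $d\beta=0$ (equivalently $\Div_\mu B=0$) forces $wa$ and $wb$ to depend only on $z$. This is false: take $a(z)\equiv 1$, $b(z)\equiv 0$, so $B=\partial_{\theta_1}$; then $\Div_\mu B=0$ only says $\partial_{\theta_1}w=0$, and $w$ may still depend on $\theta_2$. In the direction $(2)\Rightarrow(1)$, the step ``$B(w)=0$'' is not actually derived: once $wB^1,wB^2$ are functions of $z$, the divergence-free condition $\partial_{\theta_1}(wB^1)+\partial_{\theta_2}(wB^2)=0$ is vacuous, so it yields no information on $w$. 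Even if $B(w)=0$ held, it would not force $w$ constant in $(\theta_1,\theta_2)$ on tori where $B$ has closed orbits. The appeal to \cref{thm:SymmetryIsRectifiability} is also unavailable, since verifying $[u,B]=0$ already requires knowing the coefficients depend only on $z$.

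The paper's argument is purely algebraic and avoids all of this. Writing $\mu=\rho\,d\theta_1\wedge d\theta_2\wedge dz$ and $B=B^1\partial_{\theta_1}+B^2\partial_{\theta_2}$, $u=u^1\partial_{\theta_1}+u^2\partial_{\theta_2}$ (the absence of $\partial_z$-components being forced by $\nu(B)=\nu(u)=0$), the hypothesis $\iota_u\iota_B\mu=\nu$ reads
\[
\rho\,(B^1u^2-B^2u^1)=\psi'(z),
\]
with $\psi'(z)\neq 0$ by assumption. For $(1)\Rightarrow(2)$: if $B^i,u^i$ are functions of $z$, so is $B^1u^2-B^2u^1$, hence so is $\rho$, hence so are $\rho B^i,\rho u^i$. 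For $(2)\Rightarrow(1)$: if $\rho B^i,\rho u^i$ are functions of $z$, then
\[
(\rho B^1)(\rho u^2)-(\rho B^2)(\rho u^1)=\rho\cdot\rho(B^1u^2-B^2u^1)=\rho\,\psi'(z)
\]
is a function of $z$, so $\rho$ is, and hence $B^i=(\rho B^i)/\rho$ and $u^i=(\rho u^i)/\rho$ are as well. No dynamics on the tori, no divergence arguments, are needed.
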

Following \cite{burby2021integrability}, the coordinate functions $(\theta_1,\theta_2,z)$ satisfying Equation \ref{eq:HamdaForm} in \cref{prop:RectifiabilityisHamada} are \emph{Hamada coordinates for $(B,\nu,\mu)$ and $u$}.

\begin{proof}[Proof of \cref{prop:RectifiabilityisHamada}]
Assume that $\nu = d\psi(z)$ for some smooth $\psi : I \to \R$. Write
\begin{align*}
    B &= B^1\partial_{\theta_1} + B^2 \partial_{\theta_2},\\
    u &= u^1 \partial_{\theta_1} + u^2 \partial_{\theta_2},\\
    \mu &= \rho d\theta_1\wedge d\theta_2 \wedge dz ,
\end{align*}
where $B^1,B^2,u^1,u^2 : \U \to \R$ are smooth and $\rho : \U \to \R$ is smooth and non-vanishing. Then, we have that
\begin{align*}
    \beta &= \rho(B^2 dz \wedge d\theta_1 - B^1 dz \wedge d\theta_2) \\
    \vartheta &= \rho (u^2 dz \wedge d\theta_1 - u^1 dz \wedge d\theta_2) \\
    \iota_u\iota_B \mu &= \rho (B^1 u^2 - B^2u^1) dz.
\end{align*} 
Now, because $\iota_u\iota_B \mu = \nu = d\psi(z) = \psi'(z) dz$, we have that 
\[\rho (B^1 u^2 - B^2u^1) = \psi'(z)\]
and since $\nu|_{\U}$ is non-vanishing, both $\psi'(z)$ and $(B^1u^2 - B^2u^1)$ are non-vanishing. 

Hence, if $B^1,B^2,u^1,u^2$ are functions of $z$, then so is $\rho = \psi'(z)/(B^2u^1 - B^1 u^2)$ and thus $\rho B^1, \rho B^2, \rho u^1, \rho u^2$ are functions of $z$. Conversely, if $\rho B^1, \rho B^2, \rho u^1, \rho u^2$ are functions of $z$, then so is the function
\[\frac{(\rho B^1) (\rho u^2) - (\rho B^2)(\rho u^1)}{\psi'(z)} = \frac{\rho^2(B^1 u^2 - B^2u^1)}{\rho (B^1 u^2 - B^2u^1)} = \rho\]
and thus $B^1,B^2,u^1,u^2$ are functions of $z$. This establishes the claim.
\end{proof}

By \cref{rmk:conformalsymmetryoffluxsystems}, \cref{thm:SymmetryIsRectifiability}, and the proof of \cref{prop:RectifiabilityisHamada}, the following is immediate.

\begin{corollary}\label{cor:toruscoords}
Let $(B,\nu,\mu)$ be a tangential flux system. Let $S$ be a regular torus. Then, there exists a neighborhood $\U$ of $S$ and a diffeomorphism $\Phi = (\theta_1,\theta_2,z) : \U \to \T^2\times I$ where $I$ is some interval such that
\begin{align*}
    \nu &= d\psi(z),\\     
    \beta := \iota_B\mu &= dF(z) \wedge d\theta_1 - dG(z)\wedge d\theta_2,\\
    B/f &= G'(z)/\rho(z) \partial_{\theta_1} + F'(z)/\rho(z) \partial_{\theta_2},\\ 
    f \mu &= \rho(z) d\theta_1 \wedge d\theta_2 \wedge dz 
\end{align*}
in $\U$, where $f : \U \to \R$, $\psi,\rho, F,G : I \to \R$ are all smooth and $f,\rho > 0$.
\end{corollary}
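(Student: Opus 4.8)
The plan is to assemble \cref{cor:toruscoords} directly from the three ingredients cited: \cref{rmk:conformalsymmetryoffluxsystems}, \cref{thm:SymmetryIsRectifiability}, and the computations in the proof of \cref{prop:RectifiabilityisHamada}. First I would invoke \cref{etaontubenbhd} to obtain a neighborhood $\U_0$ of the regular torus $S$ and a closed 1-form $\eta \in \Omega^1(\U_0)$ with $\eta(B) > 0$; by \cref{thm:uexistsGivenEta} this $\eta$ is adapted to the flux system restricted to $\U_0$, so setting $f = \eta(B) > 0$, $\tB = B/f$, and $\tmu = f\mu$ gives a tangential flux system $(\tB, \nu, \tmu)$ on $\U_0$ together with a vector field $u$ satisfying $\iota_u\mu = (\nu\wedge\eta)/f$, $\iota_u\iota_{\tB}\tmu = \iota_u\iota_B\mu = \nu$, and $[u,\tB] = 0$. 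This is exactly the content of \cref{rmk:conformalsymmetryoffluxsystems}.

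Next I would apply \cref{thm:SymmetryIsRectifiability} to the tangential flux system $(\tB, \nu, \tmu)$ on $\U_0$: condition (1) of that theorem holds for the pair $(u, \tB)$, so condition (2) supplies a smaller neighborhood $\U \subset \U_0$ of $S$ and a diffeomorphism $\Phi = (\theta_1, \theta_2, z) : \U \to \T^2 \times I$ rectifying $(\tB,\nu,\tmu)$ and $u$; that is, $\nu = d\psi(z)$, $\tB = a(z)\partial_{\theta_1} + b(z)\partial_{\theta_2}$, and $u = c(z)\partial_{\theta_1} + d(z)\partial_{\theta_2}$ in these coordinates. Writing $\tmu = \rho(z)\, d\theta_1 \wedge d\theta_2 \wedge dz$ — the density is a function of $z$ alone because, in the notation of the proof of \cref{prop:RectifiabilityisHamada}, $\rho = \psi'(z)/(\tB^2 u^1 - \tB^1 u^2)$ is a ratio of functions of $z$ — one computes $\beta = \iota_B \mu = \iota_{\tB}\tmu = \rho(z) b(z)\, dz \wedge d\theta_1 - \rho(z) a(z)\, dz\wedge d\theta_2$. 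Since each of $\rho a$ and $\rho b$ is a function of $z$ alone, it has a primitive: define $F, G : I \to \R$ by $F'(z) = \rho(z) b(z)$ and $G'(z) = \rho(z) a(z)$, so that $\beta = dF(z)\wedge d\theta_1 - dG(z)\wedge d\theta_2$, and then $a(z) = G'(z)/\rho(z)$, $b(z) = F'(z)/\rho(z)$, which is precisely the claimed form of $B/f$. Finally, $f\mu = \tmu = \rho(z)\, d\theta_1 \wedge d\theta_2 \wedge dz$, completing the list, with $f$ and $\rho$ positive by construction.

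I do not anticipate a serious obstacle here: every nontrivial step has already been carried out in the excerpt, and the corollary is essentially a bookkeeping assembly. The one point requiring a moment's care is the observation that the density $\rho$ depends only on $z$ — this is not hypothesized but deduced, and it is exactly the content of the "forward" half of the equivalence in the proof of \cref{prop:RectifiabilityisHamada} (namely, if $\tB^1,\tB^2,u^1,u^2$ are functions of $z$ then so is $\rho = \psi'(z)/(\tB^2 u^1 - \tB^1 u^2)$, using $\iota_u\iota_{\tB}\tmu = d\psi(z)$ and the non-vanishing of $\nu$ on $\U$). Once that is in hand, existence of the primitives $F, G$ is automatic on the interval $I$, and the remaining identities are a one-line substitution. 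I would therefore write the proof as: cite \cref{rmk:conformalsymmetryoffluxsystems} for the rescaled flux system and symmetry $u$; cite \cref{thm:SymmetryIsRectifiability} for the rectifying chart $\Phi$; then reproduce the short density-and-primitive computation from the proof of \cref{prop:RectifiabilityisHamada} to convert the $\partial_{\theta_i}$-form of $B/f$ into the $dF(z)\wedge d\theta_1 - dG(z)\wedge d\theta_2$ form of $\beta$.
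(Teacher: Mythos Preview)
Your proposal is correct and follows exactly the approach the paper indicates: the paper's proof consists solely of the sentence ``By \cref{rmk:conformalsymmetryoffluxsystems}, \cref{thm:SymmetryIsRectifiability}, and the proof of \cref{prop:RectifiabilityisHamada}, the following is immediate,'' and you have simply (and accurately) unpacked that citation chain, including the key observation that the density $\rho$ depends only on $z$.
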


\subsection{Near an axis}\label{sec:axiscoords}

In this section, flux-coordinates are established in the neighbourhood of an axis. The notion of axis readily extends to flux systems which are not tangential. Indeed, the restriction to $\interior M$ gives a tangential flux system for any flux system on $M$. We will refer in this section to this generalisation of axis. The results will hold provided the axis is non-degenerate.

\begin{definition}\label{def:non-degenerateaxis}
An axis $\gamma \subset \mathcal{T}^1$ of a flux system $(B,\nu,\mu)$ is non-degenerate if there exists a neighborhood $\U$ of $\gamma$ for which $\nu = d\psi$ in $\U$ for some $\psi : \U \to \R$ and $\gamma$ is a Morse-Bott critical manifold of $\psi$.
\end{definition}

\begin{remark}[on \cref{def:non-degenerateaxis}]\label{rmk:inasolidtorus}
The definition is simplified by the following observations about axes of a flux system $(B,\nu,\mu)$.
\begin{enumerate}
    \item If $\gamma$ is an axis, by \cref{lem:addonK} and the proof of \cref{classificationofC,classification}, in any neighborhood $U$ of $\gamma$, there exists a solid torus $R \subset U$ with $R \backslash \gamma$ contained in the pre-toroidal region $\mathcal{T}^2$ with boundary $\partial R$ a regular torus of $(B,\nu,\mu)$. In particular, $\interior R$ is a neighborhood of $\gamma$ for which $\nu$ is exact and $\gamma$ is the critical set of $\nu|_{\interior R}$.
    \item By the Morse-Bott Lemma, an axis $\gamma$ of $(B,\nu,\mu)$ is non-degenerate if and only if there exists a neighborhood $\U$ of $\gamma$ and a diffeomorphism $\Phi = (x,y,\varphi) : \U \to D^2 \times \T$ such that 
    \[\nu = ds\]
    where $s = \tfrac12 (x^2+y^2)$.
\end{enumerate}
\end{remark}

Analogously to the existence of flux-coordinates near a regular torus, we have the following theorem on the existence flux-coordinates near a non-degenerate axis for any flux system $(B,\nu,\mu)$. Having made \cref{rmk:inasolidtorus}, the forward direction follows immediately from the proofs of Theorem V.6 on near-axis Hamada coordinates and Theorem IV.3 in \cite{burby2021integrability} in the so-called elliptic case. The converse is a direct application of Equation \eqref{eq:ConformalNoether}.
\begin{theorem}\label{thm:NearAxisHamadaCoords}
Suppose that $(B,\nu,\mu)$ is a flux system and that that $\gamma$ is an axis. Then, the following are equivalent.
    \begin{enumerate}
        \item There exists a tubular neighbourhood $\U$ of $\gamma$ and a vector field $u$ on $\U$ such that $\iota_u \beta = \nu$ and $[u,B]=0$ in $\U$ and $\gamma$ is non-degenerate.
        \item There exists a tubular neighbourhood $\U$ of $\gamma$, a vector field $u$ on $\U$ such that $\iota_u \beta = \nu$, and a diffeomorphism $\Phi = (x,y,\varphi) :\U\rightarrow D^2\times \T$ such that 
        \begin{equation}\label{eq:HamdaFormForAxis}
        \begin{split}
            \nu &= d\psi(s),\\
            \beta &=  F(s) dx \wedge dy + dG(s) \wedge d\varphi \\
            \vartheta &= \iota_u\mu = K(s) dx \wedge dy + dL(s) \wedge d\varphi,
        \end{split}
        \end{equation}
        in $\U$, where $F,G,K,L,\psi : [0,1/2) \to \R$ are smooth and $s = \tfrac12 (x^2 + y^2)$.
    \end{enumerate}
\end{theorem}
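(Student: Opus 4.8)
The plan is to prove the equivalence of the two conditions in \cref{thm:NearAxisHamadaCoords} following the template of \cref{thm:SymmetryIsRectifiability} and \cref{prop:RectifiabilityisHamada}, but in the solid-torus (axis) setting rather than the $\T^2 \times I$ setting. The forward direction (1)$\Rightarrow$(2) is where the real content lies; the converse (2)$\Rightarrow$(1) is a direct computation using Equation~\eqref{eq:ConformalNoether}, which I would dispatch first.

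\textbf{Converse, (2)$\Rightarrow$(1).} Assume the coordinates $(x,y,\varphi)$ and the forms in \eqref{eq:HamdaFormForAxis}. The form of $\nu = d\psi(s)$ with $s = \tfrac12(x^2+y^2)$ already exhibits $\gamma = \{x = y = 0\}$ as a Morse--Bott critical manifold of $\psi$ (by \cref{rmk:inasolidtorus}(2)), so $\gamma$ is non-degenerate. It remains to produce $u$ with $\iota_u\beta = \nu$ and $[u,B] = 0$. Since $\vartheta = \iota_u \mu = K(s)\,dx\wedge dy + dL(s)\wedge d\varphi$ determines $u$, I would read off $u$ explicitly (a poloidal-plus-toroidal field with coefficients functions of $s$), verify $\iota_u \iota_B \mu = \nu$ by the same bilinear bookkeeping as in the proof of \cref{prop:RectifiabilityisHamada} (the forms $\beta,\vartheta$ being expressed in the same ``Hamada'' shape forces the relevant Jacobian-type quantity to be a function of $s$), and then invoke \eqref{eq:ConformalNoether}: since $\nu$ is closed and $u$ is divergence-free with respect to $\mu$ (both visible from the $s$-dependence of all coefficients), $\iota_{[u,B]}\mu = d\iota_u\iota_B\mu - (\Div_\mu u)\iota_B\mu = d\nu - 0 = 0$, so $[u,B] = 0$.

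\textbf{Forward direction, (1)$\Rightarrow$(2).} Suppose $\U$ is a tubular neighbourhood of the non-degenerate axis $\gamma$ with $u$ satisfying $\iota_u\beta = \nu$ and $[u,B] = 0$. By \cref{rmk:inasolidtorus}, shrinking $\U$, I may assume $\U \cong D^2 \times \T$ with $\nu = ds$, $s = \tfrac12(x^2+y^2)$, and $\gamma = \{s=0\}$; also $\U\setminus\gamma$ lies in the pre-toroidal region with the level sets of $s$ being invariant tori of both $B$ and $u$. The strategy is exactly that of the near-axis Hamada coordinate construction of \cite{burby2021integrability} (Theorems~V.6 and IV.3, elliptic case): on each torus $\{s = \text{const}\}$ the commuting pair $(B,u)$ is, by the Arnol'd--Liouville argument, simultaneously rectifiable, and one assembles these rectifications smoothly across $s$ — including across the singular fibre $\gamma$ — using the Morse--Bott normal form. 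I would point out that the hypotheses of \cref{thm:NearAxisHamadaCoords} are strictly weaker than those in \cite{burby2021integrability} (we do not assume $u$ divergence-free a priori, only $[u,B]=0$; but \cref{lem:divIsCommute} shows a presymplectic $u$ with $[u,B]=0$ is automatically divergence-free, and $u$ is presymplectic since $\iota_u\beta = \nu$ is closed, so in fact the hypotheses coincide once unpacked), so the cited proofs apply verbatim. This yields coordinates $(x,y,\varphi)$ on a possibly smaller tubular neighbourhood in which $\nu = d\psi(s)$, and $\beta$, $\vartheta$ take the claimed form with coefficients depending only on $s$; the exact-1-form pieces $dG(s)\wedge d\varphi$ and $dL(s)\wedge d\varphi$ and the area-form pieces $F(s)\,dx\wedge dy$, $K(s)\,dx\wedge dy$ arise respectively from the toroidal and poloidal components of the rectified fields, matched to $\mu$.

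\textbf{Main obstacle.} The genuine difficulty — and the reason to lean on \cite{burby2021integrability} rather than reprove it — is the behaviour \emph{at the axis} $\gamma$: the torus foliation degenerates there, so the fibrewise Arnol'd--Liouville rectification must be shown to extend smoothly (not merely continuously) across $s = 0$, which is precisely where the Morse--Bott (``elliptic'') non-degeneracy hypothesis is used and where the careful analysis in \cite{burby2021integrability} is needed. The remaining work in this paper's proof is therefore chiefly (i) checking that our weaker-looking hypotheses really do reduce to theirs, via \cref{lem:divIsCommute} and \cref{lem:PresympCriteria}, and (ii) translating the rectified-field statement into the flux-form language of \eqref{eq:HamdaFormForAxis} by the same elementary coefficient computation as in \cref{prop:RectifiabilityisHamada}, adapted from $d\theta_1\wedge d\theta_2\wedge dz$ to $dx\wedge dy\wedge d\varphi$.
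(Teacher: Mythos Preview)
Your proposal is correct and follows essentially the same approach as the paper: for (1)$\Rightarrow$(2) you invoke \cref{rmk:inasolidtorus} and then the proofs of Theorems~V.6 and IV.3 in \cite{burby2021integrability} (elliptic case), and for (2)$\Rightarrow$(1) you use Equation~\eqref{eq:ConformalNoether} directly. Your additional unpacking of why the hypotheses here match those of \cite{burby2021integrability} via \cref{lem:divIsCommute}, and your identification of the smooth extension across $s=0$ as the crux handled by the Morse--Bott assumption, are helpful elaborations beyond the paper's terse treatment but do not depart from its line of argument.
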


Following \cite{burby2021integrability} the coordinate functions $(x,y,\varphi)$ satisfying Equation \ref{eq:HamdaFormForAxis} in \cref{thm:NearAxisHamadaCoords} are known as \emph{near-axis Hamada coordinates for $(B,\nu,\mu)$ and $u$}. We can also reformulate this condition to one explicitly on the vector fields $B$ and $u$ as follows.

\begin{proposition}
Let $(B,\nu,\mu)$ be a flux system on $M$, $\gamma$ be a non-degenerate axis, and $\U$ be a neighborhood of $\gamma$ for which $\nu|_{\U}$ has $\gamma$ as its critical set. Let $u$ be a vector field on $\U$ satisfying $\iota_u \iota_B \mu = \nu$, and $\Phi = (x,y,\varphi) : \U \to D^2 \times \T$ be a diffeomorphism where $D^2$ is the open unit disk. Then, the following are equivalent.
\begin{enumerate}
    \item The diffeomorphism $\Phi$ gives near axis Hamada coordinates for $(B,\nu,\mu)$ and $u$
    \item The diffeomorphism $\Phi$ is such that the following is satisfied.
    \begin{align*}
        \nu &= d\psi(s),\\
        B &= a(s) (y\partial_x-x\partial_y) + b(s) \partial_\varphi,\\
        u &= c(s) (y\partial_x-x\partial_y) + d(s) \partial_\varphi. 
    \end{align*}
    in $\U$, where $a,b,c,d,\psi : [0,1/2) \to \R$ are smooth and $s = \tfrac12 (x^2+y^2)$.
    \end{enumerate}
\end{proposition}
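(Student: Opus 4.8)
The plan is to mirror the proof of \cref{prop:RectifiabilityisHamada}, with the flat poloidal--toroidal frame of the solid torus $D^2 \times \T$ playing the role of the frame of $\T^2 \times I$. Both conditions include $\nu = d\psi(s)$, and I take this as the common hypothesis of each direction, so that $\nu = \psi'(s)\,ds$ with $ds = x\,dx + y\,dy$; since $\gamma$ is precisely the critical set of $\nu|_\U$ and $ds$ vanishes only along $\gamma$, this forces $\psi'(s) \neq 0$ for $s \in (0,1/2)$. First I would record $\mu = \rho\, dx \wedge dy \wedge d\varphi$ with $\rho \in C^\infty(\U)$ nowhere zero, and write $B = B^x\partial_x + B^y\partial_y + B^\varphi\partial_\varphi$ and similarly for $u$. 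Since $y\partial_x - x\partial_y$ is a genuine smooth vector field on $D^2 \times \T$ (it is $-\partial_\theta$ in polar coordinates, smooth across the axis), condition~2 for $B$ is exactly the system $B^x = a(s)\,y$, $B^y = -a(s)\,x$, $B^\varphi = b(s)$ for smooth functions $a,b$ of $s$, and likewise for $u$.

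Next I would translate condition~1 into coordinates. A direct computation gives $\beta = \iota_B\mu = \rho\big(B^x\,dy\wedge d\varphi - B^y\,dx\wedge d\varphi + B^\varphi\,dx\wedge dy\big)$, while $dG(s)\wedge d\varphi = G'(s)(x\,dx + y\,dy)\wedge d\varphi$; matching coefficients shows that $\beta = F(s)\,dx\wedge dy + dG(s)\wedge d\varphi$ is equivalent to $\rho B^x = G'(s)\,y$, $\rho B^y = -G'(s)\,x$, $\rho B^\varphi = F(s)$, together with the analogous three identities for $\vartheta, u, K, L$ in place of $\beta, B, F, G$. So everything reduces to comparing the systems $\{B^x = ay,\ B^y = -ax,\ B^\varphi = b\}$ and $\{\rho B^x = G'y,\ \rho B^y = -G'x,\ \rho B^\varphi = F\}$, which would be immediate once $\rho$ is known to depend on $s$ alone.

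The key step is therefore to show that $\rho$ depends only on $s$, and this is where the equation $\iota_u\iota_B\mu = \nu = \psi'(s)\,ds$ is used. Computing $\iota_u\beta$ in coordinates under either system, the $d\varphi$-component vanishes automatically, while the $dx$- and $dy$-components force, after cancelling the common factor $x$ (respectively $y$), the scalar identity $\rho\,(bc - ad) = \psi'(s)$ from condition~2, or $F(s)L'(s) - G'(s)K(s) = \rho\,\psi'(s)$ from condition~1. Since $\psi'(s) \neq 0$ for $s \in (0,1/2)$, on $\{s > 0\}$ either identity exhibits $\rho$ as a ratio of functions of $s$; hence $\rho$ is constant on every torus $\{s = \mathrm{const}\}$ with $s > 0$, and then on all of $\U$ by continuity. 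Finally, a smooth function on $D^2 \times \T$ that is constant on these tori is a smooth function of $s$ on $[0,1/2)$, so $\rho = \rho(s)$.

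With $\rho = \rho(s) > 0$ in hand the equivalence becomes purely algebraic. Starting from condition~2 one sets $F(s) = \rho(s)b(s)$, $G(s) = \int_0^s \rho(t)a(t)\,dt$, $K(s) = \rho(s)d(s)$, $L(s) = \int_0^s \rho(t)c(t)\,dt$ and reads off condition~1; starting from condition~1 one sets $a(s) = G'(s)/\rho(s)$, $b(s) = F(s)/\rho(s)$, $c(s) = L'(s)/\rho(s)$, $d(s) = K(s)/\rho(s)$ and reads off condition~2; in both directions $\nu = d\psi(s)$ is shared. I expect the only delicate points to be the two facts used near the axis: that $bc - ad$ (respectively $FL' - G'K$) cannot vanish for $s > 0$, which is forced by $\psi'(s) \neq 0$ together with the displayed identities; and the passage from ``constant on the tori $\{s = \mathrm{const}\}$'' to ``smooth function of $s$'' including at $s = 0$. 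Both are handled as indicated above.
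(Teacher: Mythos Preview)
Your proposal is correct and follows precisely the approach the paper intends: the paper's proof is stated to be ``essentially the same as that of \cref{prop:RectifiabilityisHamada}'', and you have carried out exactly that adaptation to the solid-torus frame $(y\partial_x - x\partial_y,\partial_\varphi)$, including the key identity $\rho(bc-ad)=\psi'(s)$ (respectively $FL'-G'K=\rho\psi'(s)$) that pins down $\rho=\rho(s)$. Your extra care about smoothness of $\rho$ at $s=0$ is appropriate for the near-axis setting and is the only genuinely new wrinkle relative to \cref{prop:RectifiabilityisHamada}.
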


The proof is essentially the same as that of \cref{prop:RectifiabilityisHamada}. In particular, as in the case of a regular torus, by \cref{rmk:conformalsymmetryoffluxsystems}, \cref{thm:NearAxisHamadaCoords}, the following is immediate.

\begin{corollary}\label{cor:axiscoords}
    If $\gamma$ is a non-degenerate axis of a flux system $(B,\nu,\mu)$ then there exists a tubular neighborhood $\U$ of $\gamma$, a diffeomorphism $\Phi = (x,y,\varphi) : \U \to D^2\times \T$ where $D^2$ is the open unit disk such that
    \begin{align*}
        \beta &= F(s) dx \wedge dy + dG(s) \wedge d\varphi,\\
        B/f &=  G'(s)/\rho(s) (y \partial_x - x \partial_y) + F(s)/\rho(s) \partial_\varphi,\\
        f \mu &= \rho(s) dx \wedge dy \wedge d\varphi 
    \end{align*}
    in $\U$, where $s = \tfrac12 (x^2+y^2)$ and $a,b,\rho: [0,1/2) \to \R$ are smooth and $\rho > 0$.
\end{corollary}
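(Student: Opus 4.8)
The plan is to deduce \cref{cor:axiscoords} exactly as in the regular–torus case (cf.\ \cref{cor:toruscoords}): first rescale near $\gamma$ to convert a conformal symmetry into a genuine one, then invoke \cref{thm:NearAxisHamadaCoords}, and finally read off $B/f$ and $f\mu$ by a short coordinate computation. For the rescaling, note that an axis of $(B,\nu,\mu)$ is a $1$-dimensional periodic orbit of $B$ lying in $\interior M$, so by \cref{prop:etaonarbaxis} (as recorded in \cref{rmk:conformalsymmetryoffluxsystems}) there is a neighbourhood of $\gamma$ carrying a closed $1$-form $\eta$ with $\eta(B)>0$; such $\eta$ is adapted to $(B,\nu,\mu)$ there. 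With $f=\eta(B)$, $\tB=B/f$, $\tmu=f\mu$, \cref{thm:uexistsGivenEta} supplies a vector field $u$ on that neighbourhood with $\iota_u\iota_{\tB}\tmu=\iota_u\iota_B\mu=\nu$, with $[u,\tB]=0$, and with $u$ divergence-free with respect to $\tmu$. I would then observe that $(\tB,\nu,\tmu)$ is again a flux system and $\gamma$ is still a non-degenerate axis of it: rescaling by the positive function $f$ changes neither the orbit $\gamma$ nor the $1$-form $\nu$, and non-degeneracy is a property of $\nu$ alone (\cref{def:non-degenerateaxis}). Hence, after shrinking to a tubular neighbourhood $\U$ of $\gamma$ inside the domain of $\eta$, all the hypotheses of item~1 of \cref{thm:NearAxisHamadaCoords} hold for $(\tB,\nu,\tmu)$ and this $u$.

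Applying \cref{thm:NearAxisHamadaCoords} (item~1 $\Rightarrow$ item~2) to $(\tB,\nu,\tmu)$ and $u$ then produces a tubular neighbourhood $\U$ and a diffeomorphism $\Phi=(x,y,\varphi):\U\to D^2\times\T$ with, writing $s=\tfrac12(x^2+y^2)$,
\[\nu=d\psi(s),\qquad \beta:=\iota_B\mu=\iota_{\tB}\tmu=F(s)\,dx\wedge dy+dG(s)\wedge d\varphi,\qquad \iota_u\tmu=K(s)\,dx\wedge dy+dL(s)\wedge d\varphi,\]
for smooth $F,G,K,L,\psi$ on $[0,1/2)$ (here $\iota_B\mu=\iota_{\tB}\tmu$ since $\tB=B/f$ and $\tmu=f\mu$). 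This already gives the first displayed equation of \cref{cor:axiscoords}. It remains to extract $\tB$ and $\tmu$: by the preceding proposition — whose proof is the near-axis analogue of that of \cref{prop:RectifiabilityisHamada} — in these coordinates $\tB=a(s)(y\partial_x-x\partial_y)+b(s)\partial_\varphi$ and $\tmu=\rho(s)\,dx\wedge dy\wedge d\varphi$ for smooth $a,b,\rho$ with $\rho>0$. Using $x\,dx+y\,dy=ds$ one computes $\iota_{\tB}\tmu=\rho(s)a(s)\,ds\wedge d\varphi+\rho(s)b(s)\,dx\wedge dy$, and comparison with the expression for $\beta$ forces $\rho a=G'$ and $\rho b=F$. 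Therefore $B/f=\tB=(G'(s)/\rho(s))(y\partial_x-x\partial_y)+(F(s)/\rho(s))\partial_\varphi$ and $f\mu=\tmu=\rho(s)\,dx\wedge dy\wedge d\varphi$, with $f=\eta(B)>0$ and $\rho>0$, as claimed.

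I do not expect a genuine obstacle here: the corollary is essentially a repackaging of \cref{thm:NearAxisHamadaCoords} together with the construction of \cref{rmk:conformalsymmetryoffluxsystems}. The only points needing a little care are (i) verifying that passing to $(\tB,\nu,\tmu)$ preserves non-degeneracy of $\gamma$ and promotes the conformal symmetry to a genuine divergence-free symmetry of $\tB$, so that \cref{thm:NearAxisHamadaCoords} is applicable, and (ii) the elementary interior-product computation with $ds=x\,dx+y\,dy$ that converts the near-axis Hamada form of $\beta$ into the stated formulas for $B/f$ and $f\mu$; neither is substantial.
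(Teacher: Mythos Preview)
Your proposal is correct and follows essentially the same approach as the paper, which simply declares the corollary immediate from \cref{rmk:conformalsymmetryoffluxsystems} and \cref{thm:NearAxisHamadaCoords}. You have merely made explicit the rescaling step, the preservation of non-degeneracy under rescaling, and the interior-product computation with $ds=x\,dx+y\,dy$ that the paper leaves implicit.
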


\section{Discussion}\label{sec:discuss}

This work considered flux systems on a manifolds with (possibly empty) boundary. \cref{thm:uexistsGivenEta} identified an algebraic condition, namely the existence of an adapted 1-form, in order to conclude that the flux system had a conformal symmetry. This can be viewed as a partial converse to a conformal Noether theorem (\cref{thm:ForwardNoetherTheorem}). Assuming that the flux system was tangential, in other words, certain natural boundary conditions were obeyed, \cref{thm:etaexists} showed that the toroidal region is an apriori open set on which an adapted 1-form exists and in turn, a conformal symmetry. This provides a converse to a conformal Noether theorem on the topologically simplest region generated by the flux system. However, there remains some open questions and interesting further directions from the perspective of both plasma physics, and of a purely mathematical view.

In the context of plasma physics, this work demonstrates that structural features of magnetic fields, such as the existence of certain flux-coordinates, or symmetries, can be deduced from a flux function (first integral) alone, and the stronger assumption that the field satisfies MHS is not required. This has a direct implication for Grad's Conjecture \cite{Grad1967} which loosely states, under some non-degeneracy assumptions appealing to plasma confinement, that an MHS field with nested flux surfaces must possess a continuous geometric symmetry. This paper reveals that these structural features are purely topological in nature and that, because they are present for a large family of divergence-free vector fields, they alone do not give any insight into Grad's conjecture.

It is not known whether symmetry and first integrals are essential for good confinement; chaotic fields are in some scenarios desirable for the exhaust of heat and deleterious impurities~\cite{pedersenExperimentalConfirmationEfficient2022}. In the face of Grad's conjecture, if symmetry and first integrals are mathematically impossible beyond Euclidean symmetry, the flavour of the question changes to how "good" or "close" fields are to "useful" or "approximate" symmetry. These questions are primordial for the viability of stellarators and other non-axisymmetric magnetic confinement devices.

Turning to a more mathematical view of this paper, a first theoretical question is that of the global existence of an adapted 1-form $\eta$, for any flux system $(B,\nu,\mu)$. In this work, \cref{thm:etaexists} was proved, guaranteeing for any flux-system the existence of an adapted 1-form in the toroidal region. This region is the most relevant for flux systems appearing in plasma physics. However, mathematically it is crucial to ask whether the toroidal region is the largest possible region on which $\eta$ exists. The answer is no. Indeed, the flux system $(B,\nu,\mu)$ on $M = \R^2 \times \R/\mathbb{Z}$ given in \cref{NonAutHamExample} with
\[ B = y\partial_x +(2x-4x^3)\partial_y + \partial_t, \qquad \nu = d\psi, \qquad \mu = dx \wedge dy \wedge dt\]
and $\psi = \tfrac12 y^2 - x^2(1-x^2)$, is such a counter-example. The toroidal region is $\mathcal{T} = M \backslash \{\psi = 0\}$ and yet $\eta = dt$ is a 1-form adapted $(B,\nu,\mu)$ which is defined globally. More generally, any flux-system satisfying the hypothesis of \cref{prop:etaExists} for which $M$ is not the toroidal region serves as a counter-example. Besides including \cref{NonAutHamExample}, this also includes contact systems and cosymplectic systems.

Having established the fact that $\eta$ can exist on a larger region than the toroidal region, the natural question is then what is the largest region $\eta$ can be guaranteed. Unfortunately, finding an adapted 1-form for a flux system in high generality appears difficult. One approach to start finding an adapted 1-form of a flux system $(B,\nu,\mu)$ in a general setting can be roughly summarised as follows. On the non-critical set $U = \{\nu \neq 0\}$, assuming that $(B,\nu,\mu)$ is tangential, the leaves $L$ of the foliation generated by $\nu$ in $U$ have the property that $B$ is divergence-free with respect to the area-form $\mu_L = L^*i_N \mu$ in $L$ where $N : U \to TM$ is a vector field satisfying $\nu(N) = 1$. To obtain an adapted 1-form in $U$, one needs a closed 1-form $\eta_L$ on $L$ such that $\eta_L(B) > 0$. This problem is solved in this paper when $L$ is a 2-torus by Kolmogorov's Theorem \cite{perrella2022rectifiability}. In general, finding $\eta_L$ is purely about structural properties of non-vanishing, area preserving vector fields on $L$ and thus may depend on the differential topology of $L$. Due to the non-vanishing and area preserving properties, the possible dynamics are reduced and hence the desired $\eta_L$ for each $L$ might be obtained. However, even if each $\eta_L$ is obtained, one still needs to glue each of the $\eta_L$'s in $U$ together. This involves the structure of the foliation and hence the structure of $\nu$. Restricting to the toroidal region made the gluing relatively easy because the $L$ were compact and flowouts preserving the foliation could be used. Such tools are perhaps not usable for general $L$.

If $\eta$ can be found on the region where $\nu$ is non-critical, it still remains to show $\eta$ exists on regions including the critical points. The most natural hypothesis for the critical region $K = \{\nu = 0\}$ (or a subsets thereof) is that it consists of integral curves of $B$ which are open in the relative topology of $K$. If these integral curves are 1-dimensional and periodic and contained in $\interior M$, then as observed in \cref{sec:flux-coordinates} there exists an adapted 1-form. This local construction has nothing to do with $\nu$ or the toroidal region and can be done anywhere there is such an integral curve. An interesting consequence of these local adapted 1-forms, both a mathematically and physically, is that the work in \cite{burby2021integrability} should directly imply that flux-coordinates exist for these more general periodic orbits under some Morse-Bott non-degeneracy assumptions. This remains to be carefully checked in the so-called hyperbolic case of their work.

However, this problem of gluing in general cannot be resolved. This was exhibited by the flux system in \cref{theexample} in \cref{sec:obstructiontoglobaleta}. An interesting feature of this example is it in fact possesses a global symmetry despite there not existing any global adapted 1-form. In particular, \cref{thm:uexistsGivenEta} is not the mechanism through which every symmetry of a flux system is generated. This shows that the problem of constructing a global adapted 1-form faces problems of orientation which are not faced by symmetries. Indeed, considering for instance a Reeb cylinder $(C,X)$ where
\[C = \mathbb{R}/2\pi \times [-\pi/2,\pi/2], \qquad X = \sin y \partial_x + \cos y \partial_y\]
with standard coordinates $x,y$, we have that $\partial_x$ is a symmetry of $X$ and its integral curves are positively oriented with those of $X$ along the boundary component $C_1 = \mathbb{R}/2\pi \times \{\pi/2\}$ but negatively oriented with those along the boundary component $C_0 = \mathbb{R}/2\pi \times \{-\pi/2\}$. This is impossible for a closed 1-form to achieve: more precisely, as seen in \cref{prop:Reebcylinder}, there cannot be a closed 1-form $\eta_C$ on $C$ such that $\eta_C(X) > 0$.

While adapted 1-forms are not the only mechanism producing symmetries, \cref{thm:uexistsGivenEta} generates all symmetries of a certain type. More precisely, if $(B,\mu,\nu)$ is a flux system with an adapted 1-form $\eta$, then \cref{thm:uexistsGivenEta} there exists a vector field $u$, a positive function $f$ and a 1-form $\eta'$ such that
\begin{equation}\label{eq:kindofsymmetry}
  i_ui_B \mu = \nu, \qquad \mathcal{L}_u f \mu = 0, \qquad \eta'(u) = 0, \qquad \eta'(B) > 0
\end{equation}
where of course, as discussed, the first two equations imply that $u$ is a conformal symmetry of $B$. Under our assumptions, $\eta'$ may be taken as $\eta$ and $f$ as $\eta(B)$. Conversely, if one starts with a symmetry $u$ for which there exists an $\eta'$ and an $f$ as in Equation \eqref{eq:kindofsymmetry}, then the 1-form $\eta = f\eta'/\eta'(B)$ is an adapted 1-form. Indeed, we have $[u,B/f] = 0$, $\eta(B) = f$, and $\eta(u) = 0$ so that
\[d\eta(u,B/f) = u(\eta(B)/f)-B(\eta(u))-\eta([u, B/f]) = 0-0-0 = 0\]
and consequently $d\eta \wedge \nu = 0$, as required.

One necessary condition for a conformal symmetry $u$ to posses such a 1-form $\eta'$ as in \cref{eq:kindofsymmetry} is that $u$ vanishes at the critical points of $\nu$. However, this is not sufficient, as the following example illustrates.

\begin{example}\label{symmetrywhichcanbemodded}
In $\R^3$, set $B = \partial_x$ and $u = (y^2+z^2)(\partial_x + z\partial_y-y\partial_z)$. Then $u$ and $B$ are divergence-free with respect to the standard volume-form $\mu = dx \wedge dy \wedge dz$ and additionally satisfy
\[\iota_{u} \iota_{B} \mu = d\psi\]
where $\psi = -\frac{1}{4}(y^2+z^2)^4$. The critical set of $\psi$ is $\{y = 0 = z\}$ which is nowhere dense in $\R^3$ and $u$ only vanishes on this critical set. However, there does not exist a 1-form $\eta'$ such that
\begin{equation*}
\eta'(u) = 0, \qquad \eta'(B) > 0.
\end{equation*}
Indeed, letting $\eta' = \eta^1 dx + \eta^2 dy + \eta^3 dz$ be a 1-form such that $\eta'(u) = 0$, then
\[0 = \eta'(u) = (y^2+z^2)(\eta'(B) + z\eta^2-y\eta^3)\]
and consequently, because $\{y = 0 = z\}$ is nowhere dense in $\R^3$,
\[\eta'(B) + z\eta^2-y\eta^3 = 0\]
and hence $\eta'(B)$ must vanish on the critical set of $\psi$. 

On the other hand, $(B,d\psi,\mu)$ possesses the adapted 1-form $dx$ and by \cref{thm:uexistsGivenEta}, we must have a symmetry field $\tilde{u}$ generated from $dx$. One finds that $\tilde{u} = (y^2+z^2)(z\partial_y-y\partial_z) = u - (y^2+z^2)B$. Thus, in this instance, a modification of the symmetry $u$ into another symmetry $\tilde{u}$ was necessary for there to exist an $\eta'$ such that $\eta'(\tilde{u}) = 0$ and $\eta'(B) > 0$.
\end{example}

Although in \cref{symmetrywhichcanbemodded} there was a way of modifying the given symmetry $u$ into a symmetry $\tilde{u}$ which posesses an $\eta'$ as in \cref{eq:kindofsymmetry}, this modification is not always possible: as \cref{theexample} has no adapted 1-form, yet still possess a symmetry.

In conclusion, there is still much to understand about the global existence of $\eta$ and more generally, on the global existence of a conformal symmetry $u$ for a given flux-system. Future work would do well to investigate for which domains $\eta$ or these conformal symmetries may exist. The global existence of a conformal symmetry, whether that is through $\eta$ or not, is required to ensure the conformal Noether theorem demonstrated in this paper is as strong as the well-known Noether theorem in symplectic geometry.

\section{Acknowledgements}

The Authors would like to thank the Max Planck Institute in Greifswald for their hospitality during which the core ideas of the paper were formulated. Nathan would like to thank the Simons Collaboration on Hidden Symmetries and Fusion Energy for their financial support during this stay in Greifswald. David Perrella would like to thank Daniel Peralta-Salas for pointing out the idea of \cref{theexample} and providing financial support for his visit to the Instituto de Ciencias Matem{\'a}ticas in Madrid. This paper was written while David Perrella received an Australian Government Research Training Program Scholarship at The University of Western Australia.

\section{Author Declarations}

\subsection{Conflict of Interest}

The authors have no conflicts to disclose.

\subsection{Author Contributions}

\textbf{David Perrella}: Conceptualization (lead), Writing – original draft, Visualization. \textbf{Nathan Duignan}: Conceptualization, Writing – original draft, Visualization. \textbf{David Pfefferlé}: Supervision, Writing – original draft (supporting).

\appendix


\section{Tangential distributions and foliations on manifolds with boundary}\label{app:distandfol}

This appendix serves as a reference for a formal treatment of distributions and foliations which are tangent to the boundary of the underlying manifold. The purpose of this is that the authors have found no formal treatment in the literature. In Appendix \ref{app:Tangential Stefan-Sussman}, tangential distributions and integrability are discussed along with a proof of what we will call The Tangential Stefan-Sussman Theorem. The Stefan-Sussman Theorem reduces to the Frobenius Theorem in the context of the latter. In Appendix \ref{app:intmanifolds and foliations} integrable tangential distributions are related with foliations. The treatment will be rapid due to the close similarity to the case without boundary presented in the books of Lee \cite{Lee2012} and Rudolph and Schmidt \cite{Rudolph2013}. We will largely follow the latter.


\subsection{Tangential Stefan-Sussman Theorem}\label{app:Tangential Stefan-Sussman}

Let $M$ be a manifold with boundary. We first introduce tangential distributions.

\begin{definition}
Fix a subset $D \subset TM$. A $D$-section is a (smooth) vector field on $M$ taking values in $D$ and a local $D$-section is a local vector on $M$ taking values in $D$. The subset $D$ is said to be a distribution on $M$ if for all $x \in M$
\begin{enumerate}
    \item $D_x \coloneqq D \cap T_x M$ is a vector subspace of $T_x M$,
    \item and for all $v \in D_x$, there exists a $D$ section $X$ such that $X|_x = v$.
\end{enumerate}
In this case, for $x \in M$, the number $\dim D_x$ is called the rank of $D$ at $x$. Lastly, $D$ is said to be tangential if for all $x \in \partial M$, the vectors $v \in D_x$ are tangent to the boundary.
\end{definition}

We now introduce integrability. For clarity, in what follows, we will abuse notation by omitting pullbacks and pushfowards of tangent spaces via inclusions.

\begin{definition}
Fix a distribution $D$ on $M$. A connected submanifold $L \subset M$ is said to be an integral manifold of $D$ if $T_x L = D_x$ for all $x \in L$. The distribution $D$ is said to be integrable if through all points $x \in M$, there passes an integral manifold $L$ of $D$.
\end{definition}

\begin{remark}
An integrable distribution is necessarily tangential.
\end{remark}

We will now introduce the other notions for the Tangential Stefan-Sussman Theorem (Theorem \ref{Tangential Stefan-Sussman}).

\begin{definition}

Let $D$ be a tangential distribution $D$. Then $D$ is called
\begin{enumerate}
    \item involutive if whenever $X,Y$ are $D$-sections, so is $[X,Y]$.
    \item homogenous if for any local $D$-section $X$ and any point $(x,t)$ in the domain of the flow $\psi^X$ of $X$ (which is an open subset of $M \times \R$ because $X$ is tangent to the boundary), we have
    \begin{equation*}
    T\psi^X_t|_x D_x = D_{\psi^X_t(x)}.
    \end{equation*}
\end{enumerate}
\end{definition}

The remaining notion is that of an adapted chart, which is important for the connection with foliations. Set $n = \dim M$. We will denote by $\mathbb{H}^n = \{x^n \geq 0\}$ the upper half plane.

\begin{definition}
Let $D$ be a tangential distribution on $M$. Set $n = \dim M$. Let $x \in M$ and $r$ be the rank of $D$ at $x$. A local chart $(U,\varphi)$ on $M$ is said to be adapted to $D$ at $x$ if \begin{enumerate}
    \item $\varphi(x) = 0$ and there exists $\epsilon > 0$ such that $\varphi(U) = (-\epsilon,\epsilon)^n \cap \mathbb{H}^n$ if $x \in \partial M$ and $\varphi(U) = (-\epsilon,\epsilon)^n$ otherwise,
    \item the coordinate vector fields $\partial_{\varphi^1},...,\partial_{\varphi^r}$ are local $D$-sections
    \item for all $c \in \R^{n-r}$, the rank of $D$ along $U_c \coloneqq \varphi^{-1}((-\epsilon,\epsilon)^r \times \{c\})$ is constant.
\end{enumerate}
\end{definition}

With respect to these definitions, The Tangential Stefan-Sussman Theorem reads as follows.

\begin{theorem}[Tangential Stefan-Sussman]\label{Tangential Stefan-Sussman}
Let $D$ be a tangential distribution on $M$. Then the following are equivalent.
\begin{enumerate}
    \item[S1.] $D$ is integrable,
    \item[S2.] $D$ is involutive and has constant rank along integral curves of local $D$ sections,
    \item[S3.] $D$ is homogenous,
    \item[S4.] for every $x \in M$, there exists a local chart adapted to $D$ at $x$.
\end{enumerate}
\end{theorem}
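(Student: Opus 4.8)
The plan is to prove the cycle $S1 \Rightarrow S2 \Rightarrow S3 \Rightarrow S4 \Rightarrow S1$. The one feature special to the boundary that makes the classical flow-box manipulations go through unchanged is that a local $D$-section $X$, being tangent to $\partial M$, has a flow $\psi^X$ defined on an open subset of $M\times\R$ which maps $M$ to $M$ and $\partial M$ to $\partial M$; this is built into the setup and we use it freely. Two standard preliminaries, proved exactly as in the boundaryless treatments of \cite{Rudolph2013,Lee2012}, will be taken for granted. First, an integral manifold $L$ of $D$ is weakly embedded, so that a $D$-section restricts to a smooth vector field on $L$ whose $L$-flow agrees, by uniqueness of integral curves, with the ambient flow. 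Second, if an integral manifold $L$ meets $\partial M$ at a point $x$, then $T_xL = D_x\subseteq T_x\partial M$ and, since $\partial M$ is invariant under flows of $D$-sections while $L$ near $x$ is swept out by concatenations of such flows, $L\cap\partial M$ is open and closed in $L$, hence $L\subset\partial M$.

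For $S1\Rightarrow S2$: if $L$ is an integral manifold through $x$, then $D$-sections $X,Y$ restrict to vector fields on $L$ by the first preliminary, so $[X,Y]$ is tangent to $L$ and $[X,Y]|_x\in T_xL=D_x$; moreover the ambient integral curve of any $D$-section starting in $L$ stays in $L$, so along it $D$ has rank $\dim L$, i.e. rank is constant along integral curves of $D$-sections. For $S3\Rightarrow S4$: fix $x$, put $r=\operatorname{rank}D_x$ (with $r\le n-1$ if $x\in\partial M$ by tangentiality), choose $D$-sections $X_1,\dots,X_r$ with $X_i|_x$ a basis of $D_x$ and a transversal $\sigma$ to $D_x$ through $x$ — chosen, when $x\in\partial M$, inside a boundary chart so that $\sigma$ meets $\partial M$ exactly in a coordinate slice — and set $\Psi(t_1,\dots,t_n)=(\psi^{X_1}_{t_1}\circ\cdots\circ\psi^{X_r}_{t_r})(\sigma(t_{r+1},\dots,t_n))$. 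Its differential at $0$ is invertible, so $\Psi$ is a diffeomorphism from a neighbourhood of $0$ (lying in $\mathbb{H}^n$ when $x\in\partial M$, since the $\psi^{X_i}$ preserve $\partial M$) onto a neighbourhood of $x$; the inverse $\varphi$ is an adapted chart, because $\partial_{\varphi^1}=X_1$ and inductively $\partial_{\varphi^i}=(\psi^{X_1}_{t_1}\circ\cdots\circ\psi^{X_{i-1}}_{t_{i-1}})_*X_i$ is a $D$-section by repeated application of homogeneity, while the rank of $D$ along a slice $\{t_{r+1}=c_1,\dots\}$ is constant since its points lie on one concatenation of integral curves of the $X_i$ and rank is constant along each. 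For $S4\Rightarrow S1$: in an adapted chart the slice $\varphi^{-1}((-\epsilon,\epsilon)^r\times\{0\})$ has, at each of its points $y$, tangent space containing $\partial_{\varphi^1}|_y,\dots,\partial_{\varphi^r}|_y\in D_y$, and since $D$ has rank $r$ along the slice (third axiom of an adapted chart) that tangent space equals $D_y$, so the slice is an integral manifold through $x$.

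The heart of the proof, and the step I expect to fight with, is $S2\Rightarrow S3$: deriving homogeneity from involutivity together with constant rank along $D$-orbits. The constant-rank hypothesis is genuinely indispensable — on $\R^2$ the pair $X=\partial_{x_1}$, $Y=\varphi(x_1)\partial_{x_2}$ with $\varphi$ vanishing exactly on $\{x_1\le0\}$ generates an involutive distribution that is not homogeneous — so the argument must use it. I would fix a local $D$-section $X$, a point $x$, a time $t$ in the flow domain, set $r=\operatorname{rank}D_x=\operatorname{rank}D_{\psi^X_s(x)}$ for all $s\in[0,t]$ (by hypothesis), and study $\Lambda=\{s\in[0,t]:T\psi^X_{-s}(D_{\psi^X_s(x)})=D_x\}\ni 0$. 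Closedness of $\Lambda$ rests on the claim that constant rank along the orbit upgrades $D$ to an honest smooth rank-$r$ subbundle of $TM$ over the orbit curve — near any orbit point, $r$ $D$-sections independent there stay independent nearby and, being valued in the $r$-dimensional $D_{\psi^X_s(x)}$, span it, hence form a local frame — so that $s\mapsto T\psi^X_{-s}(D_{\psi^X_s(x)})$ is a continuous curve in the Grassmannian and $\Lambda$ is the preimage of a point. For openness, translating along the flow reduces matters to showing $T\psi^X_{-s}(D_{\psi^X_s(x)})=D_x$ for small $|s|$: pick $D$-sections $Z_1,\dots,Z_r$ with $Z_i|_x$ a basis of $D_x$ (so $Z_i|_{\psi^X_s(x)}$ is a basis of $D_{\psi^X_s(x)}$ for small $s$), and observe that $\zeta_i(s):=T\psi^X_{-s}(Z_i|_{\psi^X_s(x)})$ satisfies $\zeta_i'(s)=T\psi^X_{-s}([X,Z_i]|_{\psi^X_s(x)})$; since $[X,Z_i]$ is a $D$-section by involutivity, it equals $\sum_j b_{ij}(s)\,Z_j|_{\psi^X_s(x)}$ for smooth scalars $b_{ij}$, whence $\zeta_i'=\sum_j b_{ij}(s)\zeta_j$ — a linear system with scalar coefficients, so each $\zeta_i(s)$ is a linear combination of $\zeta_1(0),\dots,\zeta_r(0)\in D_x$ and hence $\operatorname{span}\{\zeta_i(s)\}\subseteq D_x$, with equality by dimension count. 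Thus $\Lambda$ is open and closed in the connected set $[0,t]$, so $\Lambda=[0,t]$, and in particular $t\in\Lambda$, i.e. $T\psi^X_t(D_x)=D_{\psi^X_t(x)}$. The subtle point to nail down when writing this carefully is precisely the smooth-subbundle-along-the-orbit claim used for closedness; the rest is bookkeeping with flows, which the tangentiality hypothesis keeps inside $M$.
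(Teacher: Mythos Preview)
Your argument is correct and takes a genuinely different route from the paper's. The paper does not redo the cycle $S1\Rightarrow S2\Rightarrow S3\Rightarrow S4\Rightarrow S1$ on $M$ at all; instead it embeds $M$ as a regular domain in a boundaryless manifold $\tilde M$ (the double), extends $D$ to a distribution $\tilde D$ on $\tilde M$ by declaring $\tilde D_x = T_x\tilde M$ for $x\in\tilde M\setminus M$, and then checks, condition by condition, that $D$ satisfies $Si$ on $M$ if and only if $\tilde D$ satisfies $Si$ on $\tilde M$. The boundaryless Stefan--Sussman theorem (quoted from \cite{Rudolph2013}) then closes the loop. Your approach is more self-contained and demonstrates directly that the classical flow-box and linear-ODE arguments survive unchanged once $D$-sections are tangent to $\partial M$; the paper's approach is shorter and cleanly modular, but hinges on the trick of padding $D$ with the full tangent space off $M$, and its transfer of $S4$ back to $M$ still requires an explicit boundary-respecting flow-box construction not unlike yours.

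One small point worth tightening: you invoke weak embedding of integral manifolds as a preliminary for $S1\Rightarrow S2$, but in the standard boundaryless references weak embedding is itself proved \emph{via} adapted charts (i.e.\ $S4$), so taking it as input risks circularity. You do not actually need it: an immersed submanifold is locally embedded near each of its points, and that is already enough to restrict $D$-sections $X,Y$ to $L$ locally, conclude $[X,Y]|_x\in T_xL=D_x$, and see that the ambient integral curve of a $D$-section through $x\in L$ agrees locally with the $L$-integral curve. With that replacement your cycle is clean.
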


We will now set out to prove Theorem \ref{Tangential Stefan-Sussman}; that the statements S1, S2, S3 and S4 are all equivalent. Using the construction of the double $D(M)$ (see \cite[Example 9.32]{Lee2012}), we may fix a manifold (without boundary) $\tilde{M}$ for which $M$ is a regular domain in $\tilde{M}$. 

\begin{remark}
Recall, $M \subset \tilde{M}$ being a regular domain means that the inclusion $i : M \subset \tilde{M}$ is a proper embedding and that $\dim \tilde{M} = n$. The embeddingness of the inclusion is equivalent to $M \subset \tilde{M}$ having the subspace topology. The properness of the inclusion is equivalent to $M$ being closed in $\tilde{M}$ \cite[Proposition 5.5]{Lee2012}. Because of this, and extension lemmas \cite[Lemma 2.26, Lemma 8.6]{Lee2012}, if $U$ is open in $\tilde{M}$ and $F$ is a function or local vector field defined on $U \cap M$, then there exists an extension $\tilde{F}$ of $F$ defined on $U$.
\end{remark}

Let $D$ be a tangential distribution on $M$. Define $\tilde{D} \subset T\tilde{M}$ by
\begin{align*}
\tilde{D}_x &= D_x \text{ for } x \in M, & \tilde{D}_x &= T_x \tilde{M} \text{ for } x \in \tilde{M} \backslash M.
\end{align*}
Using this as auxiliary, we prove Theorem \ref{Tangential Stefan-Sussman}.

\begin{proof}[Proof of Theorem \ref{Tangential Stefan-Sussman}]

Because Theorem \ref{Tangential Stefan-Sussman} is true for $\tilde{M}$, it suffices to establish that the subset $\tilde{D}$ is a distribution on $\tilde{M}$ and that for $i = 1,2,3,4$, $D$ satisfies statement Si on $M$ if and only if $\tilde{D}$ satisfies statement Si on $\tilde{M}$. 

To this end, let $X$ is a local $D$-section defined on $U\cap M$ where $U$ is some open subset of $\tilde{M}$. Because $M \subset \tilde{M}$ is a regular domain, there exists an extension of $X$ to $\tilde{X}$, a local vector field on $\tilde{M}$ defined on $U$. Moreover, any such extension is a local $\tilde{D}$-section. In particular, this shows that $\tilde{D}$ is a distribution on $\tilde{M}$. The equivalence between $D$ and $\tilde{D}$ satisfying S2 or S3 also follows from this observation and the naturality of Lie brackets and flows.

For S1, and another argument (Proposition \ref{regularity and maximality}) we have the following lemma.

\begin{lemma}\label{integral manifold correspondence}
The following are equivalent.
\begin{enumerate}
    \item $L$ is an integral manifold of $D$
    \item $L$ is an integral manifold of $\tilde{D}$ with $L \cap M \neq \emptyset$.
\end{enumerate}
\end{lemma}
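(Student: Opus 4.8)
The plan is to prove the two implications of the stated equivalence separately; the forward implication is essentially a tautology, while the reverse one turns on a single dimension count along $\partial M$.

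For $(1)\Rightarrow(2)$: if $L$ is an integral manifold of $D$, then by definition $L$ is a nonempty connected submanifold of $M$ with $T_xL=D_x$ for every $x\in L$, so in particular $L\cap M=L\ne\emptyset$. Since $M$ is a regular domain, a boundaryless submanifold of $M$ is again a submanifold of $\tilde M$ (near a point of $\partial M$ such a submanifold must lie inside $\partial M$, as $D$ is tangential), and because $L\subseteq M$ we have $\tilde D_x=D_x=T_xL$ along $L$. Hence $L$ is an integral manifold of $\tilde D$ that meets $M$.

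The content is in $(2)\Rightarrow(1)$, where the only thing to prove is that $L\subseteq M$; granting that, $T_xL=\tilde D_x=D_x$ holds automatically along $L$, and $L$ is a connected submanifold of $M$, so it is an integral manifold of $D$. To see $L\subseteq M$ I would argue by contradiction: were there a point $x'\in L\setminus M$, then $T_{x'}L=\tilde D_{x'}=T_{x'}\tilde M$, forcing $\dim L=n$ and hence $L$ to be an open subset of $\tilde M$. Then $L\cap M$ is nonempty and closed in $L$ (as $M$ is closed in $\tilde M$), and it is also open in $L$: no point $y$ of $L\cap M$ can lie on $\partial M$, since there $T_yL=\tilde D_y=D_y\subseteq T_y\partial M$ would have dimension at most $n-1<n=\dim L$, so $L\cap M=L\cap\interior M$. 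Connectedness of $L$ then gives $L\cap M=L$, contradicting $x'\notin M$.

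The one genuine obstacle is this passage through $\partial M$: I expect the rank bound --- a tangential distribution has rank $\le n-1$ along $\partial M$, so a top-dimensional integral manifold of the auxiliary distribution $\tilde D$ cannot touch $\partial M$ --- to be the crux, while everything else (that $\tilde D_x=D_x$ for $x\in M$, that $M$ is closed in $\tilde M$, and that a submanifold of $\tilde M$ contained in $M$ is a submanifold of $M$) follows directly from the regular-domain picture already set up before the lemma.
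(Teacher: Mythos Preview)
Your proof is correct and follows essentially the same route as the paper's. Both arguments hinge on the same rank bound on $\partial M$: since $D$ is tangential, $\dim\tilde D_y\le n-1$ for $y\in\partial M$, so an $n$-dimensional integral manifold of $\tilde D$ cannot touch $\partial M$; connectedness of $L$ then traps it entirely in $\interior M$ or entirely in $\tilde M\setminus M$. The only cosmetic difference is that the paper phrases this as a contrapositive (if $L$ meets $\tilde M\setminus M$ then $L\subset\tilde M\setminus M$), while you phrase it as a contradiction (if $L$ meets both $M$ and $\tilde M\setminus M$, then $L\cap M$ is a nontrivial clopen subset of the connected set $L$).
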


\begin{proof}[Proof of Lemma \ref{integral manifold correspondence}]
For the less obvious direction, let $L$ is an integral manifold of $\tilde{D}$ with $L \cap (\tilde{M} \backslash M) \neq \emptyset$. Then $\dim L = n$ and so because the rank of $\tilde{D}$ is at most $n-1$ on $\partial M$, we have that $L \cap \partial M = \emptyset$. On the other hand $\interior M$ and $\tilde{M}\backslash M$ are open disjoint sets covering $\tilde{M} \backslash \partial M$. Thus, because $L$ is a connected subset of $M$, we must have that $L \subset \tilde{M} \backslash M$. Contrapositively, if $L$ is an integral manifold of $\tilde{D}$ with $L \cap M \neq \emptyset$, then $L \subset M$. Because $M$ is a regular domain in $\tilde{M}$, $L \subset M$ is a submanifold. Thus, $L$ is an integral manifold of $D$.
\end{proof}

Continuing with S1, observe that the connected components of $\tilde{M}\backslash M$ are integral manifolds of $\tilde{D}$ covering $\tilde{M} \backslash M$. It is then clear by Lemma \ref{integral manifold correspondence} that $D$ satisfies S1 on $M$ if and only if $\tilde{D}$ satisfies S1 on $\tilde{M}$.

For S4, first observe that for $x \in \tilde{M} \backslash M$, any chart $(U,\varphi)$ about $x$ with $U \subset \tilde{M}\backslash M$ satisfying $\varphi(x) = 0$ and $\varphi(U) = (-\epsilon,\epsilon)^n$ for some $\epsilon > 0$ is adapted to $\tilde{D}$ at $x$. Let $x \in \interior M$ then if $(U,\varphi)$ is a chart adapted to $D$ at $x$, $(U,\varphi)$ is a chart adapted to $\tilde{D}$ at $x$. Conversely, if $(U,\varphi)$ is a chart adapted to $\tilde{D}$ at $x$, we may choose $\epsilon' > 0$ sufficiently small so that $U' \coloneqq \varphi^{-1}((-\epsilon',\epsilon')^n) \subset \interior M$. Then, the restricted chart $(U',\varphi' : U \to (-\epsilon',\epsilon')^n)$ is adapted to $D$ at $x$.

Continuing with S4, suppose that $x \in \partial M$. Let $(U,\varphi)$ be a chart adapted to $D$ at $x$. We have $U = U' \cap M$ for some $U'$ open in $\tilde{M}$. The coordinate functions ${\varphi}^i : U \to \R$ may be extended to functions ${\varphi'}^i : U' \to \R$. Then, consider the map $\varphi' = ({\varphi'}^1,...,{\varphi'}^n) : U' \to \R^n$. Then by The Inverse Function Theorem, because $(U,\varphi)$ is a chart, there exists $\epsilon > 0$ so that $\varphi'$ restricts to a chart $(U'',\varphi'')$ on $U'' = \varphi'^{-1}((-\epsilon,\epsilon)^n)$. Because $(U,\varphi)$ is adapted to $D$ at $x$, and $\tilde{D}_y = T_y \tilde{M}$ for all $y \in \tilde{M}\backslash M$, $(U'',\varphi'')$ is a chart adapted to $\tilde{D}$ at $x$. 

Finally, let $(U,\varphi)$ be a chart adapted to $\tilde{D}$ at $x$. Here we adapt ideas in the proof of the Stefan-Sussman Theorem found in \cite{Rudolph2013}. Consider a chart $(V,\kappa)$ of $\tilde{M}$ at $x$ such that $V \cap M = \{\kappa^n \geq 0\}$, and $V \cap \partial M = \{\kappa^n = 0\}$. Setting $r$ to be the mutual rank of $D$ and $\tilde{D}$ at $x$, extend $\partial_{\varphi^1}|_x,...,\partial_{\varphi^r}|_x$ into a basis
\begin{equation*}
\partial_{\varphi^1}|_x,~...,~\partial_{\varphi^r}|_x,~v_{n-r},~...,~v_{n-1},~\partial_{\kappa^n}|_x
\end{equation*}
of $T_x \tilde{M}$ where the $v_{i}$ are tangent to $\partial M$. Then, we may extend the $v_{i}$ to vector fields $X_{i}$ on $\tilde{M}$. Then, consider the local vector fields $X_{i}'$ defined on $U \cap V$ where
\begin{align*}
X_i' &= \partial_{\varphi^i}, & i &= 1,...,r,\\ 
X_i' &= X_i - d\varphi^n(X_i)X_n', & i &= n-r,...,n-1, & X_n' &= \partial_{\kappa^n}.
\end{align*}
Considering the local flows $\psi^i \coloneqq \psi^{X_{i}'}$, there exists $\epsilon > 0$ sufficiently small so that the map $F : (-\epsilon,\epsilon)^n \to \tilde{M}$ given by
\begin{equation*}
F(t_1,...,t_n) = \psi^1_{t_1} \circ ... \circ \psi^n_{t_n}(x)
\end{equation*}
is well-defined. Because $X'_1|_x,...,X'_n|_x$ forms a basis, The Inverse Function Theorem ensures a sufficiently small $\epsilon' > 0$ so that $F|_{(-\epsilon',\epsilon')^{n}}$ is a diffeomorphism onto its open image. Moreover, since the $X_1,...,X_{n-1}$ is tangent to $\partial M$, an integral curve $\gamma_i$ of $X_i$ intersects $\partial M$ if and only if $\gamma_i \subset \partial M$. This invariance implies that
\begin{align*}
F((-\epsilon',\epsilon')^n \cap \mathbb{H}^n) &\subset \{\kappa^n \geq 0\}\\
F((-\epsilon',\epsilon')^n \backslash \mathbb{H}^n) &\subset \{\kappa^n < 0\}.
\end{align*}
thus, restriction of $F$ to $(-\epsilon',\epsilon')^{n} \cap \mathbb{H}^n$ gives a chart $(U',\varphi')$ of $M$ at $x$. Now, because the $X_1,...,X_r$ commute and the construction of $F$, it is easy to see that
\begin{align*}
\partial_{{\varphi'}^i} &= \partial_{\varphi^i}, & i &= 1,...,r.
\end{align*}
so that for each constant $c' \in \R^{n-r}$, there exists a constant $c \in \R^{n-r}$ such that
\begin{equation*}
\varphi'^{-1}((-\epsilon',\epsilon') \times \{c'\}) \subset \varphi'^{-1}((-\epsilon',\epsilon') \times \{c\}).
\end{equation*}
From these observations it immediately follows that $(U',\varphi')$ is an adapted chart to $D$ at $x$. This completes the equivalence between $D$ and $\tilde{D}$ satisfying S4 and thus the proof.
\end{proof}

We will now discuss integrable distributions in relation to foliations.
 
\subsection{Integral manifolds and foliations}\label{app:intmanifolds and foliations}

Let $M$ be a manifold with boundary. Just as in the case of manifolds (see \cite{Lee2012} and \cite{Rudolph2013}), the following definition serves as a specification of regularity of immersed submanifolds.

\begin{definition}
An (immersed) submanifold $L \subset M$ is said to be weakly embedded in $M$ if for any manifold $N$ and smooth map $F : N \to M$ such that $F(N) \subset L$, we have that the restricted map $f : N \to L$ is smooth.
\end{definition}

\begin{remark}
Just as in the case of manifolds without boundary \cite[Theorem 5.33]{Lee2012}, if $L \subset M$ is a weakly embedded submanifold, then $L$ has the unique topology and smooth structure making it into a submanifold of $M$. That is, if $L' \subset M$ is a submanifold having the same underlying set of points as $L$, then, as manifolds, $L' = L$.
\end{remark}

Let $D$ be an tangential distribution on a manifold $M$ with boundary. We will now introduce maximality of integral manifolds of $D$.

\begin{definition}
An integral manifold $L$ of $D$ is said to be maximal if for any integral manifold $K$ of $D$ with $L \cap K \neq \emptyset$, we have that $K \subset L$ is an open submanifold.
\end{definition}

To prove the following proposition which involves the definitions just introduced, fix a manifold $\tilde{M}$ for which $M\subset \tilde{M}$ is a regular domain. Moreover, we consider the associated distribution $\tilde{D}$ just as in Section \ref{app:Tangential Stefan-Sussman}.

\begin{proposition}\label{regularity and maximality}
The following holds.
\begin{enumerate}
    \item At every point $x \in M$, there passes a maximal integral manifold of $D$.
    \item The integral manifolds of $D$ are weakly embedded in $M$.
\end{enumerate}
\end{proposition}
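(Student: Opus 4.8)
The plan is to reduce the statement to the corresponding fact on the boundaryless double $\tilde M$, exactly in the spirit of the proof of the Tangential Stefan--Sussman Theorem given above. Recall that $M \subset \tilde M$ is a regular domain and $\tilde D$ is the distribution on $\tilde M$ agreeing with $D$ on $M$ and equal to the full tangent space on $\tilde M \setminus M$. The key linking fact is \cref{integral manifold correspondence}: a connected set $L$ meeting $M$ is an integral manifold of $\tilde D$ if and only if it is an integral manifold of $D$ (and then $L \subset M$). Since the boundaryless Stefan--Sussman theory (as in \cite{Lee2012,Rudolph2013}) already provides, through every point of $\tilde M$, a \emph{unique} maximal integral manifold of $\tilde D$, and tells us these maximal integral manifolds are weakly embedded in $\tilde M$, both claims should follow by transporting these properties across \cref{integral manifold correspondence}.

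For claim (1), I would fix $x \in M$ and let $\tilde L$ be the maximal integral manifold of $\tilde D$ through $x$ furnished by the boundaryless theory. Since $x \in \tilde L \cap M \neq \emptyset$, \cref{integral manifold correspondence} gives $\tilde L \subset M$ and that $\tilde L$ is an integral manifold of $D$. I then claim $\tilde L$ is maximal as an integral manifold of $D$: if $K$ is an integral manifold of $D$ with $K \cap \tilde L \neq \emptyset$, then $K$ is in particular an integral manifold of $\tilde D$ meeting $\tilde L$, so maximality of $\tilde L$ in $\tilde M$ forces $K \subset \tilde L$ to be an open submanifold. Hence $\tilde L$ is the desired maximal integral manifold of $D$ through $x$.

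For claim (2), let $L$ be an integral manifold of $D$; I want to show it is weakly embedded in $M$. By claim (1) (applied at any point of $L$) and the openness property in the definition of maximality, $L$ is an open submanifold of the maximal integral manifold $\tilde L$ through that point, and $\tilde L$ is weakly embedded in $\tilde M$. Now suppose $N$ is a manifold and $F : N \to M$ is smooth with $F(N) \subset L$. Composing with the inclusion $M \hookrightarrow \tilde M$, $F$ is smooth into $\tilde M$ with image in $\tilde L$; weak embeddedness of $\tilde L$ in $\tilde M$ yields that $F : N \to \tilde L$ is smooth. Finally, since $L$ is an open submanifold of $\tilde L$ and $F(N) \subset L$, the corestriction $F : N \to L$ is smooth as well. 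This gives weak embeddedness of $L$ in $M$.

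The main obstacle is not any single hard computation but making sure the bookkeeping between "$D$ on $M$" and "$\tilde D$ on $\tilde M$" is airtight — in particular, that \cref{integral manifold correspondence} is genuinely usable here (it is stated and proved earlier, so I may invoke it) and that the openness clause in the definition of \emph{maximal} is handled consistently when passing an integral manifold of $D$ to one of $\tilde D$ and back. A subtle point worth a sentence is that an integral manifold of $D$ is automatically an integral manifold of $\tilde D$ because $\tilde D$ restricts to $D$ on $M$ and $D$-integral manifolds lie in $M$; and conversely, an open submanifold of $\tilde L$ contained in $M$ is again a $D$-integral manifold. Once these identifications are made explicit, both statements are formal consequences of the boundaryless case, so the proof should be short.
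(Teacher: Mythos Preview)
Your proposal is correct and follows essentially the same approach as the paper: reduce to the boundaryless case on $\tilde M$ via the distribution $\tilde D$ and invoke \cref{integral manifold correspondence}. The only cosmetic difference is in part~(2): the paper applies the boundaryless result that \emph{all} integral manifolds of $\tilde D$ are weakly embedded in $\tilde M$ directly to $L$, whereas you route through the maximal leaf $\tilde L$ and use that $L$ is open in $\tilde L$; both arguments are valid and yield the same conclusion.
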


\begin{proof}
First note that Proposition \ref{regularity and maximality} holds when $M$ has no boundary (see Proposition 3.5.15 and Theorem 3.5.17 in \cite{Rudolph2013}). In particular, it holds for the distribution $\tilde{D}$ on $\tilde{M}$. Part 1 is now easily established with Lemma \ref{integral manifold correspondence}.

For part 2, if $L$ is an integral manifold of $D$, then $L$ is an integral manifold of $\tilde{D}$ and is thus weakly embedded in $\tilde{M}$. So, if $N$ is a manifold and $F : N \to M$ is a smooth map with $F(N)\subset L$, then we have an induced smooth map $\tilde{F} : N \to \tilde{M}$ satisfying $\tilde{F}(N) \subset L$. Thus, the restricted map $\tilde{f} : N \to L$ is smooth. However, this is also the restricted map $f : N \to L$ from $F$. Thus, $L \subset M$ is weakly embedded. 
\end{proof}

We will now define the notion of foliation. As before, let $M$ be a manifold with boundary.

\begin{definition}\label{foliationdef}
A foliation is a partition $\mathcal{F}$ of $M$ into nonempty immersed submanifolds such that for every $x \in M$, there exists a chart $(U,\varphi)$ satisfying
\begin{enumerate}
    \item $\varphi(x) = 0$ and for some $\epsilon > 0$, $\varphi(U) = (-\epsilon,\epsilon)^n \cap \mathbb{H}^n$ if $x \in \partial M$ and $\varphi(U) = (-\epsilon,\epsilon)^n$ otherwise.
    \item the submanifolds $L \in \mathcal{F}$ are invariant under flows of the local vector fields $\partial_{\varphi^1},...,\partial_{\varphi^r}$ where $r = \dim L$.
\end{enumerate}
If $\mathcal{F}$ is a foliation, the elements of $\mathcal{F}$ are called leaves of the foliation.
\end{definition}

We then obtain a bijective correspondence between distributions and foliations as follows.

\begin{proposition}
Let $M$ be a manifold with boundary. The assignment of the family of maximal integral manifolds to an integrable distribution $D$ on $M$ defines a bijection between integrable distributions on $M$ and foliations on $M$.
\end{proposition}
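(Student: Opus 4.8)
The plan is to exhibit an explicit inverse $\Psi$ to the assignment $\Phi\colon D\mapsto\{\text{maximal integral manifolds of }D\}$ and to verify that $\Phi\circ\Psi$ and $\Psi\circ\Phi$ are the identity. Define $\Psi$ on a foliation $\mathcal{F}$ by declaring $D^{\mathcal{F}}\subset TM$ to have fibre $D^{\mathcal{F}}_x = T_xL_x$, where $L_x\in\mathcal{F}$ is the unique leaf through $x$. Throughout, the boundary aspects are handled exactly as in the proof of \cref{Tangential Stefan-Sussman}, by passing to a double $\tilde{M}\supset M$; this makes available the adapted-chart statement S4 of \cref{Tangential Stefan-Sussman}, the integral-manifold correspondence of \cref{integral manifold correspondence}, and the existence and weak-embeddedness statements of \cref{regularity and maximality}.

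First I would check that $\Phi$ is well defined, i.e. that the maximal integral manifolds of an integrable $D$ form a foliation in the sense of \cref{foliationdef}. By \cref{regularity and maximality}(1) a maximal integral manifold passes through every point; if two maximal integral manifolds $L,L'$ meet, then maximality makes $L'$ an open submanifold of $L$ and $L$ an open submanifold of $L'$, so $L=L'$ as sets and hence, by the weak embeddedness in \cref{regularity and maximality}(2), as submanifolds. Thus the maximal integral manifolds partition $M$ into immersed submanifolds. The chart condition in \cref{foliationdef} is exactly what statement S4 of \cref{Tangential Stefan-Sussman} supplies: in a chart adapted to $D$ at $x$ the coordinate fields $\partial_{\varphi^1},\dots,\partial_{\varphi^r}$ are $D$-sections, and the integral curve of a $D$-section through a point of a maximal integral manifold remains in that manifold, so the leaves are invariant under the corresponding flows.

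Next I would verify that $\Psi$ lands in integrable distributions. Each $D^{\mathcal{F}}_x$ is a subspace by construction, and, given $v\in D^{\mathcal{F}}_x$, a foliation chart at $x$ together with a bump function produces a globally defined vector field tangent to $D^{\mathcal{F}}$ realising $v$ — supported in the chart, where along each leaf it is assembled from the coordinate fields $\partial_{\varphi^i}$, which are tangent to the leaves by the flow-invariance in \cref{foliationdef}. Hence $D^{\mathcal{F}}$ is a distribution. Since each $L\in\mathcal{F}$ satisfies $T_xL=D^{\mathcal{F}}_x$, every leaf is an integral manifold of $D^{\mathcal{F}}$, so $D^{\mathcal{F}}$ is integrable and, in particular, tangential.

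Finally I would compose the two maps. The identity $\Psi\circ\Phi=\mathrm{id}$ is immediate, because for an integrable $D$ the maximal integral manifold $L_x$ through $x$ has $T_xL_x=D_x$. For $\Phi\circ\Psi=\mathrm{id}$ one identifies the maximal integral manifolds of $D^{\mathcal{F}}$ with the leaves of $\mathcal{F}$: each leaf is an integral manifold of $D^{\mathcal{F}}$, and it is maximal because in an adapted chart any integral manifold meeting it lies in the same plaque and, propagating along the connected leaf, inside it; conversely any maximal integral manifold through $x$ coincides with $L_x$ by the uniqueness established above. I expect the main obstacle to be precisely this local-normal-form step — showing distinct maximal integral manifolds are disjoint and that the leaves of a foliation are exactly the maximal integral manifolds of its tangent distribution — which leans on S4 and on \cref{regularity and maximality}; the manifold-with-boundary setting introduces no genuinely new difficulty here, since those two inputs were themselves obtained by reduction to $\tilde{M}$, and one needs only to track that adapted charts and leaves meeting $\partial M$ behave as in \cref{Tangential Stefan-Sussman}.
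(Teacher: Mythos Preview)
Your proposal is correct and follows essentially the same approach as the paper: both construct the inverse $\mathcal{F}\mapsto D^{\mathcal{F}}$ with $D^{\mathcal{F}}_x=T_xL_x$, invoke \cref{Tangential Stefan-Sussman} (specifically the adapted-chart statement S4) to see that maximal integral manifolds form a foliation, and use \cref{regularity and maximality} for existence of maximal integral manifolds. The only cosmetic difference is that the paper phrases the argument as injectivity plus surjectivity of $\Phi$ while you phrase it as verifying $\Psi\circ\Phi=\mathrm{id}$ and $\Phi\circ\Psi=\mathrm{id}$; your version is slightly more explicit about why distinct maximal integral manifolds are disjoint and how global $D^{\mathcal{F}}$-sections are built, but the underlying logic is identical.
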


\begin{proof}
Clearly 
\begin{equation*}
\mathcal{D} = \{D : D \text{ is an integrable distribution on } M\}
\end{equation*}
forms a set. Moreover, for each $D \in \mathcal{D}$, there exists a unique collection $\mathcal{F}_D$ of all maximal integral manifolds of $D$. Thus, we have a function $D \mapsto \mathcal{F}_D$. Concerning the image, if $D \in \mathcal{D}$, then Theorem \ref{Tangential Stefan-Sussman} shows that $\mathcal{F}_D$ is a foliation.

For injectivity, if $D,D' \in \mathcal{D}$ and $\mathcal{F}_D = \mathcal{F}_{D'}$, then for each $x \in M$, and $L \in \mathcal{F}_D$, we have
\begin{equation*}
D_x = T_x L = D'_x
\end{equation*}
concluding injectivity.

For surjectivity, let $\mathcal{F}$ be a foliation on $M$. Then, consider the subset $D \subset TM$ where for $x \in M$, setting $L \in \mathcal{F}$ to be the unique leaf containing $x$, $D_x = T_x L$. Definition \ref{foliationdef} shows that $D$ is an integrable distribution. We now show that $\mathcal{F}_D = \mathcal{F}$. Indeed, let $L \in \mathcal{F}_D$. Then by maximality, for each $L' \in \mathcal{F}$, if $L' \cap L \neq \emptyset$, then $L' \subset L$ is an open submanifold. Because distinct leaves are disjoint, and $L$ is connected, it follows that $L \in \mathcal{F}$. Thus, $\mathcal{F}_D \subset \mathcal{F}$. Lastly, because both $\mathcal{F}_D$ and $\mathcal{F}$ partition $M$ and each of their elements are non-empty, we conclude $\mathcal{F}_D = \mathcal{F}$ and therefore surjectivity.
\end{proof}

\bibliographystyle{apsrev4-2}
\bibliography{paper.bib}

\end{document}